\newif\ifJOURNAL\global\JOURNALfalse
\newif\ifHYPER\global\HYPERtrue
\newif\ifINTERNAL\global\INTERNALfalse
\definecolor{ks-green}{rgb}{0.0,0.7,0.0}
\definecolor{ks-red}{rgb}{0.7,0.0,0.0}
\definecolor{ks-blue}{rgb}{0.0,0.0,0.7}
\newcommand{\url}[1]{\texttt{#1}}
\newtheorem{theorem}{Theorem}[section]
\newtheorem{corollary}[theorem]{Corollary}
\newtheorem{lemma}[theorem]{Lemma}
\newtheorem{proposition}[theorem]{Proposition}
\newtheorem{definition}[theorem]{Definition}
\newtheorem{Remark}[theorem]{Remark}
\newtheorem{remark}[theorem]{Remark}
\newtheorem{Example}[theorem]{Example}
\newtheorem{example}[theorem]{Example}
\newtheorem{notation}[theorem]{Notation}
\numberwithin{equation}{section}
\theoremstyle{plain}
\newtheorem{theorem}[equation]{Theorem}
\newtheorem{lemma}[equation]{Lemma}
\newtheorem{corollary}[equation]{Corollary}
\newtheorem{proposition}[equation]{Proposition}
\theoremstyle{definition}
\newtheorem{definition}[equation]{Definition}
\newtheorem{Example}[equation]{Example}
\newtheorem{Remark}[equation]{Remark}
\newenvironment{example}{\emph{Example.}}{}
\newenvironment{remark}{\emph{Remark.}}{}
\newenvironment{notation}{\emph{Notation.}}{}
\newcommand{\block}[1]{\underline{#1}}
\newcommand{\tsfrac}[2]{\textstyle{\frac{#1}{#2}}}
\newcommand{\tabstrut}{\rule[-0.5ex]{0pt}{2.8ex}}
\newcommand{\ldivs}{\!\mid_{\mkern-1mu\text{l}}\!}
\newcommand{\rdivs}{\!\mid_{\mkern-1mu\text{r}}\!}
\newcommand{\nldivs}{\!\nmid_{\mkern-1mu\text{l}}\!}
\newcommand{\Fbullet}{\mathbb{H}}
\newcommand{\atoms}{\mathbf{A}}
\def\trp{^{\!\top}}
\def\inv{^{-1}}
\def\numN{\mathbb{N}}
\def\numQ{\mathbb{Q}}
\def\numC{\mathbb{C}}
\newcommand{\ncRATS}[2]{#1^{\text{rat}}\langle\!\langle #2\rangle\!\rangle}
\newcommand{\freeALG}[2]{#1\langle #2\rangle}
\newcommand{\freeFLD}[2]{#1(\!\langle #2\rangle\!)}
\newcommand{\perm}{\Sigma}
\DeclareMathOperator{\height}{ht}
\DeclareMathOperator{\rank}{rank}
\DeclareMathOperator{\linsp}{span}
\newcommand{\field}[1]{\mathbb{#1}}
\newcommand{\als}[1]{\mathcal{#1}}
\newcommand{\ideal}[1]{\mathfrak{#1}}
\newcommand{\aclo}[1]{\overline{#1}}
\newcommand{\length}[1]{\vert #1 \vert}
\newcommand{\Fldivs}{\!\mid^{\field{F}}_{\mkern-1mu\text{l}}\!}
\newcommand{\Frdivs}{\!\mid^{\field{F}}_{\mkern-1mu\text{r}}\!}
\title{A Factorization Theory for some Free Fields}
\author{Konrad Schrempf}
\address{Konrad Schrempf\\
Faculty of Mathematics\\
University of Vienna\\
Oskar-Morgenstern-Platz~1\\
1090 Wien, Austria \\
e-mail: math@versibilitas.at}
\thanks{I thank Daniel Smertnig for the fruitful discussions about
non-commutative factorization and Michael Moßhammer for some
hints on graphs and trees and use this opportunity to
thank Sergey Berezin and Vladimir Vasilchuk
for their support in St.~Petersburg in May 2017.
I am very grateful for the constructive feedback of the anonymous
referees to increase readability,
in particular for the suggested simplification of the definition
of left/right divisibility.}
\title{A Factorization Theory\\
  for some Free Fields}
\author{Konrad Schrempf%
  \footnote{Contact: math@versibilitas.at (Konrad Schrempf),
    \url{https://orcid.org/0000-0001-8509-009X},
    Universität Wien, Fakultät für Mathematik,
    Oskar-Morgenstern-Platz~1, 1090 Wien, Austria.
    }
  \hspace{0.2em}\href{https://orcid.org/0000-0001-8509-009X}{%
  \includegraphics[height=10pt]{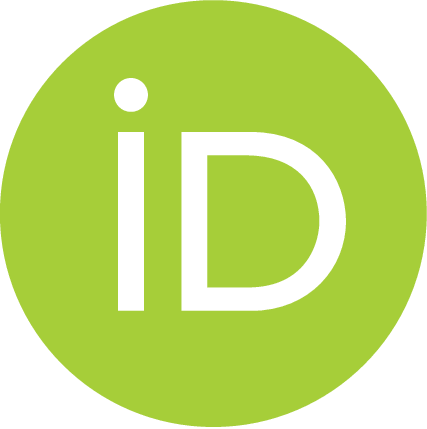}}
  }
\begin{document}

\maketitle

\begin{abstract}
Although in general there is no meaningful
concept of factorization in fields,
that in free associative algebras
(over a \emph{commutative} field)
can be extended to their respective free field
(universal field of fractions) on the level of
\emph{minimal linear representations}.
We establish a factorization theory by
providing an alternative definition of
left (and right) divisibility based
on the \emph{rank} of an element and
show that it coincides with the ``classical''
left (and right) divisibility for non-commutative
polynomials.
Additionally we present an approach to factorize elements,
in particular \emph{rational formal power series},
into their (generalized) atoms.
The problem is reduced to solving a system of
polynomial equations with \emph{commuting} unknowns.
\ifJOURNAL
\\[+2mm]
\subjclassname{ Primary 16K40, 16Z05; Secondary 16G99, 16S10}\\
{\bf Keywords}:
Free associative algebra,
factorization of non-commutative polynomials,
minimal linear representation, universal field of fractions,
admissible linear system, non-commutative formal power series
\fi
\end{abstract}

\ifJOURNAL
\else
\medskip
\emph{Keywords and 2020 Mathematics Subject Classification.}
Free associative algebra,
factorization of non-commutative polynomials,
minimal linear representation, universal field of fractions,
admissible linear system, non-commutative formal power series;
Primary 16K40, 16Z05; Secondary 16G99, 16S10
\fi

\ifJOURNAL
\section{Introduction}
\else
\section*{Introduction}
\fi

From an algebraic point of view fields are usually
not very interesting (with respect to factorization)
due to the lack of ``structure'',
for example, they do not have \emph{non-zero} \emph{non-units}.
However here, the field
---the \emph{universal field of fractions} (``free field'')
of the \emph{free associative algebra} (over a \emph{commutative}
ground field)--- is \emph{non-commutative}
and \emph{infinite dimensional} over its center
(at least if we exclude the one-variable case).
A brief introduction can be found in
\cite[Secton~9.3]{Cohn2003b}
, for details we refer to
\cite[Chapter~7]{Cohn2006a}
, where also historical information is provided:
``Until 1970 the only purely algebraic methods of embedding rings in
fields were based on Ore's method 
\cite{Ore1931a}
.''

\medskip
The main idea is to view all elements in terms of their
\emph{normal form} (minimal linear representation)
\cite{Cohn1994a}
. Given an element, the dimension of a \emph{minimal}
linear representation defines its \emph{rank}
\cite{Cohn1999a}
, for example,
the rank of a word/monomial of length $n$ is $n+1$.
Since multiplication (of two elements) can be formulated
in terms of linear representations, we establish a
concept to ``reverse'' this step, that is, given an element
(by a \emph{minimal} linear representation) to find left
(and right) divisors subject to conditions on the ranks
of the involved elements.

In \cite{Schrempf2017b9}
\ we showed that in the \emph{free associative algebra}
there is a rather natural correspondence between
a factorization of an element and (upper right) blocks of zeros
in (a special form of) its minimal linear representations.
One does not have to take care about the ranks.
In general a \emph{minimal} multiplication, that is,
a multiplication on the level of \emph{minimal} linear representations
is much more subtle.
Now, \emph{how} do we have to define divisibility
in terms of the rank
such that it is equivalent to that in the free associative algebra?

\medskip
Joining factorization theory in the non-commutative
setting ---for an overview see
\cite{Smertnig2016a}
---
and the theory of embedding ``non-commutative'' rings
into a (skew) field
---to be more precise: embedding \emph{free ideal rings} (firs)
into their respective universal field of fractions,
see \cite[Chapter~2]{Cohn2006a}
---
even for the ``simplest'' case of the \emph{free associative algebra}
results in a very rich structure,
maybe not only for a ``free factorization theory''.
Somewhat paradoxical is the fact that the \emph{inverse}
plays a crucial role. Since \emph{each} non-zero element
(in the free field) is invertible, we can use both,
its rank and that of its inverse, for example,
the inverse of a polynomial of rank $n\ge 2$ has rank $n-1$.
A corollary to the \emph{minimal} inverse (Theorem~\ref{thr:ft.mininv})
is used to identify \emph{trivial} units, that is, units from the
(commutative) ground field. We do not even have to exclude the
(commutative) one-variable case.

Factorization (of rational functions) in the latter
(on the level of \emph{realizations})
is well established in control theory
\cite{Bart2008a}
. Factorization in the non-commutative setting
is discussed in 
\cite{Kaliuzhnyi2009a}
\ and
\cite{Helton2018b}
.

\medskip
After fixing the basic notation and stating the basic definitions
in Section~\ref{sec:ft.pl}, we develop the main (technical) tools
in Section~\ref{sec:ft.ro}. In a first reading only
Proposition~\ref{pro:ft.ratop} (rational operations) and
Theorem~\ref{thr:ft.mininv} (minimal inverse) are important.
The main part is Section~\ref{sec:ft.ft}
where the factorization theory is developed,
starting with Definition~\ref{def:ft.factors}
and culminating (but not ending) in Theorem~\ref{thr:ft.ldivs}.
Finally, in Section~\ref{sec:ft.mf}, minimal multiplication
(Theorem~\ref{thr:ft.minmul}) and
factorization (Theorem~\ref{thr:ft.factorization})
is discussed.

\medskip
\begin{remark}
This exposition is not meant to serve as an introduction,
neither to free fields nor to non-commutative factorization (in free
associative algebras). Instead, depending on the background,
the example in \cite[Section~4]{Schrempf2017b9}
, the connection to formal power series
\cite[Section~3]{Schrempf2017a9}
\ or the polynomial factorization
 \cite[Section~2]{Schrempf2017b9}
\ might be helpful. One way to get acquainted with free fields
is to use them (``almost'' like the rational numbers) and explore
the rich theory in parallel. The step from inverting a non-zero
number, say in $s = \frac{v}{a}$ or $as = v$ with \emph{unique} solution $s$,
to inverting ``full'' matrices (Definition~\ref{def:ft.full})
is non-trivial but similar:
$As=v$ with \emph{unique} solution \emph{vector} $s$ (we are usually
interested in its first component $s_1$).
\end{remark}

\section{Preliminaries}\label{sec:ft.pl}

We represent elements (in free fields) by
\emph{admissible linear systems} (Definition~\ref{def:ft.als}),
which are just a special form of \emph{linear representations}
(Definition~\ref{def:ft.rep}) and ``general'' \emph{admissible systems}
\cite[Section~7.1]{Cohn2006a}
. Rational operations
(scalar multiplication, addition, multiplication, inverse) can be
easily formulated in terms of linear representations
(Proposition~\ref{pro:ft.ratop}).

\medskip

\begin{notation}
The set of the natural numbers is denoted by $\numN = \{ 1,2,\ldots \}$,
that including zero by $\numN_0$.
Zero entries in matrices are usually replaced by (lower) dots
to emphasize the structure of the non-zero entries
unless they result from transformations where there
were possibly non-zero entries before.
We denote by $I_n$ the identity matrix
and $\perm_n$ the permutation matrix that reverses the order
of rows/columns (of size $n$)
respectively $I$ and $\perm$ if the size is clear from the context.
The transpose of a vector $v$ is denoted by $v\trp$,
the unit (column) vector with a one at position~$n$ (and size depending on
the context) by $e_n = [\underbrace{0,\ldots,0}_{n-1},1,0\ldots,0]\trp$.
\end{notation}

\medskip
Let $\field{K}$ be a \emph{commutative} field,
$\aclo{\field{K}}$ its algebraic closure and
$X = \{ x_1, x_2, \ldots, x_d\}$ be a \emph{finite} (non-empty) alphabet.
$\freeALG{\field{K}}{X}$ denotes the \emph{free associative
algebra} (or \emph{free $\field{K}$-algebra})
and $\field{F} = \freeFLD{\field{K}}{X}$ its \emph{universal field of
fractions} (or ``free field'') 
\cite{Cohn1995a}
,
\cite{Cohn1999a}
. An element in $\freeALG{\field{K}}{X}$ is called (non-commutative or nc)
\emph{polynomial}.
In our examples the alphabet is usually $X=\{x,y,z\}$.
Including the algebra of \emph{nc rational series}
we have the following chain of inclusions:
\begin{displaymath}
\field{K}\subsetneq \freeALG{\field{K}}{X}
  \subsetneq \ncRATS{\field{K}}{X}
  \subsetneq \freeFLD{\field{K}}{X} =: \field{F}.
\end{displaymath}
The \emph{free monoid} $X^*$ generated by $X$
is the set of all
\emph{finite words}
$x_{i_1} x_{i_2} \cdots x_{i_n}$ with $i_k \in \{ 1,2,\ldots, d \}$.
An element of the alphabet is called \emph{letter},
one of the free monoid \emph{word}.
The multiplication on $X^*$ is the \emph{concatenation}
of words, that is,
$(x_{i_1} \cdots x_{i_m})\cdot (x_{j_1} \cdots x_{j_n})
= x_{i_1} \cdots x_{i_m} x_{j_1} \cdots x_{j_n}$,
with neutral element $1$, the \emph{empty word}.
The \emph{length} of a word $w=x_{i_1} x_{i_2} \cdots x_{i_m}$ is $m$,
denoted by $\length{w} = m$ or $\ell(w) = m$.
For detailed introductions see
\cite[Chapter~1]{Berstel2011a}
\ or
\cite[Section~I.1]{Salomaa1978a}
.

\begin{remark}
For an overview about the many connections to formal language
theory, automata, nc rational series and recognizability
we recommend \cite{Reutenauer2008a}
. \emph{Recognizable} series are defined via ``regular''
\emph{linear representations}
\cite{Berstel2011a}
, a special case of those introduced later
in Definition~\ref{def:ft.rep}.
A summary is available in 
\cite[Section~3]{Schrempf2017a9}
.
\end{remark}

\begin{definition}[Inner Rank, Full Matrix
\protect{\cite[Section~0.1]{Cohn2006a}
}, \cite{Cohn1999a}
]\label{def:ft.full}
Given a matrix $A \in \freeALG{\field{K}}{X}^{n \times n}$, the \emph{inner rank}
of $A$ is the smallest number $m\in \numN$
such that there exists a factorization
$A = T U$ with $T \in \freeALG{\field{K}}{X}^{n \times m}$ and
$U \in \freeALG{\field{K}}{X}^{m \times n}$.
The matrix $A$ is called \emph{full} if $m = n$,
\emph{non-full} otherwise.
\end{definition}

\begin{remark}
Every full matrix (over the free associative algebra) is invertible
over the free field
\cite[Corollary~7.5.14]{Cohn2006a}
.
\end{remark}

\begin{definition}[Linear Representations, Dimension, Rank
\cite{Cohn1994a,Cohn1999a}
]\label{def:ft.rep}
Let $f \in \field{F}$.
A \emph{linear representation} of $f$ is a triple $\pi_f = (u,A,v)$ with
$u \in \field{K}^{1 \times n}$, full
$A = A_0 \otimes 1 + A_1 \otimes x_1 + \ldots
+ A_d \otimes x_d$, that is, $A$ is invertible over $\field{F}$,
$A_\ell \in \field{K}^{n\times n}$,
$v \in \field{K}^{n\times 1}$ 
and $f = u A\inv v$.
The \emph{dimension} of $\pi_f$ is $\dim \, (u,A,v) = n$.
It is called \emph{minimal} if $A$ has the smallest possible dimension
among all linear representations of $f$.
The ``empty'' representation $\pi = (,,)$ is
the minimal one of $0 \in \field{F}$ with $\dim \pi = 0$.
Let $f \in \field{F}$ and $\pi$ be a \emph{minimal}
linear representation of $f$.
Then the \emph{rank} of $f$ is
defined as $\rank f = \dim \pi$.
\end{definition}

\begin{remark}
Cohn and Reutenauer define linear representations
slightly more general, namely $f = c + u A\inv v$ with
possibly non-zero $c \in \field{K}$ and call it
\emph{pure} when $c=0$. Two linear representations are called
\emph{equivalent} if they represent the same element
\cite{Cohn1999a}
. Two (pure) linear representations $(u,A,v)$ and $(\tilde{u},\tilde{A},\tilde{v})$
of dimension $n$ are called \emph{isomorphic} if there exist invertible matrices
$P,Q \in \field{K}^{n \times n}$
such that $u = \tilde{u}Q$, $A = P \tilde{A} Q$ and $v=P\tilde{v}$
\cite{Cohn1999a}
.
\end{remark}

\begin{theorem}[%
\protect{\cite[Theorem 1.4]{Cohn1999a}
}]\label{thr:ft.cohn99.14}
If $\pi' = (u',A',v')$ and $\pi''=(u'',A'',v'')$ are equivalent
(pure) linear representations, of which the first is minimal,
then the second is isomorphic to a representation $\pi = (u,A,v)$
which has the block decomposition
\begin{displaymath}
u =
\begin{bmatrix}
. & u' & * 
\end{bmatrix},\quad
A =
\begin{bmatrix}
* & * & * \\
. & A' & * \\
. & . & * 
\end{bmatrix}
\quad\text{and}\quad
v = 
\begin{bmatrix}
* \\ v' \\ .
\end{bmatrix}.
\end{displaymath}
\end{theorem}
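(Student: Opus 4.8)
The plan is to produce, by a $\field{K}$-isomorphism of linear representations acting only on $\pi''$, a representation in the stated block-triangular form whose central diagonal block is the given minimal $\pi'$. Write $f := u'(A')\inv v' = u''(A'')\inv v''$ and put $r = \dim\pi' \le n = \dim\pi''$. The lever is minimality of $\pi'$, which I would first record in a usable form: a pure representation $(u,A,v)$ of dimension $N$ is minimal if and only if the components of $s := A\inv v$ are linearly independent over $\field{K}$ \emph{and} the components of $t := u A\inv$ are linearly independent over $\field{K}$ (both directions reduce, via Lemma~\ref{lem:ft.cohn95.636}, to the impossibility of splitting off a one-dimensional block). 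Only the ``no reduction is possible'' direction is needed below.

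First I would encode the hypothesis as a single representation of $0$, namely $\pi_0 = (\tilde u, \tilde A, \tilde v)$ of dimension $r+n$ with $\tilde u = [\,u'\ \;{-}u''\,]$, $\tilde A = A'\oplus A''$ and $\tilde v = \left[\begin{smallmatrix} v'\\ v''\end{smallmatrix}\right]$. Since $\tilde A$ is invertible over $\field{F}$, the Schur complement of $\tilde A$ in the linear $(r+n+1)$-square matrix $M = \left[\begin{smallmatrix} \tilde A & \tilde v\\ \tilde u & 0\end{smallmatrix}\right]$ equals $-\tilde u\,\tilde A\inv \tilde v = -\bigl(u'(A')\inv v' - u''(A'')\inv v''\bigr) = 0$, so $M$ is non-full. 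By Lemma~\ref{lem:ft.cohn95.636} it is associated over $\field{K}$ to a hollow matrix; the resulting zero block is exactly the separation I want, and from it I would read off that $\pi_0$ is isomorphic over $\field{K}$ to a triangular representation of $0$ in which the support of $v$ and the support of $u$ are disjoint and lie on opposite sides of the block-triangular $A$. That such a triple represents $0$ is immediate by solving the triangular system.

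The heart of the argument is to transport this separation from the joint system $\pi_0$ back to $\pi''$ while keeping $(u',A',v')$ verbatim. Here minimality of $\pi'$ is decisive: since the components of $s'=(A')\inv v'$ and of $t'=u'(A')\inv$ are $\field{K}$-independent, no $\field{K}$-combination of the $A'$-rows or $A'$-columns can be annihilated, so the transformation trivialising $\pi_0$ may be chosen to fix the $A'$-block and to act on the remaining coordinates only by an element of $\mathrm{GL}_n(\field{K})$ together with shears adding $\pi'$-coordinates into the $\pi''$-coordinates. Restricting its action to the $\pi''$-part gives invertible $P,Q\in\field{K}^{n\times n}$ and the three-block partition: the middle diagonal block is $A'$; the first block collects those $\pi''$-directions that persist in $v$ but whose coupling to $u$ has been cancelled (entry $.$ in $u=[\,.\ \;u'\ \;*\,]$, entry $*$ in $v$); the third block collects the directions cancelled from $v$ (entry $.$ in $v$, entry $*$ in $u$). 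I would finish by checking the pattern directly on the transformed system: block-triangularity forces the bottom solution block to vanish and the central block to equal $s'$, and dually the top left-solution block vanishes with central block $t'$, so the output is $u's' = f$, as required.

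The step I expect to be the main obstacle is precisely this transfer --- converting the hollowness of $M$ into a $\field{K}$-isomorphism of $\pi''$ alone that preserves $A'$ literally and realises the exact zero entries of $u$ and $v$. The difficulty is that the $\field{K}$-transformation delivered by Lemma~\ref{lem:ft.cohn95.636} mixes the $\pi'$- and $\pi''$-coordinates freely, whereas the statement demands the $A'$-block be left untouched; reconciling the two requires the independence conditions from minimality to guarantee that the linear equations defining the admissible shears and the matrices $P,Q$ are solvable, together with a check that every reducing transformation is scalar, i.e.\ lies over $\field{K}$ rather than over $\freeALG{\field{K}}{X}$ --- which is where the affine (degree $\le 1$) shape of $\tilde A$, $M$ and all intermediate matrices is used.
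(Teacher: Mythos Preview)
The paper does not prove this statement: Theorem~\ref{thr:ft.cohn99.14} is quoted from \cite[Theorem~1.4]{Cohn1999a} and used as a black box (for instance in the proofs of Lemma~\ref{lem:ft.special} and Lemma~\ref{lem:ft.factorization}), with no argument supplied. There is therefore no in-paper proof to compare your proposal against.

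For what it is worth, your strategy is essentially the one Cohn and Reutenauer themselves use in the cited reference. They first show (their Lemma~1.3) that any pure representation of $0$ is isomorphic to one in which the supports of $u$ and $v$ are disjoint, by precisely the non-fullness/hollowness argument you sketch for the bordered matrix $M$; then they apply this to the direct sum $\pi'\oplus(-\pi'')$ and invoke minimality of $\pi'$ to force the surviving central block to have size $r$ and to coincide with $A'$ after a further isomorphism. Your identification of the main obstacle --- passing from a transformation that mixes the $\pi'$- and $\pi''$-coordinates freely to one that fixes $A'$ literally --- is accurate, and it is exactly the place where Cohn and Reutenauer spend their effort, using the $\field{K}$-linear independence of the left and right families of $\pi'$ (what appears here as Proposition~\ref{pro:ft.cohn94.47}) to solve the relevant linear systems. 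Your sketch stops short of carrying that step out, but the plan is sound and matches the original.
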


\begin{definition}[Left and Right Families
\cite{Cohn1994a}
]\label{def:ft.family}
Let $\pi=(u,A,v)$ be a linear representation of $f \in \field{F}$
of dimension $n$.
The families $( s_1, s_2, \ldots, s_n )\subseteq \field{F}$
with $s_i = (A\inv v)_i$
and $( t_1, t_2, \ldots, t_n )\subseteq \field{F}$
with $t_j = (u A\inv)_j$
are called \emph{left family} and \emph{right family} respectively.
$L(\pi) = \linsp \{ s_1, s_2, \ldots, s_n \}$ and
$R(\pi) = \linsp \{ t_1, t_2, \ldots, t_n \}$
denote their linear spans (over $\field{K}$).
\end{definition}

\begin{proposition}[%
\protect{\cite[Proposition~4.7]{Cohn1994a}
}]\label{pro:ft.cohn94.47}
A representation $\pi=(u,A,v)$ of an element $f \in \field{F}$
is minimal if and only if both, the left family
and the right family are $\field{K}$-linearly independent.
In this case, $L(\pi)$ and $R(\pi)$ depend only on $f$.
\end{proposition}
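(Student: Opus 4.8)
The plan is to prove the two implications of the equivalence separately, then the invariance statement, using Theorem~\ref{thr:ft.cohn99.14} as the main lever together with the way the two families transform under a change of basis.

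First I would record that transformation law. If $P,Q\in\field{K}^{n\times n}$ are invertible, then $(u,A,v)$ and $(uQ,PAQ,Pv)$ both represent $f$ (the matrix $PAQ$ is again linear and full), and one computes that the left family $s=A\inv v$ becomes $Q\inv s$ while the right family $t=uA\inv$ becomes $tP\inv$. Since $P,Q$ are invertible \emph{over $\field{K}$}, this shows that $\field{K}$-linear (in)dependence of each family, and likewise the spans $L(\pi)$ and $R(\pi)$, are invariant under isomorphism of representations; so every claim may be checked after such a change of basis. Next I would prove ``both families independent $\Rightarrow$ minimal'' contrapositively. If $\pi$ is not minimal, choose a minimal representation $\pi^0=(u^0,A^0,v^0)$ of $f$ of dimension $m=\rank f<n$. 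By Theorem~\ref{thr:ft.cohn99.14} (with $\pi'=\pi^0$, $\pi''=\pi$), $\pi$ is isomorphic to a representation in the displayed block form, with $A^0$ in the central block and outer blocks of sizes $p,q$ with $p+q=n-m>0$. As the strictly lower blocks of $A$ vanish, $A\inv$ is block upper triangular; since $v$ has a zero bottom block of size $q$, the bottom block of $s=A\inv v$ vanishes, and since $u$ has a zero top block of size $p$, the top block of $t=uA\inv$ vanishes. Because $p+q>0$, at least one of these blocks is nonempty, so at least one family contains a zero vector and is $\field{K}$-linearly dependent.

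For the converse, ``minimal $\Rightarrow$ both families independent'', I would again argue contrapositively and construct a strictly smaller representation whenever a family is dependent; the right family is treated symmetrically, so it suffices to handle the left one. Using the transformation law, choose $Q$ so that $s=A\inv v$ is carried to $Q\inv s=\bigl[\begin{smallmatrix}\hat s\\ 0\end{smallmatrix}\bigr]$, where the $\ell=\dim L(\pi)<n$ entries of $\hat s$ form a $\field{K}$-basis of $L(\pi)$. Then in $As=v$ the zero tail of $s$ kills all but the first $\ell$ columns, so writing $A=\bigl[\begin{smallmatrix}A_{11}&A_{12}\\ A_{21}&A_{22}\end{smallmatrix}\bigr]$, $u=[\,u_1\ u_2\,]$, $v=\bigl[\begin{smallmatrix}v_1\\ v_2\end{smallmatrix}\bigr]$ one obtains $A_{11}\hat s=v_1$ and $f=u_1\hat s$, i.e.\ the candidate $\ell$-dimensional representation $(u_1,A_{11},v_1)$.

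I expect this reduction step to be the main obstacle: a representation requires $A_{11}$ to be \emph{full} (invertible over $\field{F}$), and the construction is only legitimate once this is secured. The point is that changing $P$ (still unused after $Q$ was fixed) varies which $\field{K}$-combination of rows is retained without disturbing the zero tail of $Q\inv s$, and I would use this freedom together with Lemma~\ref{lem:ft.cohn95.636} to force $A_{11}$ full: were no choice of $P$ to make this block full, the corresponding linear matrix would be associated over $\field{K}$ to a hollow, hence non-full, matrix, which would either contradict the $\field{K}$-linear independence of the entries of $\hat s$ or permit a further reduction of the dimension. Granting fullness, $(u_1,A_{11},v_1)$ is a representation of $f$ of dimension $\ell<n$, so $\pi$ is not minimal. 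Finally, for the invariance I would take two minimal representations $\pi_1,\pi_2$ of $f$: they are equivalent and of equal dimension $\rank f$, so in Theorem~\ref{thr:ft.cohn99.14} the outer blocks must be empty, forcing $\pi_2$ to be isomorphic to $\pi_1$ over $\field{K}$; by the transformation law of the second paragraph the spans are preserved, whence $L(\pi_1)=L(\pi_2)$ and $R(\pi_1)=R(\pi_2)$, i.e.\ $L$ and $R$ depend only on $f$.
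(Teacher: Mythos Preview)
The paper does not give its own proof of this proposition; it is cited from \cite{Cohn1994a} and stated without argument. There is therefore nothing in the paper to compare your attempt against.

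Assessing your proposal on its own terms: the transformation law $s\mapsto Q^{-1}s$, $t\mapsto tP^{-1}$ and the invariance statement in your last paragraph are correct, as is the direction ``not minimal $\Rightarrow$ some family dependent'' via the block form of Theorem~\ref{thr:ft.cohn99.14}. One caution: Theorem~\ref{thr:ft.cohn99.14} is taken from the later paper \cite{Cohn1999a}, so you should verify that its proof there does not already rest on the present proposition from \cite{Cohn1994a}; otherwise the argument is circular. The substantive gap is the one you flag yourself: in the step ``left family dependent $\Rightarrow$ not minimal'', fullness of the top-left $\ell\times\ell$ block $A_{11}$ is asserted but not established. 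Your appeal to Lemma~\ref{lem:ft.cohn95.636} together with the freedom in $P$ is only a heuristic; you have neither shown that the first $\ell$ columns of $AQ$ have inner rank $\ell$, nor that this would suffice to extract a full square block by $\field{K}$-row operations on a \emph{linear} matrix. That is where the real work lies, and the sketch as written does not carry it.
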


\begin{definition}[Element Types]
An element $f \in \field{F}$ is called \emph{of type} $(1,*)$
(respectively $(0,*)$) if $1 \in R(f)$, that is, $1 \in R(\pi)$
for some \emph{minimal} linear representation $\pi$ of $f$,
(respectively $1 \notin R(f)$).
It is called \emph{of type} $(*,1)$ (respectively $(*,0)$)
if $1 \in L(f)$ (respectively $1 \notin L(f)$).
Both subtypes can be combined.
\end{definition}

\begin{definition}[Admissible Linear Systems, Admissible Transformations
\cite{Schrempf2017a9}
]\label{def:ft.als}
A linear representation $\als{A} = (u,A,v)$ of $f \in \field{F}$
is called \emph{admissible linear system} (ALS) for $f$,
written also as $A s = v$,
if $u=e_1=[1,0,\ldots,0]$.
The element $f$ is then the first component
of the (unique) solution vector $s$.
Given a linear representation $\als{A} = (u,A,v)$
of dimension $n$ of $f \in \field{F}$
and invertible matrices $P,Q \in \field{K}^{n\times n}$,
the transformed $P\als{A}Q = (uQ, PAQ, Pv)$ is
again a linear representation (of $f$).
If $\als{A}$ is an ALS,
the transformation $(P,Q)$ is called
\emph{admissible} if the first row of $Q$ is $e_1 = [1,0,\ldots,0]$.
\end{definition}

\begin{remark}
The left family $(A\inv v)_i$ (respectively the right family $(u A\inv)_j$)
and the solution vector $s$ of $As = v$ (respectively $t$ of $u = tA$)
are used synonymously.
\end{remark}

Transformations can be done by elementary row- and column operations,
explained in detail in
\cite[Remark~1.12]{Schrempf2017a9}
. For further remarks and connections to the related concepts
of linearization and realization see
\cite[Section~1]{Schrempf2017a9}
. 

For elements in the free associative algebra $\freeALG{\field{K}}{X}$
a special form (with an upper unitriangular system matrix) can be used.
It plays a crucial role in the factorization of polynomials
because it allows to formulate a minimal polynomial multiplication
(Proposition~\ref{pro:ft.minmul}) and \emph{upper unitriangular}
transformation matrices (invertible by definition) suffice to find
all possible factors (up to trivial units).
For details we refer to 
\cite[Section~2]{Schrempf2017b9}
.

\begin{remark}
The following definition is slightly adapted to avoid confusion with
other transformation matrices for the factorization,
formulated \emph{independent} of a given admissible linear system.
\end{remark}

\begin{definition}[Polynomial ALS and Transformation
\protect{\cite[Definition~24]{Schrempf2017b9}
}]\label{def:ft.psals}
An ALS $\als{A} = (u,A,v)$ of dimension $n$
with system matrix $A = (a_{ij})$
for a non-zero polynomial $0 \neq p \in \freeALG{\field{K}}{X}$ 
is called
\emph{polynomial}, if
\begin{itemize}
\item[(1)] $v = [0,\ldots,0,\lambda]\trp$ for some $\lambda \in\field{K}$ and
\item[(2)] $a_{ii}=1$ for $i=1,2,\ldots, n$ and $a_{ij}=0$ for $i>j$,
  that is, $A$ is upper triangular.
\end{itemize}
A polynomial ALS is also written as $\als{A} = (1,A,\lambda)$
with $1,\lambda \in \field{K}$.
An admissible transformation $(P,Q)$ for an ALS $\als{A}$
is called \emph{polynomial} if it has the form
\begin{displaymath}
(P,Q) = \left(
\begin{bmatrix}
1 & \alpha_{1,2} & \ldots & \alpha_{1,n-1} & \alpha_{1,n} \\
  & \ddots & \ddots & \vdots & \vdots \\
  &   & 1 & \alpha_{n-2,n-1} & \alpha_{n-2,n} \\
  &   &   & 1 & \alpha_{n-1,n} \\
  &   &   &   & 1
\end{bmatrix},
\begin{bmatrix}
1 & 0 & 0 & \ldots & 0 \\
  & 1 & \beta_{2,3} & \ldots & \beta_{2,n} \\
  &   & 1 & \ddots & \vdots  \\
  &   &   & \ddots & \beta_{n-1,n} \\
  &   &   &   & 1 \\
\end{bmatrix}
\right).
\end{displaymath}
If additionally $\alpha_{1,n} = \alpha_{2,n} = \ldots = \alpha_{n-1,n} = 0$
then $(P,Q)$ is called \emph{polynomial factorization transformation}.
\end{definition}

\begin{definition}[Similar Right Ideals, Similar Elements
\protect{\cite[Section~3.1]{Cohn2006a}
}]
Let $R$ be a ring. 
Two right ideals $\ideal{a},\ideal{b} \subseteq R$ are called
\emph{similar}, written as $\ideal{a} \sim \ideal{b}$,
if $R/\ideal{a} \cong R/\ideal{b}$ as right $R$-modules.
Two elements $p,q\in R$ are called \emph{similar} if
their right ideals $pR$ and $qR$ are similar,
that is, $pR \sim qR$.
See also 
\cite[Section~4.1]{Smertnig2016a}
.
\end{definition}

\begin{definition}[Left and Right Coprime Elements
\protect{\cite[Section~2]{Baeth2015a}
}]\label{def:ft.lrcop}
Let $R$ be a domain and $H = R^\bullet = R \setminus \{ 0 \}$.
An element $p$ \emph{left divides} $q$,
written as $p \ldivs q$, if $q \in pH = \{ ph \mid h \in H \}$.
Two elements $p,q$ are called \emph{left coprime}
if for all $h$ such that $h\ldivs p$ and $h\ldivs q$
implies $h \in H^\times = \{ f \in H \mid f \text{ is invertible} \}$,
that is, $h$ is an element of the \emph{group of units}.
Right division $p \rdivs q$ and the notion of
\emph{right coprime} is defined in a similar way.
Two elements are called \emph{coprime}
if they are left and right coprime.
\end{definition}

\begin{definition}[Atomic Domains
\protect{\cite[Section~2]{Baeth2015a}
}]\label{def:ft.atoms}
Let $R$ be a domain and $H = R^\bullet$.
An element $p\in H \setminus H^\times$, that is,
a non-zero non-unit (in $R$), is called an \emph{atom}
(or \emph{irreducible})
if $p = q_1 q_2$ with $q_1,q_2 \in H$ implies that
either $q_1 \in H^\times$ or $q_2 \in H^\times$.
The set of atoms in $R$ is denoted by
$\atoms(R)$.
The (cancellative) monoid $H$ is called \emph{atomic}
if every non-unit can be written as a finite product of atoms of $H$.
The domain $R$ is called \emph{atomic} if the monoid $R^\bullet$ is atomic.
\end{definition}

\begin{definition}[Similarity Unique Factorization Domains
\cite{Smertnig2016a}
]\label{def:ft.sfd}
A domain $R$ is called \emph{similarity factorial}
(or a \emph{similarity-UFD}) if
$R$ is atomic and it satisfies the property that
if $p_1 p_2 \cdots p_m = q_1 q_2 \cdots q_n$ for atoms
(irreducible elements)
$p_i,q_j \in R$,
then $m=n$ and there exists a permutation $\sigma \in \mathfrak{S}_m$
such that $p_i$ is similar to $q_{\sigma(i)}$ for all $i\in 1,2, \ldots, m$.
\end{definition}

\begin{proposition}[%
\protect{\cite[Theorem~6.3]{Cohn1963b}
}]\label{pro:ft.cohn63b}
The free associative algebra $R = \freeALG{\field{K}}{X}$
is a similarity (unique) factorization domain.
\end{proposition}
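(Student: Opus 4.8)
The plan is to verify the two clauses of Definition~\ref{def:ft.sfd} in turn: first that $R = \freeALG{\field{K}}{X}$ is atomic, and then that any two atomic factorizations of a given element agree up to permutation and similarity. Throughout I would lean on the standard degree filtration of $R$, for which $\degree(pq) = \degree(p) + \degree(q)$ holds (since $R$ is a domain, leading terms multiply) and for which the nonzero degree-zero elements are exactly the units: if $uv = 1$ then $\degree(u) + \degree(v) = 0$, forcing both to be nonzero constants, so $H^\times = \field{K}^\times$.

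For atomicity, a nonzero non-unit $p$ has $\degree(p) \ge 1$, and in any factorization $p = ab$ into non-units both $\degree(a), \degree(b) \ge 1$ with $\degree(a) + \degree(b) = \degree(p)$. A descending induction on the degree then shows that repeatedly splitting off non-unit factors must terminate, producing a finite factorization of $p$ into atoms (and incidentally bounding the number of factors by $\degree(p)$). This settles the atomic part of Definition~\ref{def:ft.sfd}.

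For uniqueness I would translate the problem into the module-theoretic language in which similarity is defined. A factorization $p = p_1 p_2 \cdots p_m$ yields a chain of principal right ideals
\[
R \supseteq p_1 R \supseteq p_1 p_2 R \supseteq \cdots \supseteq p_1 \cdots p_m R = pR,
\]
and left multiplication by $p_1 \cdots p_{i-1}$ (injective since $R$ is a domain) identifies the $i$-th successive quotient $p_1 \cdots p_{i-1} R / p_1 \cdots p_i R$ with $R / p_i R$ as right $R$-modules. The element $p_i$ is an atom precisely when the interval $[p_i R, R]$ in the lattice of principal right ideals has length one, and two atoms $p_i, q_j$ are similar exactly when $R/p_i R \cong R/q_j R$. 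Thus an atomic factorization of $p$ is the same datum as a maximal chain in the lattice $[pR, R]$ of principal right ideals, with the similarity class of each atom recorded by the quotient module at the corresponding covering step.

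The uniqueness statement then becomes a Jordan--Hölder assertion for these maximal chains, and the main obstacle is to supply the structural input that makes Jordan--Hölder applicable. The crucial point is that $R = \freeALG{\field{K}}{X}$ satisfies Cohn's weak algorithm with respect to the degree filtration and is therefore a (two-sided) free ideal ring; in particular it is a $2$-fir, which is what forces the lattice $[pR, R]$ of principal right ideals to be modular. On a modular lattice the Schreier refinement theorem holds, so any two maximal chains have the same length $m = n$ and there is a permutation $\sigma \in \mathfrak{S}_m$ matching their covering intervals up to projectivity; projective intervals give isomorphic quotient modules, i.e.\ similar atoms $p_i \sim q_{\sigma(i)}$. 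Verifying the weak algorithm (a leading-term argument internal to the free algebra) and deducing modularity of the ideal lattice is the technical heart of the proof—it is precisely this fir machinery that does the real work, whereas the atomicity bound and the lattice translation above are formal.
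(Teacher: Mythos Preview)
The paper does not actually give a proof of this proposition: it is stated with a citation to \cite[Theorem~6.3]{Cohn1963b} and used as a black box. So there is no ``paper's own proof'' to compare against; the result is imported wholesale from Cohn.

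That said, your sketch is a faithful outline of the classical argument. The atomicity step via the degree function is correct and routine. The translation of a factorization into a maximal chain in $[pR,R]$ with successive quotients $R/p_iR$ is exactly the right move, and identifying similarity of atoms with isomorphism of these quotient modules is the point of Definition~\ref{def:ft.sfd}. The substantive input you name---weak algorithm $\Rightarrow$ $2$-fir $\Rightarrow$ the principal right ideals between $pR$ and $R$ form a modular sublattice $\Rightarrow$ Schreier refinement / Jordan--H\"older---is precisely Cohn's route. One point worth making explicit if you write this out in full: in a $2$-fir the sum and intersection of two principal right ideals are again principal (or zero), so the principal right ideals containing $pR$ really do form a \emph{sublattice} of the lattice of all right ideals, and modularity is then inherited from the ambient submodule lattice. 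With that clarified, the Schreier argument goes through and yields the permutation $\sigma$ and the projectivity of corresponding intervals, hence similarity of the matched atoms.
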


\section{Rational Operations}\label{sec:ft.ro}

Usually we want to construct \emph{minimal} admissible linear
systems (out of minimal ones), that is,
perform ``minimal'' rational operations.
Minimal scalar multiplication is trivial.
In some special cases \emph{minimal addition}
can be formulated (Proposition~\ref{pro:ft.djminadd}).
For \emph{minimal multiplication} we refer to Section~\ref{sec:ft.mf}.
For the \emph{minimal inverse} we have to distinguish four
cases, which are summarized in Theorem~\ref{thr:ft.mininv}.
In general however, it is necessary to \emph{minimize} a given
system. For a \emph{polynomial} ALS this is discussed in
\cite[Section~2.2]{Schrempf2017b9}
, for the general case we refer to
\cite{Schrempf2018a}
.

\begin{proposition}[Rational Operations
\cite{Cohn1999a}
]\label{pro:ft.ratop}
Let $0\neq f,g \in \field{F}$ be given by the
admissible linear systems $\als{A}_f = (u_f, A_f, v_f)$
and $\als{A}_g = (u_g, A_g, v_g)$ respectively
and let $0\neq \mu \in \field{K}$.
Then admissible linear systems for the rational operations
can be obtained as follows:

\smallskip\noindent
The scalar multiplication
$\mu f$ is given by
\begin{displaymath}
\mu \als{A}_f =
\bigl( u_f, A_f, \mu v_f \bigr).
\end{displaymath}
The sum $f + g$ is given by
\begin{displaymath}
\als{A}_f + \als{A}_g =
\left(
\begin{bmatrix}
u_f & . 
\end{bmatrix},
\begin{bmatrix}
A_f & -A_f u_f\trp u_g \\
. & A_g
\end{bmatrix}, 
\begin{bmatrix} v_f \\ v_g \end{bmatrix}
\right).
\end{displaymath}
The product $fg$ is given by
\begin{displaymath}
\als{A}_f \cdot \als{A}_g =
\left(
\begin{bmatrix}
u_f & . 
\end{bmatrix},
\begin{bmatrix}
A_f & -v_f u_g \\
. & A_g
\end{bmatrix},
\begin{bmatrix}
. \\ v_g
\end{bmatrix}
\right).
\end{displaymath}
And the inverse $f\inv$ is given by
\begin{displaymath}
\als{A}_f\inv =
\left(
\begin{bmatrix}
1 & . 
\end{bmatrix},
\begin{bmatrix}
-v_f & A_f \\
. & u_f
\end{bmatrix},
\begin{bmatrix}
. \\ 1
\end{bmatrix}
\right).
\end{displaymath}
\end{proposition}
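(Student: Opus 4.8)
The statement collects the standard matrix constructions for rational operations, so the plan is a direct verification: for each of the four triples I would check that it is again an admissible linear system (its left vector is $e_1$ and its system matrix is linear in the letters), that this matrix is full, i.e.\ invertible over $\field{F}$, and that the represented element $uA\inv v$ equals the asserted result. The one recurring computational device I would use is that a block upper triangular matrix $\bigl[\begin{smallmatrix} P & Q \\ . & R \end{smallmatrix}\bigr]$ with $P,R$ invertible over $\field{F}$ is itself invertible, with inverse $\bigl[\begin{smallmatrix} P\inv & -P\inv Q R\inv \\ . & R\inv \end{smallmatrix}\bigr]$; this single identity certifies fullness and lets me read off $uA\inv v$ by one short multiplication. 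Since $A_f$ and $A_g$ are full by the definition of a linear representation, it applies verbatim in the scalar, sum, and product cases, and linearity of the new matrices is clear because every off-diagonal block introduced is either constant or $A_f$ times a constant matrix.

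The scalar case is immediate from $u_f A_f\inv(\mu v_f) = \mu f$. For the sum, the formula above makes the upper right block of the inverse equal to $u_f\trp u_g A_g\inv$, so $[\,u_f\;\;.\,]A\inv = [\,u_f A_f\inv \;\; u_f u_f\trp u_g A_g\inv\,]$; here I would invoke admissibility, $u_f = e_1$, which gives the scalar $u_f u_f\trp = 1$ and collapses the second block to $u_g A_g\inv$, so that pairing with $[v_f;v_g]$ yields $u_f A_f\inv v_f + u_g A_g\inv v_g = f+g$. This is precisely the step that uses the ALS hypothesis rather than a general linear representation. The product is analogous: the upper right block of the inverse is $A_f\inv v_f u_g A_g\inv$, and sandwiching it between $[\,u_f\;\;.\,]$ and $[\,.\,;v_g]$ leaves $(u_f A_f\inv v_f)(u_g A_g\inv v_g) = f\,g$ by associativity. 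In all three cases the left vector is visibly $e_1$, so each triple is an ALS.

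The inverse is the only case requiring the hypothesis $f\neq 0$, and it is where fullness is not automatic, so I expect it to be the main obstacle. Here I would work directly with the solution vector. For $A = \bigl[\begin{smallmatrix} -v_f & A_f \\ . & u_f \end{smallmatrix}\bigr]$ and $s = [s_1;s']$, the equation $As = [\,.\,;1]$ splits into $A_f s' = v_f s_1$ and $u_f s' = 1$; substituting $s' = A_f\inv v_f s_1$ into the second gives $f s_1 = 1$, hence $s_1 = f\inv$ is the first component, as claimed. To certify fullness I would check that the kernel of $A$ over $\field{F}$ is trivial: from $As=0$ the same elimination forces $f s_1 = 0$, and since $f\neq 0$ in the field $\field{F}$ this gives $s_1 = 0$ and then $s'=0$; a square matrix over the skew field $\field{F}$ with trivial kernel is invertible, which is what fullness means here. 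Finally the left vector is $[\,1\;\;.\,] = e_1$, so the triple is an ALS representing $f\inv$, which completes the verification.
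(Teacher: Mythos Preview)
Your verification is correct and is the standard direct argument; the paper itself does not prove this proposition but simply cites it from Cohn--Reutenauer, so there is no ``paper's proof'' to compare against. One small remark: in the inverse case your kernel argument over $\field{F}$ indeed yields invertibility over $\field{F}$, and since the matrix is linear this is equivalent to fullness by the remark after Definition~\ref{def:ft.full} (full $\Leftrightarrow$ invertible over the free field), so the logic is sound.
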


\begin{definition}[Disjoint Elements
\cite{Cohn1999a}
]\label{def:ft.disjoint}
Two elements $f,g \in \field{F}$ are called
\emph{disjoint} if $\rank(f+g) = \rank(f) + \rank(g)$.
\end{definition}

\begin{remark}
Two polynomials are never disjoint.
This can be easily seen in the construction
(of an ALS) for the sum (of the polynomials).
See also \cite[Theorem~2.3]{Cohn1999a}
.
\end{remark}

For disjoint elements the formulation of a minimal addition
(Proposition~\ref{pro:ft.djminadd}) is immediate.
Testing if two elements are disjoint in $\field{F}$ is difficult
because it relies on techniques for minimizing linear representations
\cite{Schrempf2018a}
.
However, since minimality of a linear representation
is equivalent to $\field{K}$-linear independence of
its left and right family respectively (Proposition~\ref{pro:ft.cohn94.47}),
two elements $f,g$ are disjoint if
$L(f) \cap L(g) = \{ 0 \}$ and $R(f) \cap R(g) = \{ 0 \}$.

\begin{example}
For $f = x + \bigl( (1-x)\inv + x\inv \bigr)$
a \emph{minimal} ALS
---constructed by Proposition~\ref{pro:ft.djminadd}--- is
\begin{displaymath}
\begin{bmatrix}
1 & -x & -1 & . \\
. & 1 & . & . \\
. & . & 1-x & x+1 \\
. & . & . & x
\end{bmatrix}
s =
\begin{bmatrix}
. \\ 1 \\ 1 \\ 1
\end{bmatrix},\quad
s =
\begin{bmatrix}
f \\
1 \\
(1+x)\inv + x\inv \\
x\inv
\end{bmatrix}.
\end{displaymath}
\end{example}

\begin{proposition}[Minimal Disjoint Addition]\label{pro:ft.djminadd}
Let $f,g \in \field{F}$ be \emph{disjoint} and
given by the \emph{minimal}
admissible linear systems $\als{A}_f = (u_f, A_f, v_f)$
and $\als{A}_g = (u_g, A_g, v_g)$ of dimension $n_f$ and $n_g$
respectively.
Then the system
\begin{displaymath}
\als{A}_f + \als{A}_g =
\left(
\begin{bmatrix}
u_f & . 
\end{bmatrix},
\begin{bmatrix}
A_f & -A_f u_f\trp u_g \\
. & A_g
\end{bmatrix}, 
\begin{bmatrix} v_f \\ v_g \end{bmatrix}
\right)
\end{displaymath}
of dimension $n_f+n_g$
(from Proposition~\ref{pro:ft.ratop})
for $f+g$ is minimal.
\end{proposition}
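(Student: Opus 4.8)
The plan is to establish minimality through a rank count, invoking the disjointness hypothesis directly rather than recomputing the left and right families. Proposition~\ref{pro:ft.ratop} already guarantees that $\als{A}_f + \als{A}_g$ is a bona fide linear representation of $f+g$: its system matrix is block upper triangular with the full (hence over $\field{F}$ invertible) blocks $A_f$ and $A_g$ on the diagonal, so it is itself invertible over $\field{F}$. Thus the only datum I take for granted is that we are handed a representation of $f+g$ of dimension $n_f+n_g$.

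First I would record the two consequences of the minimality of the given systems: by Definition~\ref{def:ft.rep}, $n_f = \rank f$ and $n_g = \rank g$. Next, because $\als{A}_f + \als{A}_g$ is \emph{some} representation of $f+g$ and the rank is by definition the least dimension occurring among all representations of an element, I obtain the inequality $\rank(f+g)\le n_f+n_g$.

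The decisive step is to upgrade this inequality to an equality using disjointness. By Definition~\ref{def:ft.disjoint}, the assumption that $f$ and $g$ are disjoint reads precisely $\rank(f+g) = \rank f + \rank g = n_f+n_g$. Combined with the inequality above, the representation $\als{A}_f+\als{A}_g$ has dimension equal to $\rank(f+g)$ and therefore attains the minimal possible dimension, which is exactly what it means to be minimal.

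In truth there is no serious obstacle here: once the definition of disjointness is unwound the statement is immediate, and the single point needing a word of justification---the fullness of the block system matrix---is already subsumed by Proposition~\ref{pro:ft.ratop}. As an independent cross-check one could instead verify minimality via Proposition~\ref{pro:ft.cohn94.47}, by showing that the left family of the constructed system spans $L(f)+L(g)$ and its right family spans $R(f)+R(g)$, these sums being direct exactly when $f$ and $g$ are disjoint; but that route is strictly more laborious than the rank count above and is not required for the stated implication.
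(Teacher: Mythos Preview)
Your argument is correct and matches the paper's treatment: the paper does not give a separate proof but remarks just before the proposition that the formulation ``is immediate,'' and your rank count is exactly the intended one-line justification from Definition~\ref{def:ft.disjoint}. The aside about the alternative check via Proposition~\ref{pro:ft.cohn94.47} is accurate but, as you note, unnecessary here.
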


Like in the polynomial case, factorization and \emph{minimal}
multiplication are tight together as opposite points of view.
Further assumptions that guarantee minimality are developed
in Section~\ref{sec:ft.ft}.
They eventually enter in Theorem~\ref{thr:ft.minmul}.
We will need alternative constructions
(to that in Proposition~\ref{pro:ft.ratop})
for the product
several times, so we state them already here
in Propositions~\ref{pro:ft.mul2} and~\ref{pro:ft.mul1}.
These constructions are used in particular in Theorem~\ref{thr:ft.minmul}.
Before, we need some technical results from
\cite{Schrempf2017a9}
\ and
\cite{Schrempf2017b9}
. However these are rearranged such that similarities become
more obvious and the flexibility in applications is increased.
In particular we prove Lemma~\ref{lem:ft.for1} by applying
Lemma~\ref{lem:ft.rt1}.

\begin{lemma}[\protect{%
\cite[Lemma~25]{Schrempf2017b9}
}]\label{lem:ft.rt1}
Let $\als{A} = (u,A,v)$ be an ALS
of dimension $n \ge 1$ with $\field{K}$-linearly independent
left family $s = A\inv v$ and
$B = B_0 \otimes 1 + B_1 \otimes x_1 + \ldots + B_d \otimes x_d$
with $B_\ell\in \field{K}^{m\times n}$, such that
$B s = 0$. Then there exists a (unique) $T \in \field{K}^{m \times n}$
such that $B = TA$.
\end{lemma}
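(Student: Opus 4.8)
The plan is to split the statement into its two assertions and handle existence by turning the equation $Bs=0$ into a non-fullness statement for a linear matrix. Uniqueness is immediate: if $T_1A=B=T_2A$ then $(T_1-T_2)A=0$, and since $A$ is full, hence invertible over $\field{F}$, right-multiplication by $A\inv$ gives $T_1=T_2$. For existence I would first reduce to the case $m=1$: writing $B$ row by row, the claim $B=TA$ amounts to finding, for each row $b=\sum_\ell b_\ell\otimes x_\ell$ of $B$ (an affine-linear row vector, $b_\ell\in\field{K}^{1\times n}$) with $bs=0$, a \emph{constant} row $t\in\field{K}^{1\times n}$ with $b=tA$; stacking the rows $t$ yields $T$. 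So I fix one such $b$. Note also that $v\neq 0$, since $v=As$ and $s\neq 0$ (its components are $\field{K}$-independent).

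The key reformulation is that, because $As=v$ and $bs=0$, the nonzero vector $\bigl[\begin{smallmatrix} s\\ -1\end{smallmatrix}\bigr]$ lies in the right kernel of the linear matrix
\begin{displaymath}
\hat{A} = \begin{bmatrix} A & v\\ b & 0\end{bmatrix}\in\freeALG{\field{K}}{X}^{(n+1)\times(n+1)},
\qquad
\hat{A}\begin{bmatrix} s\\ -1\end{bmatrix}=\begin{bmatrix} As-v\\ bs\end{bmatrix}=0 .
\end{displaymath}
Thus $\hat{A}$ is singular over $\field{F}$, hence not full. As $\hat{A}$ contains the full $n\times n$ block $A$, and deleting a row and a column cannot raise the inner rank, its inner rank is exactly $n$. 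By Lemma~\ref{lem:ft.cohn95.636} a linear non-full matrix is associated over $\field{K}$ to a hollow matrix, so structural information about the rows of $\hat{A}$ is available.

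The crux — and the step I expect to be the main obstacle — is to upgrade this to a relation with \emph{scalar} coefficients. Over $\field{F}$ non-fullness only gives $b=gA$ with $g=bA\inv\in\field{F}^{1\times n}$ and $gv=0$, which is vacuous; the content of the lemma is that $g$ already lies in $\field{K}^{1\times n}$, and this is precisely where the two remaining hypotheses must be used: $v$ is \emph{constant} (the last column of $\hat{A}$ is constant) and $s$ is $\field{K}$-independent (equivalently the rows of $\bigl[\begin{smallmatrix}A & v\end{smallmatrix}\bigr]$ are left $\field{F}$-independent, since $tA=0$ forces $t=0$). I would carry out the upgrade using that $\freeALG{\field{K}}{X}$ is a fir: its left row-module has rank equal to the inner rank $n$, while the first $n$ rows already span a free rank-$n$ submodule, so the last row $[\,b\ \ 0\,]$ is torsion over it. This yields a \emph{polynomial} identity $c\,b=\underline{c}\,A$ with $0\neq c\in\freeALG{\field{K}}{X}$, $\underline{c}\in\freeALG{\field{K}}{X}^{1\times n}$ and $\underline{c}\,v=0$. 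A leading-term comparison in $\freeALG{\field{K}}{X}$, in which the linearity of $b$ and $A$ is decisive, should then force $\underline{c}=c\,t$ for a constant $t\in\field{K}^{1\times n}$; cancelling $c$ (a domain) gives $b=tA$, and $\underline{c}\,v=0$ gives $tv=0$.

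As a consistency check, the rows $tA$ with $tv=0$ already form an $(n-1)$-dimensional space of affine-linear annihilators of $s$ (the map $t\mapsto tA$ is injective by fullness of $A$, and $\{t: tv=0\}$ has dimension $n-1$ since $v\neq 0$); the argument above shows these are \emph{all} such annihilators. The genuine difficulty is concentrated entirely in passing from the $\field{F}$- (or polynomial) relation produced by non-fullness to one over $\field{K}$, and I would expect the constancy of $v$ together with the independence of $s$ to be exactly what rules out the degenerate configurations.
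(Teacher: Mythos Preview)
The paper does not prove this lemma; it merely quotes it from \cite{Schrempf2017b9}. So there is no in-paper argument to compare against, and your proposal has to be judged on its own.

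Your setup is sound: uniqueness via invertibility of $A$, reduction to $m=1$, and the observation that
\[
\hat{A}=\begin{bmatrix} A & v\\ b & 0\end{bmatrix}
\]
is linear and non-full (with inner rank exactly $n$) are all correct and are exactly the right starting point. The torsion argument in the fir $\freeALG{\field{K}}{X}$ is also fine and does yield a relation $c\,b=\underline{c}\,A$, $\underline{c}\,v=0$ with $0\neq c\in\freeALG{\field{K}}{X}$ and $\underline{c}\in\freeALG{\field{K}}{X}^{1\times n}$.

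The genuine gap is the ``leading-term comparison'' meant to force $\underline{c}=c\,t$ with $t\in\field{K}^{1\times n}$. This step does not follow from degree considerations alone. Over $\field{F}$ the relation gives $\underline{c}_j=c\,(bA^{-1})_j$, but $c\,g\in\freeALG{\field{K}}{X}$ for $g\in\field{F}$ does \emph{not} imply $g\in\freeALG{\field{K}}{X}$, let alone $g\in\field{K}$ (take $c=xy$, $g=y^{-1}$). On the homogeneous side, comparing top-degree parts of $cb=\underline{c}A$ only yields $\underline{c}^{(\text{top})}A^{(1)}=0$, and the degree-one part $A^{(1)}$ of a full linear matrix need not itself be full, so nothing forces $\underline{c}^{(\text{top})}$ to vanish. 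In short, linearity of $b$ and $A$ is not ``decisive'' here in the way you hope; the argument as written does not close.

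A repair that stays within your framework is to exploit the hollow form of $\hat A$ directly instead of passing through a polynomial relation. By Lemma~\ref{lem:ft.cohn95.636} there exist invertible $P,Q$ over $\field{K}$ with $P\hat{A}Q$ hollow, and since the inner rank is $n$ the zero block has size $k\times(n{+}2{-}k)$. The case $k=1$ gives a nonzero $p\in\field{K}^{1\times(n+1)}$ with $p\hat{A}=0$; fullness of $A$ forces $p_{n+1}\neq 0$, and $b=-(p_{n+1})^{-1}p'\,A$ follows. For $k\ge 2$ one multiplies the block equation $P\hat A Q\cdot Q^{-1}\bigl[\begin{smallmatrix}s\\-1\end{smallmatrix}\bigr]=0$ through, uses that the upper-left $k\times(k-1)$ block has full column rank over $\field{F}$, and reads off $R_1\bigl[\begin{smallmatrix}s\\-1\end{smallmatrix}\bigr]=0$ for the corresponding $(k-1)\times(n+1)$ block $R_1$ of $Q^{-1}$. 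The $\field{K}$-independence of $s$ then forces $R_1$ to have $\field{K}$-rank at most $1$, which (together with the constancy of the last column $[\,v;0\,]$) collapses the situation back to a scalar left-kernel vector of $\hat A$. This is where the two hypotheses you correctly single out---$v$ constant and $s$ independent---actually do the work; they are not yet doing any work in your torsion/leading-term route.
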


\begin{lemma}\label{lem:ft.rt2}
Let $\als{A} = (u,A,v)$ be an ALS 
of dimension $n \ge 1$ with $\field{K}$-linearly independent
right family $t = u A\inv$ and
$B = B_0 \otimes 1 + B_1 \otimes x_1 + \ldots + B_d \otimes x_d$
with $B_\ell\in \field{K}^{n\times m}$, such that
$t B = 0$. Then there exists a (unique) $U \in \field{K}^{n \times m}$
such that $B = AU$.
\end{lemma}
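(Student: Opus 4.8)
The plan is to reduce the statement to Lemma~\ref{lem:ft.rt1} by transposition, exploiting the duality between the left and the right family. I would first introduce the transposed pencil $\tilde{A} = A_0\trp \otimes 1 + A_1\trp \otimes x_1 + \ldots + A_d\trp \otimes x_d$, so that $\tilde{A} = A\trp$ entrywise and $\tilde{A}$ is again linear. Since transposition preserves the inner rank (a rank factorization $A = TU$ yields $A\trp = U\trp T\trp$), the matrix $\tilde{A}$ is full, hence invertible over $\field{F}$, and $(v\trp, \tilde{A}, u\trp)$ is a legitimate linear representation. Its left family is $\tilde{s} = \tilde{A}\inv u\trp = (u A\inv)\trp = t\trp$, i.e. the componentwise transpose of the given right family $t$; in particular $\tilde{s}$ is $\field{K}$-linearly independent precisely because $t$ is.

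Next I would transpose the datum $B$. Put $\tilde{B} = B_0\trp \otimes 1 + B_1\trp \otimes x_1 + \ldots + B_d\trp \otimes x_d$, a linear matrix with coefficients $B_\ell\trp \in \field{K}^{m \times n}$. The hypothesis $t B = 0$ transposes to $\tilde{B} \tilde{s} = B\trp t\trp = (t B)\trp = 0$. Thus $\tilde{B}$ annihilates the $\field{K}$-linearly independent left family $\tilde{s}$ of $\tilde{A}$, which is exactly the situation of Lemma~\ref{lem:ft.rt1}. That lemma produces a unique $\tilde{T} \in \field{K}^{m \times n}$ with $\tilde{B} = \tilde{T} \tilde{A}$. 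Transposing back gives $B = \tilde{A}\trp \, \tilde{T}\trp = A U$ with $U := \tilde{T}\trp \in \field{K}^{n \times m}$, as required. For uniqueness I would argue directly: $A$ is full and therefore invertible over $\field{F}$, so $A U = A U'$ with $U, U' \in \field{K}^{n \times m}$ forces $U = U'$; this also matches the uniqueness of $\tilde{T}$ furnished by Lemma~\ref{lem:ft.rt1}.

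The one point requiring care is that Lemma~\ref{lem:ft.rt1} is phrased for an ALS, i.e. with the normalization $u = e_1$, whereas the transposed triple $(v\trp, \tilde{A}, u\trp)$ need not satisfy $v\trp = e_1$. I expect this to be harmless: the conclusion $\tilde{B} = \tilde{T} \tilde{A}$ in Lemma~\ref{lem:ft.rt1} is derived solely from the relation $\tilde{B} \tilde{s} = 0$ together with the $\field{K}$-linear independence of the left family $\tilde{s} = \tilde{A}\inv u\trp$, and nowhere uses the first row of the representation. Hence the main (and essentially only) obstacle is a one-line observation that the proof of Lemma~\ref{lem:ft.rt1} goes through verbatim for an arbitrary linear representation with $\field{K}$-linearly independent left family. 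Once this is noted, the transposition argument closes the lemma, and the whole proof amounts to little more than an application of Lemma~\ref{lem:ft.rt1} to the transposed data.
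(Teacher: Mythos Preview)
Your reduction to Lemma~\ref{lem:ft.rt1} via transposition has a genuine gap: over the non-commutative free field $\field{F}$, entrywise matrix transposition is \emph{not} an anti-homomorphism, so the identities $\tilde{A}\inv u\trp = (uA\inv)\trp$ and $B\trp t\trp = (tB)\trp$ that you invoke both fail. (The same error already appears in your justification that $A\trp$ is full: from $A=TU$ one does \emph{not} get $A\trp = U\trp T\trp$ when the entries do not commute.) For a concrete instance take $A=\bigl[\begin{smallmatrix}1&x\\y&1\end{smallmatrix}\bigr]$: the $(1,1)$-entry of $(A\trp)\inv$ is $(1-yx)\inv$, whereas that of $(A\inv)\trp$ is $(1-xy)\inv$. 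Hence your $\tilde{s}$ is not $t\trp$, and $\tilde{B}\tilde{s}=0$ does not follow from $tB=0$ as written.

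The repair is standard. Compose transposition with the reversal anti-automorphism $\iota$ of $\freeFLD{\field{K}}{X}$ that fixes $\field{K}$ and each letter $x_i$; the map $A\mapsto A^{*}:=\iota(A)\trp$ then satisfies $(CD)^{*}=D^{*}C^{*}$, and since $\iota$ fixes every entry of a \emph{linear} matrix one has $A^{*}=A\trp$ and $B^{*}=B\trp$. Running your argument with $^{*}$ in place of $\trp$ gives $\tilde{s}=(A^{*})\inv u\trp$ with components $\iota(t_i)$, which are $\field{K}$-linearly independent because $\iota$ is a $\field{K}$-linear bijection; applying $\iota$ to $tB=0$ yields $B^{*}\tilde{s}=0$; Lemma~\ref{lem:ft.rt1} produces $\tilde{T}\in\field{K}^{m\times n}$ with $B^{*}=\tilde{T}A^{*}$; and applying $^{*}$ once more gives $B=A\tilde{T}\trp$. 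The paper itself omits the proof, treating Lemma~\ref{lem:ft.rt2} as the evident left/right dual of Lemma~\ref{lem:ft.rt1}; a direct dual argument, rerunning the proof of \cite[Lemma~25]{Schrempf2017b9} with the roles of rows and columns interchanged, avoids the transposition subtlety altogether and is presumably what is intended.
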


\begin{lemma}[for Type~$(0,1)$
\protect{\cite[Lemma~4.11]{Schrempf2017a9}
}]\label{lem:ft.for1}
Let $\als{A} = (u,A,v)$ be a \emph{minimal} ALS
with $\dim \als{A} =n \ge 2$ and $1\in L(\als{A})$. Then there
exists an admissible transformation $(P,Q)$ such that
the last row of $PAQ$ is $[0,\ldots,0,1]$
and $Pv = [0,\ldots,0,\lambda]\trp$
for some $\lambda \in \field{K}$.
\end{lemma}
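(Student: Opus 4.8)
The plan is to exploit the hypothesis $1\in L(\als{A})$ in two stages: first, by a purely column-side ($Q$-) change of basis, produce a minimal ALS whose left family has a \emph{constant} last entry, and then peel that constant off by a row-side ($P$-) transformation whose existence is certified by Lemma~\ref{lem:ft.rt1}.

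First I would note that minimality together with $\dim\als{A}=n\ge 2$ gives $\rank f = n\ge 2$, so $f\notin\field{K}$; hence $f$ and $1$ are $\field{K}$-linearly independent elements of the $n$-dimensional space $L(\als{A})=\linsp\{s_1,\dots,s_n\}$ (Proposition~\ref{pro:ft.cohn94.47}). Extending $\{f,1\}$ to a basis ordered as $\tilde s_1=s_1=f,\ \tilde s_2,\dots,\tilde s_{n-1},\ \tilde s_n=1$, the change of basis $s=Q\tilde s$ defines an invertible $Q\in\field{K}^{n\times n}$; since $\tilde s_1=s_1$, the first row of $Q$ is $e_1$, so $(I,Q)$ is admissible. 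Applying it yields the ALS $(e_1,\hat A,v)$ with $\hat A=AQ$ and left family $\tilde s=Q\inv s$ satisfying $\tilde s_n=1$, still $\field{K}$-linearly independent. Because $v\neq 0$ (as $f\neq 0$), a preliminary elementary row operation $(P_0,I)$, which is admissible and leaves $\tilde s$ untouched, lets me assume in addition that the last entry $v_n$ of $v$ is nonzero.

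The heart of the argument is then the following constant-entry observation. Since $\tilde s_n=1$ we have $v=v\,\tilde s_n=(v e_n)\tilde s$, where $e_n=[0,\dots,0,1]$, so $(\hat A - v e_n)\tilde s=\hat A\tilde s - v=0$. As $\hat A - v e_n$ is linear, Lemma~\ref{lem:ft.rt1} produces a unique $T\in\field{K}^{n\times n}$ with $\hat A - v e_n = T\hat A$. Reading off the last row gives $(e_n - e_n T)\hat A = v_n e_n$, and since $v_n\neq 0$ the \emph{constant} row $p:=v_n\inv(e_n-e_nT)$ satisfies $p\hat A=e_n$. Moreover $Tv=T\hat A\tilde s=(\hat A-ve_n)\tilde s=0$, whence $pv=v_n\inv(v_n-0)=1$.

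Finally I would assemble $P$ by taking its last row to be $p$ and its first $n-1$ rows to be any basis of the hyperplane $\{w\in\field{K}^{1\times n}:wv=0\}$; since $pv=1\neq 0$, $p$ lies outside this hyperplane, so $P$ is invertible. By construction the last row of $P\hat A$ is $p\hat A=e_n=[0,\dots,0,1]$ and $Pv=[0,\dots,0,1]\trp$, so the total admissible transformation $(PP_0,Q)$ gives the claim with $\lambda=1$. The step I expect to be the main obstacle is the middle one: a priori the last row $e_n\hat A\inv$ is merely an element of the free field, and the real content is that engineering $\tilde s_n=1$ (with $v_n\neq 0$) forces it to be constant, which is precisely what Lemma~\ref{lem:ft.rt1} delivers; by comparison the bookkeeping to secure $v_n\neq 0$ and the invertibility of $P$ is routine.
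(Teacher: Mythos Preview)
Your argument is correct and follows essentially the same route as the paper: first arrange, via an admissible $(I,Q)$, that the last entry of the left family equals $1$, and then use Lemma~\ref{lem:ft.rt1} to manufacture the constant row giving the desired last row $[0,\dots,0,1]$. The only cosmetic differences are that the paper normalises $v$ all the way to $e_n\trp$ before invoking Lemma~\ref{lem:ft.rt1} on the single row $B=[c,\,d-1]$, whereas you keep a general $v$ with $v_n\neq 0$, apply the lemma to the whole matrix $\hat A - v e_n$, and then read off the last row; and the paper writes the resulting $P$ explicitly as a near-identity matrix while you assemble it from $p$ together with a basis of the hyperplane $\{w:wv=0\}$.
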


\begin{proof}
Without loss of generality, assume that $v = [0,\ldots,0,1]\trp$ and
the left family $s = A\inv v$ is $(s_1, s_2,$ $\ldots, s_{n-1}, 1)$.
Otherwise it can be brought to this form by some admissible transformation
$(P^\circ, Q^\circ)$.
Now let $\bar{A}$ denote the upper left $(n-1)\times (n-1)$ block of $A$,
let $\bar{s} = (s_1, \ldots, s_{n-1})$ and write $As = v$ as
\begin{displaymath}
\begin{bmatrix}
\bar{A} & b \\
c & d
\end{bmatrix}
\begin{bmatrix}
\bar{s} \\ 1
\end{bmatrix}
=
\begin{bmatrix}
0 \\ 1
\end{bmatrix}.
\end{displaymath}
Now let $B = [ c, d-1]$ and apply Lemma~\ref{lem:ft.rt1}
to get the matrix $T = [\bar{T}, \tau ] \in \field{K}^{1 \times n}$
such that $B = TA$. Thus we get the transformation
\begin{displaymath}
(P,Q) = \left(
\begin{bmatrix}
I_{n-1} & . \\
-\bar{T} & -\tau
\end{bmatrix}
P^\circ,
Q^\circ
\right).
\end{displaymath}
\end{proof}

\begin{lemma}[for Type~$(1,0)$
\protect{\cite[Lemma~4.12]{Schrempf2017a9}
}]\label{lem:ft.for2}
Let $\als{A} = (u,A,v)$ be a \emph{minimal} ALS
with $\dim \als{A} = n \ge 2$ and $1\in R(\als{A})$. Then there
exists an admissible transformation $(P,Q)$ such that
the first column of $PAQ$ is $[1,0,\ldots,0]\trp$
and $Pv = [0,\ldots,0,\lambda]\trp$
for some $\lambda \in \field{K}$.
\end{lemma}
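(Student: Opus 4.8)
The plan is to mirror the proof of Lemma~\ref{lem:ft.for1}, exploiting the left/right duality established by Lemma~\ref{lem:ft.rt2} in place of Lemma~\ref{lem:ft.rt1}. Since $1 \in R(\als{A})$ means that the constant $1$ lies in the $\field{K}$-linear span of the right family $t = uA\inv$, I would first apply some admissible transformation $(P^\circ, Q^\circ)$ to arrange, without loss of generality, that $u = e_1 = [1,0,\ldots,0]$ (which holds automatically for an ALS) and that the right family has the form $t = (1, t_2, \ldots, t_n)$, i.e.\ its \emph{first} component equals $1$. This is the natural ``transpose'' of the normalization used in Lemma~\ref{lem:ft.for1}, where the last component of the left family was set to $1$; here the relevant distinguished position is the first one, matching the fact that we want to produce a distinguished first \emph{column} rather than a last row.

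Next I would block-decompose $A$ accordingly. Writing $t A = u$ (the defining relation for the right family) and splitting off the first column, let $\bar{A}$ denote the lower right $(n-1)\times(n-1)$ block, and partition
\begin{displaymath}
A =
\begin{bmatrix}
a & b \\
c & \bar{A}
\end{bmatrix},
\end{displaymath}
with $t = [1, \bar{t}]$ where $\bar{t} = (t_2, \ldots, t_n)$. The goal first column $[1,0,\ldots,0]\trp$ differs from the actual first column $[a, c\trp]\trp$ of $A$ by a quantity that I can annihilate using a column operation. Concretely, I would form the linear matrix $B$ recording the discrepancy in the first column and apply Lemma~\ref{lem:ft.rt2}: since $t B = 0$ by construction (the first column relation holds because $1$ is genuinely in the right family), there exists a unique $U \in \field{K}^{n \times 1}$ with $B = AU$. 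This $U$ furnishes the column transformation $Q$ that cleans out the first column, producing $PAQ$ whose first column is $[1,0,\ldots,0]\trp$. The row transformation $P$ (together with $P^\circ$) is then chosen to bring $v$ into the form $[0,\ldots,0,\lambda]\trp$; because the construction is dual, the roles of ``first/last'' and ``row/column'' are systematically interchanged relative to Lemma~\ref{lem:ft.for1}.

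I expect the main obstacle to be bookkeeping the duality correctly: one must verify that applying Lemma~\ref{lem:ft.rt2} (the right-family analogue) produces a \emph{column} transformation $U$ on the correct side, i.e.\ $B = AU$ gives $Q$ as a right multiplier, and that this $Q$ can be taken \emph{admissible} (its first row equal to $e_1$) while simultaneously the resulting $P$ keeps $Pv$ in the desired shape. In Lemma~\ref{lem:ft.for1} the transformation $T$ from Lemma~\ref{lem:ft.rt1} entered $P$ as a \emph{row} operation; here, by the same token, $U$ from Lemma~\ref{lem:ft.rt2} should enter $Q$ as a \emph{column} operation, and I would need to check that the combined $(P,Q)$ remains admissible and achieves both the first-column normalization of $A$ and the $[0,\ldots,0,\lambda]\trp$ normalization of $v$ simultaneously rather than spoiling one while fixing the other. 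Once the correspondence ``left family $\leftrightarrow$ right family, row $\leftrightarrow$ column, last $\leftrightarrow$ first'' is set up carefully, the argument should transcribe essentially verbatim, so the remaining steps are routine.
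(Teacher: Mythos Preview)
The paper does not supply a proof of this lemma; it merely cites the companion paper and places the statement immediately after the dual Lemma~\ref{lem:ft.for1}, signalling that the intended argument is exactly the left/right dualization you describe. Your plan---normalize $t_1=1$, apply Lemma~\ref{lem:ft.rt2} to obtain a column vector $U$ with $B=AU$, and read off $Q$---is correct and is the natural mirror of the given proof. One point you can sharpen: the relation $tA=e_1$ forces $U_1=0$ automatically (since $e_1U = tAU = tB = 0$), so $Q = I - U e_1^{\!\top}$ is admissible and has determinant~$1$ with no extra work.

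The one place where the duality is not perfectly symmetric is the $v$-normalization. In Lemma~\ref{lem:ft.for1} the shape $v=[0,\ldots,0,1]^{\!\top}$ is arranged \emph{first} (via $P^\circ$) and the subsequent row operation preserves it. In your dual, the column operation $Q$ does not touch $v$ at all, while the preliminary $P^\circ$ was spent arranging $t_1=1$; so after fixing the first column of $A$ you still need one further row transformation $P$ with first column $e_1$ (so as not to disturb the freshly normalized first column of $PAQ$) to bring $v$ into the form $[0,\ldots,0,\lambda]^{\!\top}$. Minimality of $\als{A}$ guarantees this is possible: if after the column fix $v$ were supported only in the first coordinate, then $s=A^{-1}v$ would be a scalar multiple of $e_1$, contradicting $\field{K}$-linear independence of the left family for $n\ge 2$. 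You correctly flagged this simultaneity issue as the main concern; it is the only spot where ``verbatim transcription'' requires a short supplementary argument rather than pure symmetry.
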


\begin{remark}
If $g$ is of type $(*,1)$ then, by Lemma~\ref{lem:ft.for1},
\emph{each} minimal ALS for $g$ can be transformed into one
with a last row of the form $[0,\ldots,0,1]$.
If $g$ is of type $(1,*)$ then, by Lemma~\ref{lem:ft.for2},
\emph{each} minimal ALS for $g$ can be transformed into one
with a first column of the form $[1,0,\ldots,0]\trp$.
This can be done by \emph{linear} techniques,
see the remark before
\cite[Theorem~4.13]{Schrempf2017a9}
.
\end{remark}

\medskip
Since $p \in \freeALG{\field{K}}{X}$ is of type $(1,1)$,
both constructions
can be used for the minimal polynomial multiplication
(Proposition~\ref{pro:ft.minmul}).
One could call the multiplication from Proposition~\ref{pro:ft.ratop}
type $(*,*)$. A \emph{necessary} condition for minimality however
is, that the left factor is of type~$(*,0)$ and
the right factor is of type~$(0,*)$,
thus we will use this construction later as type~$(0,0)$.
Section~\ref{sec:ft.ft} is dedicated to a \emph{sufficient} condition.
See also Figure~\ref{fig:ft.minmul}, page~\pageref{fig:ft.minmul}.

\begin{proposition}[Multiplication Type $(1,*)$]\label{pro:ft.mul2}
Let $f,g\in \field{F} \setminus \field{K}$ be given by the
admissible linear systems
$\als{A}_f = (u_f, A_f, v_f) = (1,A_f,\lambda_f)$ of dimension $n_f$
of the form
\begin{equation}\label{eqn:ft.mul2.f}
\als{A}_f = \left(
\begin{bmatrix}
1 & . & . 
\end{bmatrix},
\begin{bmatrix}
a & b' & b \\
a' & B & b'' \\
. & . & 1
\end{bmatrix},
\begin{bmatrix}
. \\ . \\ \lambda_f
\end{bmatrix}
\right)
\end{equation}
and $\als{A}_g = (u_g, A_g, v_g) = (1,A_g,\lambda_g)$
of dimension $n_g$ respectively.
Then an ALS for $fg$ of dimension $n = n_f + n_g - 1$ is given by
\begin{equation}\label{eqn:ft.mul2.fg}
\als{A} = \left(
\begin{bmatrix}
1 & . & . 
\end{bmatrix},
\begin{bmatrix}
a & b' & \lambda_f b u_g \\
a' & B & \lambda_f b'' u_g \\
. & . & A_g
\end{bmatrix},
\begin{bmatrix}
. \\ . \\ v_g
\end{bmatrix}
\right).
\end{equation}
\end{proposition}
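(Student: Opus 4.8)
The plan is to reduce \eqref{eqn:ft.mul2.fg} to the general (``type $(*,*)$'') product of Proposition~\ref{pro:ft.ratop} and then to verify directly that the represented element is $fg$. Throughout I write $\als{A}_f$ in the block form
\[
A_f = \begin{bmatrix} \hat A_f & \hat b \\ . & 1 \end{bmatrix}, \quad
\hat A_f = \begin{bmatrix} a & b' \\ a' & B \end{bmatrix}, \quad
\hat b = \begin{bmatrix} b \\ b'' \end{bmatrix}, \quad
v_f = \begin{bmatrix} . \\ \lambda_f \end{bmatrix},
\]
so that $\hat A_f \in \freeALG{\field{K}}{X}^{(n_f-1)\times(n_f-1)}$, the last row of $A_f$ is $[0,\ldots,0,1]$, and the system matrix in \eqref{eqn:ft.mul2.fg} is exactly $\left[\begin{smallmatrix} \hat A_f & \lambda_f \hat b u_g \\ . & A_g \end{smallmatrix}\right]$, its right-hand side has $v_g$ in the last $n_g$ entries and zeros above, and $u = e_1$.

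First I would check that \eqref{eqn:ft.mul2.fg} is a bona fide linear representation. The row vector $u = e_1$ is admissible; the new off-diagonal entries $\lambda_f b u_g$ and $\lambda_f b'' u_g$ are products of the linear entries $b,b''$ of $A_f$ with the constant vector $\lambda_f u_g \in \field{K}^{1\times n_g}$, hence the whole matrix is linear. For fullness, note that it is block upper triangular with diagonal blocks $\hat A_f$ and $A_g$; since $A_g$ is invertible over $\field{F}$ and $A_f = \left[\begin{smallmatrix} \hat A_f & \hat b \\ . & 1\end{smallmatrix}\right]$ is invertible over $\field{F}$ with lower-right block $1$, the block $\hat A_f$ is invertible over $\field{F}$ as well, so the product matrix is invertible over $\field{F}$, i.e.\ full.

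Next I would read off the element by solving $A s = v$. Partitioning $s = [\,s'\trp, s_g\trp\,]\trp$ with $s_g$ of length $n_g$, the bottom block gives $A_g s_g = v_g$, so $s_g = A_g\inv v_g$ is the left family of $g$ and $u_g s_g = u_g A_g\inv v_g = g$. The top block then reads $\hat A_f s' + \lambda_f \hat b (u_g s_g) = 0$, that is $\hat A_f s' = -\lambda_f \hat b\, g$. Solving $A_f s_f = v_f$ in the same block form forces the last component of $s_f$ to be $\lambda_f$ and leaves $\hat A_f \bar s_f = -\lambda_f \hat b$ for the truncated left family $\bar s_f$, with $f = (s_f)_1 = (\bar s_f)_1$. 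Invertibility of $\hat A_f$ now gives $s' = \bar s_f\, g$, so the first component of $s$ equals $(\bar s_f)_1 g = fg$; since $u = e_1$ this identifies the represented element as $fg$, and the dimension is $(n_f-1)+n_g = n_f+n_g-1$.

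The step I expect to be the crux is the fullness claim, i.e.\ the invertibility of the Schur block $\hat A_f$ over $\field{F}$, since it is simultaneously what makes $A$ full and what makes the equation $\hat A_f s' = -\lambda_f \hat b g$ determine $s'$ uniquely; the only remaining care is the bookkeeping in degenerate sizes (for instance $n_f = 2$, where $a',b',B,b''$ are empty). As an alternative to the direct solve, one can obtain \eqref{eqn:ft.mul2.fg} by deflation from Proposition~\ref{pro:ft.ratop}: there the coupling block is $-v_f u_g = \left[\begin{smallmatrix} 0 \\ -\lambda_f u_g\end{smallmatrix}\right]$, whose only nonzero part sits in the old last row of $A_f$, turning that row into $[0,\ldots,0,1,-\lambda_f u_g]$; adding $-\hat b$ times this row to the top block and then clearing its remaining $A_g$-column entries by a compensating column operation turns that row and column into a decoupled $1\times1$ identity block, whose deletion yields exactly \eqref{eqn:ft.mul2.fg}. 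Both routes preserve the represented element and give the same conclusion.
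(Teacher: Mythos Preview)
Your proof is correct. Your primary route---verifying fullness of the new system matrix via invertibility of the Schur block $\hat A_f$ over $\field{F}$, and then solving $As=v$ directly to read off $s_1=fg$---is a genuinely different argument from the paper's. The paper takes essentially your alternative deflation route, but in a leaner form: starting from the $(n_f+n_g)$-dimensional product ALS of Proposition~\ref{pro:ft.ratop}, it performs only the single column operation ``add $\lambda_f$ times column~$n_f$ to column~$n_f{+}1$'' (which turns row~$n_f$ into $e_{n_f}$ while $v'_{n_f}=0$) and then deletes row and column~$n_f$. Your preliminary row operation subtracting $\hat b$ times row~$n_f$ is harmless but unnecessary: once row~$n_f$ reads $s_{n_f}=0$, the entries $\hat b$ remaining in column~$n_f$ contribute nothing to the other equations and may be dropped together with that column.

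What each approach buys: your direct computation is self-contained (no appeal to Proposition~\ref{pro:ft.ratop}) and makes the left family of the product system explicit as $(\bar s_f\,g,\,s_g)$, which is exactly the kind of information used later in Lemmas~\ref{lem:ft.slinin}--\ref{lem:ft.tlinin}. The paper's argument is shorter and avoids the separate verification that $\hat A_f$ is invertible over $\field{F}$, since fullness of the deflated matrix is inherited automatically from fullness of the original product system under an invertible column operation followed by stripping off a decoupled $1\times 1$ block.
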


\begin{proof}
Construct the ALS $\als{A}'=(u',A',v')$ of dimension $n_f+n_g$
for the product $fg$
using Proposition~\ref{pro:ft.ratop}. Add $\lambda_f$-times
column~$n_f$ to column~$(n_f+1)$ (in the system matrix $A'$).
Remove column~$n_f$ from $A'$ and $v'$ and row~$n_f$
from $A'$ and $u'$ to get the ALS~\eqref{eqn:ft.mul2.fg}
of dimension $n_f+n_g-1$.
\end{proof}

\begin{proposition}[Multiplication Type $(*,1)$]\label{pro:ft.mul1}
Let $f,g\in \field{F} \setminus \field{K}$ be given by the
admissible linear systems $\als{A}_f = (u_f, A_f, v_f) = (1, A_f, \lambda_f)$ 
of dimension $n_f$ and
$\als{A}_g = (u_g, A_g, v_g) = (1, A_g, \lambda_g)$
of dimension $n_g$ of the form
\begin{equation}\label{eqn:ft.mul1.g}
\als{A}_g = \left(
\begin{bmatrix}
1 & . & .
\end{bmatrix},
\begin{bmatrix}
1 & b'& b \\
. & B & b'' \\
. & c' & c
\end{bmatrix},
\begin{bmatrix}
. \\ . \\ \lambda_g
\end{bmatrix}
\right)
\end{equation}
respectively.
Then an ALS for $fg$ of dimension $n = n_f + n_g -1$ is given by
\begin{equation}\label{eqn:ft.mul1.fg}
\als{A} = \left(
\begin{bmatrix}
u_f & . & . 
\end{bmatrix},
\begin{bmatrix}
A_f & e_{n_f} \lambda_f b' & e_{n_f} \lambda_f b \\
. & B & b'' \\
. & c' & c
\end{bmatrix},
\begin{bmatrix}
. \\ . \\ \lambda_g
\end{bmatrix}
\right).
\end{equation}
\end{proposition}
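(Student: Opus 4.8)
The plan is to imitate the proof of Proposition~\ref{pro:ft.mul2}, but with the roles of rows and columns interchanged, since now it is the right factor $g$ (rather than the left factor $f$) that carries the special structure. First I would write down the general product $\als{A}' = (u', A', v')$ of dimension $n_f + n_g$ supplied by Proposition~\ref{pro:ft.ratop},
\begin{displaymath}
A' = \begin{bmatrix} A_f & -v_f u_g \\ . & A_g \end{bmatrix}, \quad
u' = \begin{bmatrix} u_f & . \end{bmatrix}, \quad
v' = \begin{bmatrix} . \\ v_g \end{bmatrix}.
\end{displaymath}
Since $\als{A}_f = (1, A_f, \lambda_f)$ means $v_f = \lambda_f e_{n_f}$ and $\als{A}_g$ has $u_g = e_1$, the coupling block $-v_f u_g = -\lambda_f e_{n_f} e_1\trp$ has a single nonzero entry, namely $-\lambda_f$ in position $(n_f, n_f+1)$.

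Next I would exploit the hypothesis that the first column of $A_g$ is $[1, ., .]\trp$, equivalently that product row $n_f + 1$ equals $[0, \ldots, 0, 1, b', b]$ (zeros throughout the $f$-columns). Because the coupling block is supported only in row $n_f$, a single admissible row operation suffices: adding $\lambda_f$ times row $n_f+1$ to row $n_f$ (so $Q = I$) cancels the entry $-\lambda_f$ at $(n_f, n_f+1)$ and writes $\lambda_f[b', b]$ into the remaining $g$-columns of row $n_f$, while leaving the $f$-columns of that row — and hence the whole block $A_f$ — untouched. After this one operation the column $n_f+1$ has become the standard basis vector $e_{n_f+1}$.

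I would then \emph{deflate} by deleting row and column $n_f+1$, exactly dual to the row/column removal performed in Proposition~\ref{pro:ft.mul2}. Reading off the surviving blocks gives $A_f$ in the top left, the matrix $\bigl[\begin{smallmatrix} B & b'' \\ c' & c \end{smallmatrix}\bigr]$ — that is, $A_g$ with its first row and column removed — in the bottom right, and the coupling $e_{n_f}\lambda_f[b', b]$ concentrated in row $n_f$, which is precisely the system~\eqref{eqn:ft.mul1.fg} of dimension $n_f + n_g - 1$; the surviving $u$ is $[u_f, ., .]$ and the surviving $v$ is $[., ., \lambda_g]\trp$.

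The computations are routine, so I expect the only delicate point to be the justification of the deflation. Unlike in Proposition~\ref{pro:ft.mul2}, where the eliminated variable is literally $0$, here $(s')_{n_f+1}$ is in general nonzero and is only \emph{decoupled} from the rest of the system. The argument therefore rests on two observations about the reduced triple: first, since column $n_f+1$ equals $e_{n_f+1}$ and $(u')_{n_f+1} = 0$, the variable $(s')_{n_f+1}$ appears in no other equation and contributes nothing to the output, so the first solution component $fg$ is unchanged; second, permuting the index $n_f+1$ to the last position exhibits $A'$ as block lower triangular with a trailing $1$, so that fullness of $A'$ descends to the reduced matrix. Verifying that these two facts make the deletion a genuine reduction of admissible linear systems, rather than a formal manipulation, is where the care is needed.
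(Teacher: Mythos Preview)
Your proposal is correct and follows the paper's proof essentially verbatim: build the $n_f+n_g$ product system from Proposition~\ref{pro:ft.ratop}, add $\lambda_f$ times row $n_f+1$ to row $n_f$, then delete row and column $n_f+1$. The paper records exactly these three steps and nothing more; your additional paragraph justifying the deflation (that column $n_f+1$ becomes $e_{n_f+1}$ with $(u')_{n_f+1}=0$, and that fullness descends through the block-triangular structure) is extra care that the paper leaves implicit, but it is accurate and your observation that here the eliminated solution component is nonzero---unlike in Proposition~\ref{pro:ft.mul2}---is a genuine asymmetry between the two dual constructions.
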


\begin{proof}
Construct the ALS $\als{A}'=(u',A',v')$ of dimension $n_f+n_g$
for the product $fg$
using Proposition~\ref{pro:ft.ratop}. Add $\lambda_f$-times
row~$(n_f+1)$ to row~$n_f$ (in the system matrix $A'$).
Remove row~$(n_f+1)$ from $A'$ and $v'$ and column~$(n_f+1)$
from $A'$ and $u'$ to get the ALS~\eqref{eqn:ft.mul1.fg}
of dimension $n_f+n_g-1$.
\end{proof}

\begin{remark}
Recall that, if $f$ (respectively $g$) is given by a \emph{minimal} ALS,
it can be transformed appropriately by Lemma~\ref{lem:ft.for1}
(respectively Lemma~\ref{lem:ft.for2})
into the form \eqref{eqn:ft.mul2.f} (respectively \eqref{eqn:ft.mul1.g}).
\end{remark}

Lemma~\ref{lem:ft.min1} is a slightly more general version of
\cite[Lemma~27]{Schrempf2017b9}
. 
The proof of (the following) Proposition~\ref{pro:ft.minmul}
becomes simple
by the help of the two lemmas~\ref{lem:ft.slinin}
and~\ref{lem:ft.tlinin} which are extracted of the original
proof (of the minimal polynomial multiplication).
They are useful later,
especially in Lemma~\ref{lem:ft.special}.

\begin{remark}
Note that the transformation in the following lemma is \emph{not}
necessarily admissible. However, except for $n=2$ (which can be
treated by permuting the last two elements in the left family),
it can be chosen such that it is \emph{admissible}.
\end{remark}

\begin{lemma}\label{lem:ft.min1}
Let $\als{A} = (u,A,v)$ be an ALS of dimension $n\ge 2$
with $v = [0,\ldots,0,\lambda]\trp$ and
$\field{K}$-linearly dependent left family $s=A\inv v$.
Let $m \in \{ 2, 3, \ldots, n \}$ be the minimal index
such that the left subfamily $\underline{s} = (A\inv v)_{i=m}^n$
is $\field{K}$-linearly independent.
Let $A = (a_{ij})$ and assume that $a_{ii}=1$ for $1 \le i \le m$
and $a_{ij}=0$ for $j < i \le m$ (upper triangular $m \times m$ block)
and $a_{ij}=0$ for $j \le m < i$ (lower left zero block of size $(n-m) \times m$).
Then there exists matrices $T,U \in \field{K}^{1 \times (n+1-m)}$
such that
\begin{displaymath}
U + (a_{m-1,j})_{j=m}^n - T(a_{ij})_{i,j=m}^n  =
\begin{bmatrix}
  0 & \ldots & 0
\end{bmatrix}
\quad\text{and}\quad
T(v_i)_{i=m}^n = 0.
\end{displaymath}
\end{lemma}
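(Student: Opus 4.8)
The plan is to split the system $\als{A}$ along the pivot index $m$, read the dependence off row $m-1$, and obtain the two identities by applying Lemma~\ref{lem:ft.rt1} to the tail system governed by the independent subfamily $\underline{s}$.

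First I would isolate that tail system. By the assumed shape of $A$---the unit-diagonal upper triangular $m\times m$ block together with the lower left zero block of size $(n-m)\times m$---every entry $a_{ij}$ with $i\ge m$ and $j<m$ vanishes, while $a_{mm}=1$. Writing $\underline{s}=(A\inv v)_{i=m}^n$ and $v^{(m)}=(v_i)_{i=m}^n$, rows $m,\ldots,n$ of $As=v$ therefore decouple into $A^{(m)}\underline{s}=v^{(m)}$, where $A^{(m)}:=(a_{ij})_{i,j=m}^n$. Since $a_{mm}=1$ and $a_{im}=0$ for $i>m$, the block $A^{(m)}$ is block upper triangular with diagonal blocks $1$ and $(a_{ij})_{i,j=m+1}^n$; the latter is invertible because it is the lower right block of the invertible $A$ whose complementary block is unit upper triangular, so $A^{(m)}$ is full. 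Hence $(e_1,A^{(m)},v^{(m)})$ is an ALS whose left family is exactly the $\field{K}$-linearly independent $\underline{s}$, and Lemma~\ref{lem:ft.rt1} becomes available for it.

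Next I would extract the dependence from row $m-1$. Because $v=[0,\ldots,0,\lambda]\trp$ and $m-1<n$ we have $v_{m-1}=0$, and upper triangularity with $a_{m-1,m-1}=1$ yields $s_{m-1}+\sum_{j=m}^n a_{m-1,j}s_j=0$, that is $s_{m-1}=-(a_{m-1,j})_{j=m}^n\,\underline{s}$. The key observation is that, for any $T$, the row $U:=TA^{(m)}-(a_{m-1,j})_{j=m}^n$ satisfies $U\underline{s}=Tv^{(m)}+s_{m-1}$ (using $A^{(m)}\underline{s}=v^{(m)}$); thus the two asserted identities together amount to choosing $T$ with $Tv^{(m)}=0$ and then defining $U$ by the first identity, in which case $U\underline{s}=s_{m-1}$ holds automatically. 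To produce such a $T$ through Lemma~\ref{lem:ft.rt1} I would note that each of the rows $m,\ldots,n-1$ of $A^{(m)}$ annihilates $\underline{s}$, since the corresponding entry of $v^{(m)}$ is zero; taking any $\field{K}$-combination $B$ of them gives a linear row with $B\underline{s}=0$, the lemma returns the unique $T$ with $B=TA^{(m)}$, and then $Tv^{(m)}=(TA^{(m)})\underline{s}=B\underline{s}=0$. Setting $U:=TA^{(m)}-(a_{m-1,j})_{j=m}^n$ finishes both identities.

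The only genuine subtlety is the second identity $Tv^{(m)}=0$. Existence itself is cheap---taking $T=0$ and $U=-(a_{m-1,j})_{j=m}^n$ already works---so I expect the real point to be the latitude in choosing $T$ nonzero while preserving $Tv^{(m)}=0$. As $v^{(m)}$ has its single nonzero entry in the last slot, this latitude is precisely spanned by the rows $m,\ldots,n-1$, and it is this freedom---vacuous exactly in the extremal case $m=n$, where $T=0$ is forced---that is later used to render the induced $(P,Q)$ admissible, as flagged in the preceding remark. I anticipate the index bookkeeping in the block decomposition, and the verification that $A^{(m)}$ is full, to be the places where care is most needed.
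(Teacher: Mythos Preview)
There is a genuine gap: you have lost the requirement that $U$ lie in $\field{K}^{1\times(n+1-m)}$, i.e.\ that $U$ be a \emph{scalar} row. The entries $a_{m-1,j}$ of the system matrix are linear pencils $a_{m-1,j}^{(0)}+\sum_\ell a_{m-1,j}^{(\ell)}x_\ell$, not elements of $\field{K}$. Hence your ``cheap'' solution $T=0$, $U=-(a_{m-1,j})_{j=m}^n$ does \emph{not} satisfy $U\in\field{K}^{1\times(n+1-m)}$, and more generally your definition $U:=TA^{(m)}-(a_{m-1,j})_{j=m}^n$ produces a linear row rather than a scalar one, for every $T$ obtained by your recipe (a $\field{K}$-combination of rows $m,\ldots,n-1$ of $A^{(m)}$). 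Nothing in your argument forces the non-scalar parts of $TA^{(m)}$ and $(a_{m-1,j})_{j=m}^n$ to cancel.

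The hypothesis you never used is precisely what supplies the scalar $U$: by the minimality of $m$, the subfamily $(s_{m-1},s_m,\ldots,s_n)$ is $\field{K}$-linearly dependent while $(s_m,\ldots,s_n)$ is independent, so there exist $\kappa_m,\ldots,\kappa_n\in\field{K}$ with $s_{m-1}=\sum_{j=m}^n\kappa_j s_j$. The paper sets $U:=[\kappa_m,\ldots,\kappa_n]\in\field{K}^{1\times(n+1-m)}$, then $B:=U+(a_{m-1,j})_{j=m}^n$ is a linear row with $B\underline{s}=U\underline{s}+(a_{m-1,j})\underline{s}=s_{m-1}-s_{m-1}=0$ (the second term using your row~$m-1$ identity). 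Now Lemma~\ref{lem:ft.rt1} applied to the tail ALS yields the scalar $T$ with $B=TA^{(m)}$, which is the first identity; and $T v^{(m)}=TA^{(m)}\underline{s}=B\underline{s}=0$ gives the second. Your row~$m-1$ computation is one of the two ingredients needed, but by itself it only gives a relation with \emph{polynomial} coefficients; the $\field{K}$-linear dependence is the other, indispensable, ingredient.
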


\begin{proof}
By assumption, the left subfamily $(s_{m-1}, s_m, \ldots, s_n)$
is $\field{K}$-linearly dependent.
Thus there are $\kappa_m, \ldots, \kappa_n \in \field{K}$ such that
$s_{m-1} = \kappa_m s_m + \kappa_{m+1} s_{m+1} + \ldots + \kappa_n s_n$.
Let $U = [\kappa_m, \kappa_{m+1}, \ldots, \kappa_n ]$.
Then $s_{m-1} - U \underline{s} = 0$.
By assumption $v_{m-1}=0$.
Now we can apply Lemma~\ref{lem:ft.rt1} with
$B = U + [ a_{m-1,m}, a_{m-1,m+1}, \ldots, a_{m-1,n}]$
(and $\underline{s}$).
Hence, there exists a matrix
$T\in \field{K}^{1 \times (n+1-m)}$
such that
\begin{displaymath}
U + 
\begin{bmatrix}
a_{m-1,m} & \ldots & a_{m-1,n}
\end{bmatrix}
- T
\begin{bmatrix}
a_{m,m} & \ldots & a_{m,n} \\
\vdots & \ddots & \vdots \\
a_{n,m} & \ldots & a_{n,n}
\end{bmatrix}
=
\begin{bmatrix}
0 & \ldots & 0 
\end{bmatrix}
\end{displaymath}
holds. Recall that the last column of $T$ is zero,
whence $T(v_i)_{i=m}^n = 0$.
\end{proof}

\begin{lemma}\label{lem:ft.slinin}
Let $p \in \freeALG{\field{K}}{X} \setminus \field{K}$ and
$g \in \field{F} \setminus \field{K}$
be given by the \emph{minimal} admissible linear systems
$A_p = (u_p, A_p, v_p)$ and $A_g = (u_g, A_g, v_g)$
of dimension $n_p$ and $n_g$ respectively
with $1 \in R(g)$.
Then the left family of the ALS $\als{A} = (u,A,v)$
for $pg$ of dimension $n = n_p + n_g -1$ from
Proposition~\ref{pro:ft.mul1} is $\field{K}$-linearly independent.
\end{lemma}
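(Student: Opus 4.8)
The plan is first to read off the left family of the product system \eqref{eqn:ft.mul1.fg} explicitly by exploiting its block-triangular shape, and then to reduce the desired $\field{K}$-linear independence to a single statement about $p$, $g$ and the hypothesis $1\in R(g)$. I would solve $As=v$ for \eqref{eqn:ft.mul1.fg} block by block. The lower $n_g-1$ rows of \eqref{eqn:ft.mul1.fg} coincide with the last $n_g-1$ rows of $A_g s_g=v_g$, so the corresponding components of $s$ reproduce $(s_{g,2},\ldots,s_{g,n_g})$. The top block-row then reads $A_p s^{(1)}+e_{n_p}\lambda_p\bigl(b'\,s^{(2)}+b\,s^{(3)}\bigr)=0$; the first row of $A_g s_g=v_g$ (the row $[1,b',b]$ in \eqref{eqn:ft.mul1.g}) gives $b'\,s^{(2)}+b\,s^{(3)}=-g$, so this becomes $A_p s^{(1)}=e_{n_p}\lambda_p g=v_p g$, whence $s^{(1)}=A_p^{-1}v_p\,g=s_p g$. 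Thus the left family of \eqref{eqn:ft.mul1.fg} is $(s_{p,1}g,\ldots,s_{p,n_p}g,\,s_{g,2},\ldots,s_{g,n_g})$.

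It remains to show these $n_p+n_g-1$ elements are $\field{K}$-linearly independent, that is, $L(p)\,g\cap W=\{0\}$ with $W=\linsp\{s_{g,2},\ldots,s_{g,n_g}\}$. Right multiplication by $g\neq 0$ is injective on the field $\field{F}$, so $\dim_{\field{K}}L(p)\,g=\rank p=n_p$; minimality of $\als{A}_g$ together with Proposition~\ref{pro:ft.cohn94.47} gives $\dim_{\field{K}}W=n_g-1$ and $L(g)=\field{K}g\oplus W$. In the polynomial normal form one has $s_{p,n_p}=\lambda_p$, so $1\in L(p)$ and hence $g=1\cdot g\in L(p)\,g$; this is precisely the left-family element $s_{g,1}=g$ that the type $(*,1)$ construction drops. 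A nonzero element of the intersection would produce a nonzero polynomial $\phi=\sum_i\alpha_i s_{p,i}\in L(p)$ and scalars $\beta_j$ with $\phi g=\sum_{j\ge 2}\beta_j s_{g,j}$, i.e. $c\,s_g=0$ for the row $c=[\phi,-\beta_2,\ldots,-\beta_{n_g}]\in\freeALG{\field{K}}{X}^{1\times n_g}$ against the $\field{K}$-independent left family $s_g=A_g^{-1}v_g$.

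The heart of the matter is to force $\phi\in\field{K}$: once that holds, $c$ is a scalar row annihilating the independent family $s_g$, so $c=0$ by Proposition~\ref{pro:ft.cohn94.47}, giving $\phi=0$, all $\beta_j=0$, and then all $\alpha_i=0$ by independence of $s_p$. Here the hypothesis $1\in R(g)$ enters through the normal form \eqref{eqn:ft.mul1.g}, which exists by Lemma~\ref{lem:ft.for2} and whose first column is $e_1$: if $c$ were affine, Lemma~\ref{lem:ft.rt1} would yield $c=TA_g$ with $T\in\field{K}^{1\times n_g}$, and comparing first entries would give $\phi=(TA_g)_1=T_1\in\field{K}$. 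The main obstacle is that $\phi$ may have degree $\ge 2$, so Lemma~\ref{lem:ft.rt1} does not apply to $c$ as it stands. I expect to remove this by induction on $\dim\als{A}_p$ (equivalently on $\deg\phi$), using the relations $s_{p,i}=-\sum_{j>i}(A_p)_{ij}s_{p,j}$ coming from $A_p s_p=v_p$ to peel off one letter from the top-degree part of $\phi g$ and reduce to the affine case above, arranging the bookkeeping so that the first column $e_1$ of $A_g$ keeps isolating the scalar part of $\phi$ at each step. This degree reduction is the delicate point; the reformulation $L(\phi g)\subseteq L(g)$, giving $\rank(\phi g)\le\rank g=n_g$, offers an alternative route but shifts the same difficulty onto a matching lower bound for $\rank(\phi g)$ that again relies on $1\in R(g)$.
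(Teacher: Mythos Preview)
Your computation of the left family $(s_{p,1}g,\ldots,s_{p,n_p}g,s_{g,2},\ldots,s_{g,n_g})$ and the reduction to showing $L(p)g\cap W=\{0\}$ are correct, as is the observation that if the annihilating row $c=[\phi,-\beta_2,\ldots,-\beta_{n_g}]$ were affine then Lemma~\ref{lem:ft.rt1} together with the first column $e_1$ of $A_g$ would force $\phi\in\field{K}$. But the degree reduction you leave open is the whole difficulty, and neither of your sketched routes closes it. Substituting $s_{p,i}=-\sum_{j>i}(A_p)_{ij}s_{p,j}$ into $\phi=\sum_i\alpha_i s_{p,i}$ replaces scalar coefficients by affine ones, so you have not produced a relation to which an induction hypothesis for a smaller polynomial ALS applies; the ``peeling'' moves the degree from $\phi$ into the coefficients rather than eliminating it. The rank alternative is circular: the lower bound $\rank(\phi g)\ge\rank\phi+\rank g-1$ for a polynomial $\phi$ and $g$ with $1\in R(g)$ is exactly what Lemma~\ref{lem:ft.slinin} (together with its dual Lemma~\ref{lem:ft.tlinin}) is used to establish in Proposition~\ref{pro:ft.minmul}.

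The paper sidesteps the degree problem by never forming the high-degree row $c$ at all. Instead it takes the \emph{minimal} index $m\le n_p$ for which the tail $(s_m,\ldots,s_n)$ is still $\field{K}$-independent and invokes Lemma~\ref{lem:ft.min1}: the upper unitriangular shape of the first $m$ rows of the combined system matrix $A$ (inherited from the polynomial ALS of $p$) together with the lower-left zero block make that lemma applicable, and it produces scalar matrices $T,U$ encoding a transformation $(P,Q)$ under which row $m{-}1$ of $P\als{A}Q$ becomes the trivial equation $s'_{m-1}=0$. Restricting $(P,Q)$ to the upper-left $n_p\times n_p$ block yields a transformation $(\tilde P,\tilde Q)$ of $\als{A}_p$ whose row $m{-}1$ now reads $s^{p}_{m-1}=\alpha\in\field{K}$; since also $s^p_{n_p}=\lambda_p\in\field{K}$, this is a nontrivial $\field{K}$-linear dependence in the left family of the \emph{minimal} system $\als{A}_p$, a contradiction. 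In your language: rather than trying to show an arbitrary $\phi$ is scalar, choose the dependence with the largest leading index and replace your high-degree row $[\phi,-\beta]$ by row $m{-}1$ of $A$, which \emph{is} affine and therefore amenable to Lemma~\ref{lem:ft.rt1}.
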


\begin{proof}
Without loss of generality assume $v = [0,\ldots,0,1]\trp$,
$\als{A}_p$ in polynomial form
(by \cite[Proposition~2.1~(ii)]{Cohn1999a}
\ every polynomial admits a linear representation
with upper unitriangular system matrix)
and $A_g$ with first
column $[1,0,\ldots,0]\trp$.
Let $s_p = (s^p_1, \ldots, s^p_{n_p})$ and
$s_g = (s^g_1, \ldots, s^g_{n_g})$ be the respective
left family of $\als{A}_p$ and $\als{A}_g$.
We have to show that the left family
\begin{displaymath}
s = (s_1, s_2, \ldots, s_n)
  = (s^p_1 g, \ldots, s^p_{n_p-1}g, g, s^g_2, \ldots, s^g_{n_g}).
\end{displaymath}
of $\als{A}$ is $\field{K}$-linearly independent.
Assume to the contrary that there is an index $1 < m \le n_p$
such that $(s_{m-1}, s_m, \ldots, s_n)$ is $\field{K}$-linearly dependent
while $(s_m,\ldots,s_n)$ is $\field{K}$-linearly independent.
Then, by Lemma~\ref{lem:ft.min1},
there exist matrices $T,U \in \field{K}^{1 \times (n-m+1)}$
as blocks in (invertible) matrices $P,Q \in \field{K}^{n \times n}$,
\begin{displaymath}
P = 
\begin{bmatrix}
I_{m-2} & . & . \\
. & 1 & T \\
. & . & I_{n-m+1}
\end{bmatrix}
\quad\text{and}\quad
Q =
\begin{bmatrix}
I_{m-2} & . & . \\
. & 1 & U \\
. & . & I_{n-m+1}
\end{bmatrix},
\end{displaymath}
that yield equation $s_{m-1}=0$ (in row~$m-1$) in $P\als{A}Q$.
(This ``potential'' transformation $(P,Q)$ is not
necessarily admissible. But this is not an issue here,
since we are only checking linear independence
of the left family.)
Let $\tilde{P}$ (respectively $\tilde{Q}$) be the upper left part
of $P$ (respectively $Q$) of size $n_g \times n_g$.
Then the equation in row~$m-1$ in $\tilde{P} \als{A}_p \tilde{Q}$
is $s^p_{m-1} = \alpha \in \field{K}$, contradicting $\field{K}$-linear
independence of the left family of $\als{A}_p$ since
$s^p_{n_p} = \lambda \in \field{K}$.
\end{proof}

\begin{remark}
Nothing can be said about minimality of $\als{A}$ since
the right family $t = u A\inv$ could be $\field{K}$-linearly
dependent. As an example take $p=xy$ and $g = y\inv + z$.
An ALS for $pg = x + xyz$ constructed by Proposition~\ref{pro:ft.mul1} is
\begin{displaymath}
\begin{bmatrix}
1 & - x & . & . & . \\
. & 1 & -y & . & . \\
. & . & 1 & 1 & -z \\
. & . & . & y & 1 \\
. &. & . & . & 1
\end{bmatrix}
s =
\begin{bmatrix}
. \\ . \\ . \\ . \\ 1
\end{bmatrix}.
\end{displaymath}
The right family is $t = [1, x, xy, x, x+xyz]$.
\end{remark}

\begin{lemma}\label{lem:ft.tlinin}
Let $f \in \field{F} \setminus \field{K}$
and $q \in \freeALG{\field{K}}{X} \setminus \field{K}$
be given by the \emph{minimal} admissible linear systems
$A_f = (u_f, A_f, v_f)$ and $A_q = (u_q, A_q, v_q)$
of dimension $n_f$ and $n_q$ respectively
with $1 \in L(f)$.
Then the right family of the ALS $\als{A} = (u,A,v)$
for $fq$ of dimension $n = n_f + n_q -1$ from
Proposition~\ref{pro:ft.mul2} is $\field{K}$-linearly independent.
\end{lemma}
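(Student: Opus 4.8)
This lemma is the exact left--right dual of Lemma~\ref{lem:ft.slinin}, so rather than re-run that argument with the right family in the role of the left family, my plan is to \emph{reduce} it to Lemma~\ref{lem:ft.slinin} by transposition. Let $h\mapsto h^*$ be the $\field{K}$-linear anti-automorphism of $\freeALG{\field{K}}{X}$ that reverses each word, extended in the usual way to the free field $\field{F}$. Given a linear representation $(u,A,v)$ of $h\in\field{F}$, I claim the transposed triple $(v\trp,A\trp,u\trp)$ (with the genuine inverse of $A\trp$ over $\field{F}$) is a linear representation of $h^*$. Indeed, because $A$ is \emph{linear}, applying $*$ entrywise leaves it unchanged (each letter satisfies $x_\ell^*=x_\ell$ and the $A_\ell$ lie over $\field{K}$), so the matrix anti-automorphism $M\mapsto(M^*)\trp$ sends $A$ to $A\trp$; as $*$ reverses products this gives $h^*=(uA\inv v)^*=v\trp(A\trp)\inv u\trp$, and $A\trp$ is again linear and invertible over $\field{F}$.

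I then read off the effect of this transposition. By Proposition~\ref{pro:ft.cohn94.47} minimality is the simultaneous $\field{K}$-linear independence of the left and right family; transposition interchanges these two families up to the $\field{K}$-linear bijection $h\mapsto h^*$ of $\field{F}$ (the left family of $(v\trp,A\trp,u\trp)$ consists of the $*$-images of the right-family entries of $(u,A,v)$, and symmetrically), so it preserves minimality and $\rank$. It interchanges the element types, so $1\in L(f)$ becomes $1\in R(f^*)$; it sends a polynomial ALS to a polynomial ALS (after conjugating by the reversal permutation $\perm$ to restore upper-triangular shape), so $q$ polynomial gives $q^*$ polynomial; and it interchanges the normalized multiplication shapes, carrying form~\eqref{eqn:ft.mul2.f} to form~\eqref{eqn:ft.mul1.g} --- precisely the duality between Lemma~\ref{lem:ft.for1} and Lemma~\ref{lem:ft.for2}.

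With this dictionary the reduction is immediate. Put $p:=q^*\in\freeALG{\field{K}}{X}\setminus\field{K}$ and $g:=f^*\in\field{F}\setminus\field{K}$; then $1\in R(g)$, and, up to conjugation by $\perm$, transposing the ALS $\als{A}$ for $fq$ from Proposition~\ref{pro:ft.mul2} in~\eqref{eqn:ft.mul2.fg} yields exactly the ALS for $(fq)^*=q^*f^*=pg$ built by Proposition~\ref{pro:ft.mul1} in~\eqref{eqn:ft.mul1.fg}. By Lemma~\ref{lem:ft.slinin} the \emph{left} family of that ALS is $\field{K}$-linearly independent. Since transposition interchanges left and right family (up to $*$), $\perm$ only reorders, and $*$ is a $\field{K}$-linear bijection of $\field{F}$ and hence preserves linear (in)dependence, the right family of the original $\als{A}$ for $fq$ is $\field{K}$-linearly independent, as claimed.

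The one non-formal step, and the part I expect to be the main obstacle, is the block-by-block check underlying the reduction: that the transpose of the system matrix in~\eqref{eqn:ft.mul2.fg}, after conjugation by $\perm$, is literally the matrix in~\eqref{eqn:ft.mul1.fg} --- that the coupling blocks $\lambda_f b\,u_g$ and $\lambda_f b''u_g$ transpose and reindex into the $e_{n_f}\lambda_f b'$ and $e_{n_f}\lambda_f b$ blocks, and that the normalizations $u=e_1$ and $v=[\,\ldots,\lambda\,]\trp$ match up. If one prefers to avoid this bookkeeping, the alternative is a direct mirror of Lemma~\ref{lem:ft.slinin}: one computes the right family of $\als{A}$ to be $t=(t^f_1,\ldots,t^f_{n_f-1},\,f,\,f t^q_2,\ldots,f t^q_{n_q})$, whose head is (up to scaling its last entry, since $f=\lambda_f t^f_{n_f}$) the right family of $f$ and hence independent by minimality, so any dependency must involve a tail entry $f t^q_m$ with $m\ge 2$; the role of Lemma~\ref{lem:ft.min1} is then taken by its right-hand analog, obtained from Lemma~\ref{lem:ft.rt2} exactly as Lemma~\ref{lem:ft.min1} follows from Lemma~\ref{lem:ft.rt1}, and projecting the resulting equation onto the $q$-block contradicts $\field{K}$-linear independence of the right family of $\als{A}_q$.
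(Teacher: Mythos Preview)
The paper gives no proof of this lemma; it is stated immediately after Lemma~\ref{lem:ft.slinin} as its evident left--right dual, exactly as Lemma~\ref{lem:ft.rt2} and Lemma~\ref{lem:ft.for2} are stated without proof as the duals of Lemma~\ref{lem:ft.rt1} and Lemma~\ref{lem:ft.for1}. Both routes you propose are correct.

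The direct mirror you sketch at the end is precisely what the paper intends. Lemma~\ref{lem:ft.rt2} is recorded for exactly this purpose: it yields the right-family analogue of Lemma~\ref{lem:ft.min1}, and your outline --- compute the right family of $\als{A}$ as $(t^f_1,\ldots,t^f_{n_f-1},f,\,f t^q_2,\ldots,f t^q_{n_q})$, note that the first $n_f$ entries are (up to scaling the last one) the right family of $\als{A}_f$, locate the extremal dependent index in the tail, apply the transformation coming from the dual of Lemma~\ref{lem:ft.min1}, project onto the lower-right $A_q$-block, and obtain a second scalar entry in the right family of $\als{A}_q$ (besides $t^q_1=1$) contradicting its $\field{K}$-linear independence --- is the word-for-word dual of the proof of Lemma~\ref{lem:ft.slinin}.

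Your anti-automorphism reduction is a more conceptual alternative the paper does not use. It buys a genuine reduction to Lemma~\ref{lem:ft.slinin} rather than a rerun of its argument, at the cost of the block bookkeeping you flag. That bookkeeping does go through: writing $\hat A_f$ and $\hat b_f$ for the upper $(n_f-1)$ rows of $A_f$, one has $\perm_n A\trp\perm_n=\bigl[\begin{smallmatrix}\perm A_q\trp\perm & \lambda_f\,e_{n_q}(\hat b_f\trp\perm)\\ 0 & \perm\hat A_f\trp\perm\end{smallmatrix}\bigr]$, which after the obvious diagonal rescaling is exactly the system matrix of~\eqref{eqn:ft.mul1.fg} for $q^*f^*$ built from the polynomial ALS $\perm A_q\trp\perm$ for $q^*$ and the ALS $\perm A_f\trp\perm$ for $f^*$ (whose first column is $e_1$). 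Since this identification is by invertible scalar matrices, independence of the left family transfers.
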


\begin{proposition}[Minimal Polynomial Multiplication
\protect{\cite[Proposition~28]{Schrempf2017b9}
}]\label{pro:ft.minmul}
Let $p,q \in \freeALG{\field{K}}{X}$ be given by the
\emph{minimal} polynomial admissible linear systems
$A_p = (1, A_p, \lambda_p)$ and
$A_q = (1, A_q, \lambda_q)$ of dimension $n_p,n_q \ge 2$ respectively.
Then the ALS $\als{A}$ from Proposition~\ref{pro:ft.mul2} for $pq$
is \emph{minimal} of dimension $n = n_p + n_q - 1$.
\end{proposition}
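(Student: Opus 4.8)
The plan is to verify minimality through the criterion of Proposition~\ref{pro:ft.cohn94.47}: the representation $\als{A}$ is minimal exactly when both its left family and its right family are $\field{K}$-linearly independent. Since $p$ and $q$ are polynomials they are of type $(1,1)$, so $1 \in L(p)$ and $1 \in R(q)$; these are precisely the hypotheses required to feed the two preparatory lemmas extracted above. Note also that $n_p,n_q \ge 2$ guarantees $p,q \notin \field{K}$, as those lemmas demand.

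For the right family I would apply Lemma~\ref{lem:ft.tlinin} directly. Taking $f = p$ (legitimate because $1 \in L(p)$) and the polynomial $q$, the lemma asserts that the right family of the ALS produced by Proposition~\ref{pro:ft.mul2}---and this is exactly our $\als{A}$---is $\field{K}$-linearly independent, with no further work needed.

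For the left family I would first read off the solution $s = A\inv v$ of $\als{A}s = v$. Solving the trailing $A_q$-block yields the last $n_q$ entries as the left family $(s^q_1, \ldots, s^q_{n_q})$ of $q$, with $s^q_1 = q$; substituting $u_q A_q\inv v_q = q$ into the two upper blocks then shows the leading $n_p - 1$ entries are $s^p_1 q, \ldots, s^p_{n_p-1}q$. Thus the left family of $\als{A}$ is $(s^p_1 q, \ldots, s^p_{n_p-1}q, q, s^q_2, \ldots, s^q_{n_q})$, which is precisely the family that Lemma~\ref{lem:ft.slinin}---read with $g = q$, admissible since $1 \in R(q)$---declares $\field{K}$-linearly independent. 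With both families independent, Proposition~\ref{pro:ft.cohn94.47} forces minimality, and the dimension $n = n_p + n_q - 1$ is immediate from the construction.

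The hard part will be the bookkeeping at the point where Lemma~\ref{lem:ft.slinin} is invoked: that lemma is phrased for the system coming from Proposition~\ref{pro:ft.mul1}, whereas $\als{A}$ is built by Proposition~\ref{pro:ft.mul2}. The resolution is that linear independence is a property of the family of field elements, not of the particular system that generates it, and the explicit computation above exhibits the left family of $\als{A}$ as the very sequence $(s^p_1 q, \ldots, s^p_{n_p-1}q, q, s^q_2, \ldots, s^q_{n_q})$ treated in Lemma~\ref{lem:ft.slinin} (the $n_p$-th entry being the scalar-free copy $q = s^q_1$ in both constructions). Once this identification is made explicit, both halves of the minimality criterion are delivered by the preparatory lemmas and the argument closes.
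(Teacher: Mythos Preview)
Your proposal is correct and follows the same approach as the paper: invoke Lemma~\ref{lem:ft.slinin} for the left family, Lemma~\ref{lem:ft.tlinin} for the right family, and conclude minimality via Proposition~\ref{pro:ft.cohn94.47}. In fact you are slightly more careful than the paper, which applies Lemma~\ref{lem:ft.slinin} (stated for the system of Proposition~\ref{pro:ft.mul1}) to the system built by Proposition~\ref{pro:ft.mul2} without comment; your explicit identification of the left family $(s^p_1 q,\ldots,s^p_{n_p-1}q,\,q,\,s^q_2,\ldots,s^q_{n_q})$ and the observation that linear independence depends only on this tuple of field elements is exactly what closes that small gap.
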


\begin{proof}
The left family of $\als{A}$ is $\field{K}$-linearly independent by
Lemma~\ref{lem:ft.slinin} and its right family is $\field{K}$-linearly
independent by Lemma~\ref{lem:ft.tlinin}.
Whence $\als{A}$ is minimal (by Proposition~\ref{pro:ft.cohn94.47})
and by construction in polynomial form.
\end{proof}

\begin{theorem}[Minimal Inverse
\protect{\cite[Theorem~4.13]{Schrempf2017a9}
}]\label{thr:ft.mininv}
Let $f \in \field{F} \setminus \field{K}$ be given by
the \emph{minimal} admissible linear system $\als{A} = (u, A, v)$
of dimension $n$.
Then a \emph{minimal} ALS for $f\inv$ is given
in the following way:

\smallskip\noindent
$f$ of type $(1,1)$ yields $f\inv$ of type $(0,0)$ with $\dim(\als{A}') = n-1$:
\begin{equation}\label{eqn:ft.inv3b}
\als{A}'=\left(1, 
\begin{bmatrix}
-\lambda \perm b'' & -\perm B \perm \\
-\lambda b & - b'\perm
\end{bmatrix},
1 \right) \quad\text{for}\quad
\als{A} = \left(1, 
\begin{bmatrix}
1 & b' & b \\
. & B & b'' \\
. & . & 1
\end{bmatrix},
\lambda \right).
\end{equation}
$f$ of type $(1,0)$ yields $f\inv$ of type $(1,0)$ with $\dim(\als{A}') = n$:
\begin{equation}\label{eqn:ft.inv2b}
\als{A}' = \left(1,
\begin{bmatrix}
1 & - \frac{1}{\lambda} c & - \frac{1}{\lambda} c' \perm \\
. & -\perm b'' & -\perm B \perm \\
. & -b & -b'\perm 
\end{bmatrix},
1 \right)
\quad\text{for}\quad
\als{A} = \left(1,
\begin{bmatrix}
1 & b' & b \\
. & B & b'' \\
. & c' & c
\end{bmatrix},
\lambda \right).
\end{equation}
$f$ of type $(0,1)$ yields $f\inv$ of type $(0,1)$ with $\dim(\als{A}') = n$:
\begin{equation}\label{eqn:ft.inv1b}
\als{A}' = \left( 1,
\begin{bmatrix}
-\lambda \perm b'' & -\perm B \perm & -\perm a' \\
-\lambda b & -b'\perm & -a \\
. & . & 1 
\end{bmatrix},
1 \right)
\quad\text{for}\quad
\als{A} = \left(1,
\begin{bmatrix}
a & b' & b \\
a' & B & b'' \\
. & . & 1
\end{bmatrix},
\lambda \right).
\end{equation}
$f$ of type $(0,0)$ yields $f\inv$ of type $(1,1)$ with $\dim(\als{A}') = n+1$:
\begin{equation}\label{eqn:ft.inv0}
\als{A}' = \left(1,
\begin{bmatrix}
\perm v & -\perm A \perm \\
. & u \perm
\end{bmatrix},
1 \right).
\end{equation}
(Recall that the permutation matrix $\perm$ reverses the order of rows/columns.)
\end{theorem}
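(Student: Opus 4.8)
The plan is to treat the naive inverse of Proposition~\ref{pro:ft.ratop} as the common starting point and to read off everything from its two families. First I would solve the $(n+1)$-dimensional inverse system explicitly: writing its solution vector as $[\sigma,\bar s\,]\trp$ and its covector-solution as $[\bar t,\tau]$, the block shape $\bigl[\begin{smallmatrix} -v & A \\ . & u\end{smallmatrix}\bigr]$ forces $\bar s = \sigma A\inv v$ with $\sigma = f\inv$, and $\bar t = -\tau uA\inv$ with $\tau = f\inv$. Hence the left family of the naive inverse is $(f\inv,\,f\inv s_1,\ldots,f\inv s_n)$ and its right family is $(-f\inv t_1,\ldots,-f\inv t_n,\,f\inv)$, where $s=A\inv v$ and $t=uA\inv$ are the families of $\als{A}$. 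Since $f\inv\neq 0$, $\field{K}$-independence of these is equivalent to independence of $(1,s_1,\ldots,s_n)$ and of $(t_1,\ldots,t_n,1)$, respectively.

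This one computation settles type $(0,0)$ outright. By Proposition~\ref{pro:ft.cohn94.47} the naive inverse is minimal exactly when both families are independent, i.e.\ when $1\notin L(f)$ and $1\notin R(f)$, which is type $(0,0)$; then $\rank f\inv = n+1$. Moreover $f=s_1\in L(f)$ and $f=\sum_j t_j v_j\in R(f)$ hold for \emph{every} element, so $1=f\inv f$ lies in both $L(f\inv)$ and $R(f\inv)$ and $f\inv$ is of type $(1,1)$. Finally the naive inverse coincides, up to an admissible reordering by $\perm$ (a routine sign-and-permutation check), with \eqref{eqn:ft.inv0}.

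The remaining cases are reductions of the same naive inverse, prepared by the normal-form lemmas. When $1\in R(f)$ I would first apply Lemma~\ref{lem:ft.for2} to make the first column of $A$ equal to $e_1$, and when $1\in L(f)$ apply Lemma~\ref{lem:ft.for1} to make the last row $[0,\ldots,0,1]$ with $v=[0,\ldots,0,\lambda]\trp$; type $(1,1)$ uses both normalizations. Each normalization produces exactly one dependency in the matching family of the naive inverse---a right-family dependency from $1\in R(f)$ and a left-family dependency from $1\in L(f)$---which I would clear by a single elementary operation together with the deletion of the freed row--column pair. This yields \eqref{eqn:ft.inv2b} for type $(1,0)$ and \eqref{eqn:ft.inv1b} for type $(0,1)$ (one deletion, dimension $n$), and \eqref{eqn:ft.inv3b} for type $(1,1)$ (two deletions, dimension $n-1$). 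Minimality of each reduced system I would then verify directly through Proposition~\ref{pro:ft.cohn94.47}, recomputing its shorter families from the block form, and the type of $f\inv$ reads off from whether the constant $1$ survives in the reduced spans.

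The hard part is precisely this last verification for the reduced forms, because the families of the non-minimal $(n+1)$-dimensional inverse \emph{overcount}. For instance, in type $(1,0)$ its left family spans an $(n+1)$-dimensional space while $L(f\inv)$ has dimension $n$, so clearing the right-family dependency must simultaneously drop the left span by one; I therefore cannot inherit spans from the naive families but must check, coordinate by coordinate, that each deletion removes a genuine dependency and nothing more. Exhibiting the dependency explicitly helps: from $1=\sum_j c_j t_j$ one gets $f\inv = -\sum_j c_j(-f\inv t_j)$, which is the single relation to be cleared. A clean organizing cross-check is the involution $f\mapsto f\inv$: the type-$(1,1)$ construction \eqref{eqn:ft.inv3b} is the type-$(0,0)$ construction \eqref{eqn:ft.inv0} read backwards, so Case~$(1,1)$ follows from the already-settled Case~$(0,0)$, while Cases~$(1,0)$ and $(0,1)$ are mutually inverse; verifying that the four displayed formulas compose to the identity pins down the remaining ranks and types, confirming the duality $1\in R(f\inv)\Leftrightarrow 1\notin L(f)$ and $1\in L(f\inv)\Leftrightarrow 1\notin R(f)$.
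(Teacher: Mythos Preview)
The paper does not actually prove this theorem here; it is quoted from \cite{Schrempf2017a9} and stated without proof. Your plan is the natural one and is almost certainly what the cited proof does: compute the left and right families of the naive $(n{+}1)$-dimensional inverse from Proposition~\ref{pro:ft.ratop}, observe via Proposition~\ref{pro:ft.cohn94.47} that this system is already minimal precisely when $1\notin L(f)$ and $1\notin R(f)$ (type $(0,0)$), and for the remaining types pre-normalize $\als{A}$ via Lemma~\ref{lem:ft.for1} and/or Lemma~\ref{lem:ft.for2} so that the redundancy in the naive inverse becomes a single explicit relation that can be cleared by one elementary row/column deletion. The displayed block forms \eqref{eqn:ft.inv3b}--\eqref{eqn:ft.inv0} are exactly what this produces after the cosmetic $\perm$-reordering, and your family computation $(f\inv,f\inv s_1,\ldots,f\inv s_n)$, $(-f\inv t_1,\ldots,-f\inv t_n,f\inv)$ is correct.

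One slip in your cross-check paragraph: you write ``Cases $(1,0)$ and $(0,1)$ are mutually inverse'' under the involution $f\mapsto f\inv$. They are not---each is \emph{self}-inverse, as the theorem states. You may be conflating the involution with the left--right duality under which \eqref{eqn:ft.inv2b} and \eqref{eqn:ft.inv1b} are structural mirrors of one another. The involution argument does pin down all four ranks cleanly (your constructions give the upper bounds; the universal inequality $\rank f\inv\ge\rank f-1$, which follows by applying the naive inverse to $f\inv$, gives the lower bound; and equality $\rank f\inv=\rank f-1$ forces $f\inv$ to be of type $(0,0)$ by what you have already shown). But the involution alone cannot distinguish whether $f\inv$ is of type $(1,0)$ or $(0,1)$ when $f$ is; for that you genuinely need the direct family check on the reduced system that you flag as ``the hard part''. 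So treat the involution as a consistency test, not a substitute for that verification.
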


\begin{corollary}\label{cor:ft.rank1}
Let $0 \neq f \in \field{F}$.
Then $f \in \field{K}$ if and only if $\rank(f) = \rank(f\inv) = 1$.
\end{corollary}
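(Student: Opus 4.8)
The forward implication I would dispose of in a single line. For a nonzero $\mu \in \field{K}$ the triple $(1,[1],\mu)$ is an admissible linear system of dimension $1$ representing $\mu$, and since the empty representation represents only $0 \in \field{F}$, no nonzero element can have rank~$0$; hence $\rank(\mu)=1$, and likewise $\rank(\mu^{-1})=1$ because $\mu^{-1}$ is again a nonzero element of $\field{K}$. So $f \in \field{K}$ forces $\rank(f)=\rank(f^{-1})=1$.

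The content is the reverse implication, and my plan is to read off the structure of a rank-one element and then apply the minimal inverse (Theorem~\ref{thr:ft.mininv}). First I would note that $\rank(f)=1$ means $f$ has a minimal ALS $(1,[A],\lambda)$ of dimension~$1$, where $A = a_0 + a_1 x_1 + \ldots + a_d x_d$ with $a_i \in \field{K}$ is nonzero (fullness of a $1\times 1$ matrix is just non-vanishing, and such an $A$ is invertible over $\field{F}$) and $\lambda \in \field{K}$ is nonzero because $f \neq 0$. Thus $f = \lambda A^{-1}$, its left family is $\{f\}$ and its right family is $\{A^{-1}\}$ (Definition~\ref{def:ft.family}).

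Next I would split on whether $A$ is constant. If $a_1 = \ldots = a_d = 0$, then $f = \lambda/a_0 \in \field{K}$ and we are done. Otherwise $A$ has degree exactly $1$, so $f \notin \field{K}$, and neither $f = \lambda A^{-1}$ nor $A^{-1}$ lies in $\field{K}$; consequently $1 \notin L(f)$ and $1 \notin R(f)$, i.e.\ $f$ is of type $(0,0)$. Now Theorem~\ref{thr:ft.mininv} applies, its hypothesis $f \in \field{F}\setminus\field{K}$ being met, and in the type $(0,0)$ case it produces a \emph{minimal} ALS for $f^{-1}$ of dimension $n+1 = 2$, whence $\rank(f^{-1})=2$. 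This contradicts the standing hypothesis $\rank(f^{-1})=1$, so $A$ must be constant and $f \in \field{K}$.

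The one point demanding care---the main obstacle---is the type classification of the dimension-one element: I must be certain that a non-scalar $f$ of rank~$1$ is genuinely of type $(0,0)$, so that the dimension-\emph{increasing} branch of the minimal inverse is the relevant one, rather than some other type for which the inverse would keep dimension~$1$. This reduces to the two equivalences ``$1 \in R(f) \iff A \in \field{K}$'' and ``$1 \in L(f) \iff f \in \field{K}$'' for a dimension-one representation, both of which follow immediately from $R(f)=\linsp\{A^{-1}\}$ and $L(f)=\linsp\{f\}$ together with $\field{K}$-linear independence of a one-element family (Proposition~\ref{pro:ft.cohn94.47}). Note that both hypotheses are genuinely used: $\rank(f)=1$ supplies the structure $f=\lambda A^{-1}$, while $\rank(f^{-1})=1$ delivers the contradiction, so no appeal to the symmetry $(f^{-1})^{-1}=f$ is needed.
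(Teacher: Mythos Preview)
Your proof is correct and follows precisely the route the paper intends: the corollary is stated without proof immediately after Theorem~\ref{thr:ft.mininv}, and your argument simply makes explicit the implicit reasoning---a rank-one non-scalar element must be of type $(0,0)$, whence the minimal inverse forces $\rank(f^{-1})=2$. The paper's own remark that $n\ge 2$ for types $(1,1)$, $(1,0)$, $(0,1)$ is the complementary observation that would let one phrase the same argument as ``for $f\notin\field{K}$, in every type either $\rank(f)\ge 2$ or $\rank(f^{-1})\ge 2$''; your version isolates the rank-one case directly, which is equally valid and arguably cleaner.
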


\begin{remark}
This simple consequence of Theorem~\ref{thr:ft.mininv} makes it possible
to distinguish between \emph{trivial} units (non-zero scalar elements)
and \emph{non-trivial} units, that is, elements in $\field{F} \setminus \field{K}$.
The main idea in the factorization theory
in Section~\ref{sec:ft.ft} is to allow only (the insertion of) \emph{trivial}
units (in factorizations). It is used explicitly in Lemma~\ref{lem:ft.special}
and implicitly in Theorem~\ref{thr:ft.ldivs}.
\end{remark}

\begin{remark}
Note that $n\ge 2$ for type $(1,1)$, $(1,0)$ and $(0,1)$.
The block $B$ is always square of size $n-2$.
For $n=2$ the system matrix of $\als{A}$ is
\begin{itemize}
\item
  $\bigl[\begin{smallmatrix} 1 & b \\ . & 1 \end{smallmatrix}\bigr]$
  for type $(1,1)$,
\item
  $\bigl[\begin{smallmatrix} 1 & b \\ . & c \end{smallmatrix}\bigr]$
  for type $(1,0)$ and
\item
  $\bigl[\begin{smallmatrix} a & b \\ . & 1 \end{smallmatrix}\bigr]$
  for type $(0,1)$.
\end{itemize}
\end{remark}

\section{Factorization Theory}\label{sec:ft.ft}

To compensate the lack of non-zero non-units in 
$\field{F} = \freeFLD{\field{K}}{X}$,
that is, $\field{F}\setminus \{ 0\} = \field{F}^\bullet = \field{F}^\times
  = \{ f \in \field{F} \mid f \text{ is invertible} \}$,
we will view the elements in terms of their \emph{minimal}
linear representations. Recall that the dimension of a minimal one
of $f \in \field{F}$ defines the rank of $f$.

Firstly, in Definition~\ref{def:ft.factors},
we define \emph{factors} based on the rank.
Although this definition would suffice to define divisibility
for polynomials, it is too rigid in general.
Since this is far from obvious it is explained in detail
in an example before Definition~\ref{def:ft.divisors}
(left and right divisibility).
Secondly, some preparation is necessary to be able to exclude the
insertion of non-trivial units. This is the essence of Lemma~\ref{lem:ft.special}.
Finally, Theorem~\ref{thr:ft.ldivs} yields, as the main result,
the equivalence of the ``classical'' divisibility (in free associative algebras)
and the new one (for the free field) for polynomials.

\medskip
For a factorization of a (non-zero) polynomial $p = q_1 q_2 \cdots q_m$
into atoms $q_i$ we would like to have a factorization of its
inverse $p\inv = (q_1 q_2 \cdots q_m)\inv = q_m\inv \cdots q_2\inv q_1\inv$
into atoms $q_i\inv$.
For two polynomials $p,q$ we have
---due to the minimal polynomial multiplication---
$\rank(p)+\rank(q) = \rank(pq)+1$.
Recalling Definition~\ref{def:ft.lrcop} we
have $p \ldivs h$ if $h = pq$
for some $q \in \freeALG{\field{K}}{X}$.
The minimal inverse type $(1,1)$ yields
\begin{displaymath}
\rank(q\inv) + \rank(p\inv) = \rank(q)-1 + \rank(p)-1 = \rank(q\inv p\inv),
\end{displaymath}
or $q\inv$ ``left divides'' $h\inv$
for $h = pq$.
See Proposition~\ref{pro:ft.minmul}, Theorem~\ref{thr:ft.mininv}
and Lemma~\ref{lem:ft.rank}.
To avoid inserting non-trivial units from $\field{F}\setminus\field{K}$,
we have to bound the sum of the ranks of the two factors:
$\rank(p x) + \rank(x\inv q) = \rank(pq) + 2$.

\begin{definition}[Left and Right Factors]\label{def:ft.factors}
Let $h \in \Fbullet = \field{F}^\bullet$ be given.
An element $f\in \Fbullet$ is called
\emph{left factor} of $h$ if
\begin{align*}
\rank(f) + \rank (f\inv h) &\le \rank(h) + 1
  \quad\text{and} \\
\rank(h\inv f) + \rank(f\inv) &\le \rank(h\inv) + 1.
\end{align*}
An element $g\in \Fbullet$ is called
\emph{right factor} of $h$ if
\begin{align*}
\rank (h g\inv) + \rank(g) &\le \rank(h) + 1
  \quad\text{and} \\
\rank (g\inv) + \rank(g h\inv) &\le \rank(h\inv) + 1.
\end{align*}
Scalars and scalar multiples of $h$ are called \emph{trivial} factors.
Non-trivial left/right factors are called \emph{proper}.
Left and right factors are also called \emph{outer}
to distinguish them from (general) factors of a factorization.
\end{definition}

\begin{remark}
Straight away we have that $f$ is a \emph{left factor}
of $h$ if and only if $g =f\inv h$ is a \emph{right factor} of $h$.
And $f$ is a \emph{left factor} of $h$ if and only if
$f\inv$ is a \emph{right factor} of $h\inv$.
\end{remark}

For two polynomials $p$ and $q$ the previous definition
tells us that $p$ (respectively $q$) is a left (respectively right)
factor of $pq$.
However, in general $f$ is \emph{not}
a left factor of $h = f g$. As an example take $f = (xyz)\inv$ and
$g= x$. Then
\begin{align*}
\rank(f) + \rank(g)
  &= \rank(z\inv y\inv x\inv) + \rank(x) \\
  &= 3 + 2 \\
  &> \rank(z\inv y\inv) + 1.
\end{align*}
While here it is easy to see that $f\inv$ and $g$ have a
non-trivial left divisor in $\freeALG{\field{K}}{X}$
(in the sense of Definition~\ref{def:ft.lrcop}),
this can be much more delicate in general,
illustrated in Example~\ref{ex:ft.factors}.
This example will also show that the definition
of outer factors is rather restrictive
and not applicable directly. Later \emph{left} and
\emph{right divisors} will be defined more generally
in such a way that outer factors can be
``split off'' in at least one possible sequence
(see Definition~\ref{def:ft.divisors}).
Although we will see later that this is a generalization of the factorization
in the free associative algebra, it is much more difficult
to apply for two reasons: One has to test \emph{all}
possible ``sequences'' of factorizations to get the atoms
(up to ``similarity'').
And the invertibility
of the transformation matrices ---to admissibly transform the
ALS in such a way that the factors can be ``extracted''---
has to be ensured by including
a condition for non-vanishing determinant. The latter might
restrict practical applications to rank $\le 6$, similar to the
test if a matrix is full 
\cite{Janko2018a}
. Section~\ref{sec:ft.mf} provides further details.
Experiments show that testing (ir)reducibility of
polynomials (using polynomial admissible linear systems)
works practically for rank $\le 12$,
in some cases up to rank $\le 17$
\cite{Janko2018a}
.

\ifJOURNAL
\else
\newpage
\fi
\begin{Example}\label{ex:ft.factors}
Let $f = f_1 f_2 f_3$, with $f_1 = (xy)\inv$, $f_2 = 1-xz$ and $f_3 = (yz)\inv$,
be given by the \emph{minimal} ALS
\begin{equation}\label{eqn:ft.factors1}
\begin{bmatrix}
y & -1 & z & 0 \\
. & x & -1 & 0 \\
0 & 0 & z & -1 \\
0 & 0 & . & y
\end{bmatrix}
s =
\begin{bmatrix}
0 \\ 0 \\ 0 \\ 1
\end{bmatrix}.
\end{equation}
Then it is immediate (after recalling the construction
of an ALS for the product from Proposition~\ref{pro:ft.ratop})
that $f_3$ is a right factor of $f$
by duplicating $s_3$, that is, inserting a ``dummy'' row
(between row~2 and~3):
\begin{displaymath}
\begin{bmatrix}
y & -1 & z & 0 & . \\
. & x & -1 & 0 & . \\
0 & 0 & 1 & -1 & 0 \\
. & . & 0 & z & -1 \\
. & . & 0 & . & y
\end{bmatrix}
s' =
\begin{bmatrix}
. \\ . \\ 0 \\ . \\ 1
\end{bmatrix},
\quad
s' =
\begin{bmatrix}
s_1 \\ s_2 \\ s_3 \\ s_3 \\ s_4
\end{bmatrix}.
\end{displaymath}
Thus, if a minimal ALS $\als{A} = (u,A,v)$ for $f$
is not of the form
\eqref{eqn:ft.factors1}, we need to find an
admissible transformation $(P,Q)$ such that
the (transformed) system matrix $PAQ$ has
a lower left zero block of size $2 \times 2$
and an upper right zero block of size $2 \times 1$
and only the last component of the right hand side
$Pv$ is non-zero,
to detect the right factor $f_3$.
Similarly, subtracting row~3 from~1 and adding column~2 to~4
in \eqref{eqn:ft.factors1} yields
\begin{displaymath}
\begin{bmatrix}
y & -1 & 0 & 0 \\
. & x & -1 & x \\
0 & 0 & z & -1 \\
0 & 0 & . & y
\end{bmatrix}
s =
\begin{bmatrix}
0 \\ 0 \\ . \\ 1
\end{bmatrix}.
\end{displaymath}
Compare with Figure~\ref{fig:ft.minmul}, page~\pageref{fig:ft.minmul},
$k=2$ in type $(*,1)$.
By duplicating $t_2$, that is, inserting a ``dummy'' column
(between column~2 and~3) one can see that $f_1=(xy)\inv$ is
a left factor of $f = (xy)\inv (1-xz) (yz)\inv$:
\begin{displaymath}
\begin{bmatrix}
1 & . & 0 & . & .
\end{bmatrix}
=
\begin{bmatrix}
t_1 & t_2 & t_2 & t_3 & t_4
\end{bmatrix}
\begin{bmatrix}
y & -1 & 0 & . & . \\
. & x  & -1 & 0 & 0 \\
0 & 0 & 1 & -1 & x \\
. & . & 0 & z & -1 \\
. & . & 0 & . & y
\end{bmatrix}.
\end{displaymath}
However, $f_1$ (respectively $f_2$) is \emph{not} a left
(respectively right) factor of $f_1 f_2$
while $y\inv$ is a left factor of $f_1 f_2$ and $x\inv$ is a
left factor of $x\inv f_2$. We now take a closer look on that
phenomenon. A \emph{minimal} ALS for $f' = f_1 f_2$ is given by
\begin{displaymath}
\begin{bmatrix}
y & -1 & z \\
. & x & -1 \\
. & . & 1 
\end{bmatrix}
s =
\begin{bmatrix}
. \\ . \\ 1
\end{bmatrix},
\quad
s =
\begin{bmatrix}
y\inv(x\inv - z) \\
x\inv \\
1
\end{bmatrix}.
\end{displaymath}
The reason is that the rank does not increase
(when $f_2$ is multiplied by $x\inv$ and $y\inv$ from the left),
because $1 \in R(x\inv f_2)$:
\begin{displaymath}
\begin{bmatrix}
1 & . & . 
\end{bmatrix}
=
t
\begin{bmatrix}
x & -x & -1 \\
. & 1 & z \\
. & . & 1
\end{bmatrix},
\quad
t = 
\begin{bmatrix}
x\inv & 1 & x\inv - z
\end{bmatrix}.
\end{displaymath}
By adding column~2 to column~1 and switching the first two rows
(this results in switching the first two columns in $t$)
we get $[1, 0, 0]\trp$ as the first column in the system matrix
(for the existence of these transformations see Lemma~\ref{lem:ft.for2}):
\begin{displaymath}
\begin{bmatrix}
1 & . & . 
\end{bmatrix}
=
t
\begin{bmatrix}
1 & 1 & z \\
0 & -x & -1 \\
. & . & 1
\end{bmatrix},
\quad
t = 
\begin{bmatrix}
1 & x\inv & x\inv - z
\end{bmatrix}.
\end{displaymath}
After multiplying $x\inv f_2$ from the left by $y\inv$, we have
$1 \not\in R(f')$. Hence a further multiplication by (for example)
$z\inv$ from the left \emph{increases} the rank: 
\begin{displaymath}
\begin{bmatrix}
z & -1 & . & . \\
. & y & -1 & z \\
. & . & x & -1 \\
. & . & . & 1 
\end{bmatrix}
s =
\begin{bmatrix}
. \\ . \\ . \\ 1
\end{bmatrix},
\quad
s =
\begin{bmatrix}
z\inv f' \\
f' \\
x\inv \\
1
\end{bmatrix}.
\end{displaymath}
To summarize, there are essentially two different factorizations
(on the level of outer factors in Definition~\ref{def:ft.factors})
of $f = (xy)\inv (1-xz) (yz)\inv$, namely
\begin{displaymath}
f = \Bigl( y\inv \bigl( x\inv (1-xz) \bigr) \Bigr) (yz)\inv
  = (xy)\inv \Bigl( \bigl( (1-xz) z\inv \bigr) y\inv \Bigr).
\end{displaymath}
\end{Example}

Now we come to the main definition which will
generalize that of left and right divisors
in the free associative algebra (Definition~\ref{def:ft.lrcop}).
To be able to show in Theorem~\ref{thr:ft.ldivs}
that these definitions are indeed
equivalent on $\freeALG{\field{K}}{X}$,
some preparation is necessary.

\begin{figure}
\begin{center}
\includegraphics{als.301}
\end{center}
\caption{Four different derivation trees
  $\tau_k \in \mathcal{T}(f;f_1,f_2,\ldots,f_5)$ with their
  respective (non-trivial) subtrees
  $\tau_k(\text{l})$ and $\tau_k(\text{r})$.
  The factorizations
  \ifJOURNAL\else, they induce,\fi are
  $f_{\tau_1} = (f_1 f_2)\bigl( (f_3 f_4) f_5 \bigr)$,
  $f_{\tau_2} = (f_1 f_2)\bigl( f_3 (f_4 f_5) \bigr)$,
  $f_{\tau_3} = \bigl((f_1 f_2) f_3 \bigr) (f_4 f_5)$
  resp.~$f_{\tau_4} = \bigl(f_1 (f_2 f_3) \bigr) (f_4 f_5)$.
  }
\label{fig:ft.trees}
\end{figure}

\begin{notation}
Let $m \ge 2$ and $f = f_1 f_2 \cdots f_m$ be a product of
$m$ elements $f_i \in \field{F}$. By $\mathcal{T} = \mathcal{T}(f;f_1,f_2,\ldots,f_m)$
we denote the set of ``multiplicative'' \emph{derivation trees} 
\cite[Section~4.1]{Rigo2016a}
\ (or \emph{parse trees}), that is,
\emph{complete plane binary trees} rooted at $f$ with $m$ leaves
$f_1,f_2,\ldots,f_m$
\cite{Stanley2012a}
\ (or \emph{ordered binary trees}).
Now we fix some $\tau_0 \in \mathcal{T}$
and call a subtree $\tau$ of $\tau_0$ \emph{non-trivial} if
it has at least two leaves.
In this case we denote by 
$\tau(\text{l})$ (respectively $\tau(\text{r})$)
the left (respectively right) subtree of $\tau$
and write $f_{\tau}$ for the product constructed by $\tau$,
that is, $f_{\tau} = f_{\tau(\text{l})} f_{\tau(\text{r})}$
(illustrated in Figure~\ref{fig:ft.trees}).
The \emph{height} (or \emph{length}) of a subtree $\tau$
is defined as $\height\tau = 0$ in the trivial case and
recursively as
$\height\tau = 1 + \max\{ \height\tau(\text{l}), \height\tau(\text{r}) \}$.
\end{notation}

\begin{remark}
In a \emph{complete plane binary tree},
no leaf is left out (that is, even ``trivial'' leaves or subtrees appear)
and subtrees do not ``cross'':
Say $m=5$ (as in Figure~\ref{fig:ft.trees}), $f_2 = 1$ and $f_4 = f_3\inv$.
Then $f = (f_1 f_2) f_5$ is \emph{not} ``complete'',
$f = (f_1 f_3) \bigl( (f_2 f_4) f_5 \bigr)$ is \emph{not} ``plane'' (or ``ordered'')
and $f = (f_1 f_2) (f_3 f_4 f_5)$ is \emph{not} ``binary''.
\end{remark}

\begin{definition}[Left and Right Divisors and Coprime Elements]\label{def:ft.divisors}
Let $\Fbullet = \field{F}^\bullet$.
An element $g\in \Fbullet$ \emph{left divides} $f \in \Fbullet$,
written as $g \Fldivs f$, if, for some $m' < m \in \numN$,
there exist $f_1, f_2, \ldots, f_m \in \Fbullet$
and $\tau_0 \in \mathcal{T}(f;f_1,f_2,\ldots,f_m)$
such that $g = f_1 f_2 \cdots f_{m'}$ and $f = g f_{m'+1} \cdots f_m$
and
$f_{\tau(\text{l})}$ is a \emph{left factor} of
$f_{\tau}=f_{\tau(\text{l})} f_{\tau(\text{r})}$
for all non-trivial subtrees $\tau$ of $\tau_0$.

Two elements $f,g \in \Fbullet$ are called \emph{left coprime}
(in $\Fbullet$)
if for all $h \in \Fbullet$ such that $h\Fldivs f$ and $h\Fldivs g$
implies $h \in \field{K}^\times$,
that is, $h$ is an element of the \emph{trivial group of units}.
Right division $f \Frdivs g$ and the notion of
\emph{right coprime} (in $\Fbullet$) is defined in a similar way.
Two elements (in $\Fbullet$) are called \emph{coprime}
if they are left and right coprime.
\end{definition}


\begin{lemma}[Rank Lemma]\label{lem:ft.rank}
Let $0\neq p,q \in \field{F}$.
Then
\begin{itemize}
\item[(i)] $\rank(pq) = \rank p + \rank q - 1$
  if $p,q \in \freeALG{\field{K}}{X}$,
\item[(ii)] $\rank p = \rank(p\inv) + 1$
  if $p \in \freeALG{\field{K}}{X}$,
\item[(iii)] $\rank(p\inv q) \le \rank(p\inv) + \rank q -  1$
  if $1 \in R(q)$,
\item[(iv)] $\rank(p\inv q) \ge \rank(p\inv) + \rank q -  1$
  if $p\inv$ is a \emph{left factor} of $p\inv q$,
\item[(v)] $\rank(p q\inv) \le \rank p + \rank(q\inv) - 1$
  if $1 \in L(p)$ and
\item[(vi)] $\rank(p q\inv) \ge \rank p + \rank(q\inv) -  1$
  if $q\inv$ is a \emph{right factor} of $p q\inv$.
\end{itemize}
\end{lemma}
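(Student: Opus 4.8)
The plan is to dispatch the six parts in three pairs, since each is a short consequence of machinery already in place. Parts~(i) and~(ii) are the polynomial identities. For~(i) I would split off the scalar cases first: if $p$ or $q$ lies in $\field{K}$, scalar multiplication leaves the rank unchanged and the identity is immediate, while if both $p,q \in \freeALG{\field{K}}{X}\setminus\field{K}$, Proposition~\ref{pro:ft.minmul} supplies a \emph{minimal} ALS for $pq$ of dimension $\rank p + \rank q - 1$. For~(ii) I would use that every polynomial is of type $(1,1)$, so for a non-scalar $p$ (the only case in which the stated equation can hold, since scalars have rank~$1$) the type-$(1,1)$ branch of Theorem~\ref{thr:ft.mininv} yields a \emph{minimal} ALS for $p\inv$ of dimension $\rank p - 1$, which is exactly the claim.

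For the upper bounds~(iii) and~(v) the idea is to write down an explicit (generally non-minimal) ALS of the target dimension and appeal to the minimality defining the rank. In~(iii), the hypothesis $1 \in R(q)$ lets Lemma~\ref{lem:ft.for2} normalise a minimal ALS of $q$ to the shape~\eqref{eqn:ft.mul1.g}; fed together with a minimal ALS of $p\inv$ into Proposition~\ref{pro:ft.mul1}, this produces an ALS for $p\inv q$ of dimension $\rank(p\inv) + \rank q - 1$, forcing $\rank(p\inv q)$ below that value. Part~(v) is the left--right mirror image: with $1 \in L(p)$, Lemma~\ref{lem:ft.for1} provides the shape~\eqref{eqn:ft.mul2.f} and Proposition~\ref{pro:ft.mul2} gives an ALS for $p q\inv$ of dimension $\rank p + \rank(q\inv) - 1$. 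In each case I would dispose of the degenerate situations where one factor is a scalar separately; there the inequality holds with equality.

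The lower bounds~(iv) and~(vi) are the only parts that invoke the \emph{factor} hypothesis, and there the argument is a direct unfolding of Definition~\ref{def:ft.factors}. For~(iv), applying the first defining inequality of a \emph{left factor} to $f=p\inv$ and $h = p\inv q$, and using $(p\inv)\inv(p\inv q)=q$, gives $\rank(p\inv)+\rank q \le \rank(p\inv q)+1$, which rearranges to the claim. Symmetrically, for~(vi) the first defining inequality of a \emph{right factor} applied to $g = q\inv$ and $h = p q\inv$, with $(p q\inv)(q\inv)\inv = p$, gives $\rank p + \rank(q\inv) \le \rank(p q\inv)+1$.

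I do not anticipate a real obstacle: the lemma is essentially a bookkeeping corollary that repackages Proposition~\ref{pro:ft.minmul}, Theorem~\ref{thr:ft.mininv}, and the two multiplication constructions (Propositions~\ref{pro:ft.mul1} and~\ref{pro:ft.mul2}), together with the bare definition of an outer factor. The only points demanding care will be to route each part to the correct construction through its side condition ($1 \in R(q)$ versus $1 \in L(p)$), so that the appropriate normal form from Lemma~\ref{lem:ft.for1} or~\ref{lem:ft.for2} applies, and to handle the scalar degeneracies---in particular to note explicitly in~(ii) that the asserted equation already forces $p \notin \field{K}$.
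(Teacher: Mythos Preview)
Your proposal is correct and follows essentially the same route as the paper: (i) and (ii) from Proposition~\ref{pro:ft.minmul} and Theorem~\ref{thr:ft.mininv}, (iii) and (v) by constructing an ALS of the target dimension via Proposition~\ref{pro:ft.mul1} respectively~\ref{pro:ft.mul2}, and (iv) and (vi) by unfolding Definition~\ref{def:ft.factors}. Your treatment is in fact more explicit than the paper's, particularly in invoking Lemmas~\ref{lem:ft.for1} and~\ref{lem:ft.for2} to reach the required normal forms and in flagging the scalar degeneracy in~(ii).
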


\begin{proof}
The rank identities (i) and (ii) are immediate consequences
of the minimal polynomial multiplication
(Proposition~\ref{pro:ft.minmul})
and the minimal inverse (Theorem~\ref{thr:ft.mininv})
respectively.
The inequalities (iv) and (vi) follow directly from
the Definition~\ref{def:ft.factors}.
To prove (iii),
let $p\inv$ and $q$ be given by the \emph{minimal}
admissible linear systems $\als{A}'_p = (u'_p, A'_p, v'_p)$
and $\als{A}_q = (u_q, A_q, v_q)$ of dimension $n'_p$ and $n_q$
respectively.
The construction of Proposition~\ref{pro:ft.mul1}
yields an ALS of dimension $n'$ (for $p\inv q$),
hence 
$\rank(p\inv q) \le n' = \rank(p\inv) + \rank q - 1$.
The proofs of (v) and (iii) are similar.
\end{proof}

\begin{remark}
Note that (iii) and (v) hold in particular for polynomials.
Further, recall from Example~\ref{ex:ft.factors}
that for (iv) to hold in the case of
$p,q \in \freeALG{\field{K}}{X}$
it is necessary but \emph{not} sufficient
that $p$ and $q$ are \emph{left coprime}.
\end{remark}

To illustrate the idea of the following lemma we take
$p = x y z$ and an arbitrary $f$ with $\rank(f) = 2$.
It is easy to see that $\rank(p f\inv) \ge \rank(p) - \rank(f) = 2$
(with equality for $f=yz$) because otherwise we could construct
an ALS of dimension $\rank(p f\inv) + \rank(f\inv) -1 < \rank(p)$.
Now say that $f=xz$. Then $\rank(p f\inv) = \rank(x y x\inv) = 3$.
However, for another polynomial $q$ we get $\rank(f q) = 2 + \rank(q)$
thus $\rank(p f\inv) + \rank(f q) = 3 + 2 + \rank(q) > 3 + \rank(q) = \rank(pq) + 1$.

What is rather simple in a concrete example,
namely to verify that we cannot ``insert'' non-trivial
units turns out to be very technical since we have
to investigate the left and right families in detail.

\begin{lemma}\label{lem:ft.special}
Let $p\in \freeALG{\field{K}}{X} \setminus \field{K}$
and $f\in \field{F} \setminus \field{K}$ such that
$p_{i_0} p_{i_0+1} \cdots p_m f\inv \not\in \field{K}^\times$
for \emph{all} factorizations $p = p_1 p_2 \cdots p_m$
into atoms and \emph{all} $i_0 \in \{ 1, 2, \ldots, m \}$.
Then $\rank(pf\inv) + \rank(f) > \rank(p) + 1$.
\end{lemma}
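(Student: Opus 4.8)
The plan is to argue by contraposition: assuming $\rank(pf\inv)+\rank(f)\le\rank(p)+1$, I will produce an atomic factorization $p=p_1p_2\cdots p_m$ and an index $i_0$ with $p_{i_0}\cdots p_m=\mu f$ for some $\mu\in\field{K}^\times$, contradicting the hypothesis. Write $h=pf\inv$, so that $p=hf$. The product construction of Proposition~\ref{pro:ft.ratop} applied to $p=hf$ gives $\rank(p)\le\rank(h)+\rank(f)$, and together with the standing assumption this pins $\rank(h)\in\{\rank(p)-\rank(f),\,\rank(p)-\rank(f)+1\}$; in either case $p=hf$ is a minimal or near-minimal multiplication. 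As motivation for why this near-minimal regime is the crux, note the contrasting easy situation: whenever $f\inv$ happens to be a \emph{right factor} of $pf\inv$, Lemma~\ref{lem:ft.rank}(v) (using $1\in L(p)$) and~(vi) force $\rank(pf\inv)=\rank(p)+\rank(f\inv)-1$, and since $\rank(f)+\rank(f\inv)\ge 3$ for every $f\in\field{F}\setminus\field{K}$ (check the types via Theorem~\ref{thr:ft.mininv}), the conclusion then holds outright, without the hypothesis.

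Second, I reduce the statement to a single \emph{peeling step}. It suffices to show that the assumption forces the rightmost atom of $p$ to be split off simultaneously from $p$ and from $f$: concretely, that there is an atom $q$ with $p=p''q$ (classical right division in $\freeALG{\field{K}}{X}$, so $\rank(p'')=\rank(p)-\rank(q)+1$ by Lemma~\ref{lem:ft.rank}(i)) and with $q$ right-dividing $f$ in the rank-dropping sense, i.e.\ $g:=fq\inv$ satisfies $\rank(g)=\rank(f)-\rank(q)+1$. Granting this, $p''g\inv=pf\inv=h$, and substituting the two rank identities into the assumption yields $\rank(p''g\inv)+\rank(g)\le\rank(p'')+1$; moreover every suffix of an atomic factorization of $p''$ is a suffix of one of $p=p''q$, so the contrapositive hypothesis is inherited by the pair $(p'',g)$. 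Since $\rank(p'')<\rank(p)$, induction on $\rank(p)$ produces a suffix of $p''$ equal to $\mu g$, and prepending is compatible with right multiplication by $q$, so $f=gq=\mu\,(\text{suffix of }p)$. Corollary~\ref{cor:ft.rank1} is used to recognize when the remaining factor has become a trivial unit and the recursion stops; the accumulated atoms then multiply to $\mu f$, the desired contradiction.

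The heart of the proof, and the main obstacle, is the peeling step itself: converting the near-minimality of $p=hf$ into the \emph{existence} of such a common right factor $q$. Here I would put $h$ and $f$ into minimal admissible linear systems, normalized by Lemmas~\ref{lem:ft.for1} and~\ref{lem:ft.for2} so that the relevant families take the last-row/first-column form, and assemble the product system for $p=hf$. Because $p$ is a polynomial of type $(1,1)$ its minimal system admits the polynomial (upper-triangular) form, and the structure theorem (Theorem~\ref{thr:ft.cohn99.14}) relates this form to the product system; the deficiency ``$-1$'' in the rank must then be materialized as a zero block by an admissible transformation, exactly as in the proofs of Lemmas~\ref{lem:ft.min1}, \ref{lem:ft.slinin} and~\ref{lem:ft.tlinin}. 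Analysing the left and right families in detail, and using Lemmas~\ref{lem:ft.rt1} and~\ref{lem:ft.rt2} to realize the forced linear dependencies as genuine matrix factorizations over $\field{K}$, should exhibit the rightmost atom $q$ of $p$ as a right factor of $f$ with the stated rank drop. This family bookkeeping is precisely the ``very technical'' step flagged before the statement, and ruling out that an \emph{extraneous} non-trivial unit is absorbed into the product — which is again detected by Corollary~\ref{cor:ft.rank1} — is the delicate point I expect to cost the most work.
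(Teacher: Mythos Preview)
Your outline takes a genuinely different route from the paper. The paper does \emph{not} peel one atom at a time; instead it fixes the minimum
\[
r=\min\bigl\{\rank(p_{i_0}\cdots p_m f^{-1})\;:\; p=p_1\cdots p_m,\ i_0\in\{1,\ldots,m\}\bigr\},
\]
picks an $h$ attaining it, and then for an \emph{arbitrary} split $p=p'p''$ writes $pf^{-1}=p'h$ and $f=h^{-1}p''$. It builds admissible linear systems for $p'h$ and $h^{-1}p''$ from those of $p',p'',h,h^{-1}$, and bounds from below how many entries of the left and right families survive, by a case analysis on the type of $h$ (four cases $(1,1)$, $(1,0)$, $(0,1)$, $(0,0)$ via Theorem~\ref{thr:ft.mininv}). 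The minimality of $r$ is used repeatedly to rule out extra cancellations (e.g.\ a common left divisor of $h$ and $p''$ would contradict minimality of $r$). Summing the two lower bounds then gives $\rank(p'h)+\rank(h^{-1}p'')>\rank(p)+1$ directly.

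Your proposal has a genuine gap at the peeling step, and the sketch you give does not close it. You want to show: from $\rank(pf^{-1})+\rank(f)\le\rank(p)+1$ one can find an atom $q$ with $p=p''q$ in $\freeALG{\field{K}}{X}$ and $\rank(fq^{-1})=\rank(f)-\rank(q)+1$. But this already forces $f$ to have a \emph{polynomial} atom as a right factor in the rank-dropping sense, which is essentially the content of the lemma for a single atom; your remarks about zero blocks and Lemmas~\ref{lem:ft.rt1}, \ref{lem:ft.rt2}, \ref{lem:ft.min1} only say where the work would happen, not how it is done. Concretely, comparing the product system for $p=hf$ with a polynomial minimal system for $p$ via Theorem~\ref{thr:ft.cohn99.14} tells you that at most one row/column is redundant, but it does not by itself locate a polynomial atom inside the $f$-block, nor does it select \emph{which} rightmost atom of $p$ (over the several inequivalent atomic factorizations) is the correct $q$. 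The paper's argument needs the global minimum $r$ precisely to control such cancellations; your inductive scheme has no analogue of this device, so even if you reproduced the paper's four-case family bookkeeping you would still be missing the ingredient that rules out extraneous common factors. As it stands, the inductive reduction is correct, but the base of the induction (the peeling step) is unproved and is where essentially all of the difficulty lies.
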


\begin{proof}
For a fixed non-scalar polynomial $p$ we consider
factorizations $p=p_1 p_2 \cdots p_m$ into $m$ atoms $p_j$.
For notational simplicity let $p_0 = p_{m+1} = 1$.
Here $p_1, p_2, \ldots, p_m$ always denote atoms.
Let
\begin{displaymath}
r = \min \bigl\{ \rank(p_{i_0}p_{i_0+1} \cdots p_m f\inv) \mid
  p = p_1 p_2 \cdots p_m \text{ and }
  i_0 \in \{ 1,2, \ldots, m\} \bigr\}
\end{displaymath}
and $i_0$ and $p_1 p_2 \cdots p_m$ such that this minimum is attained.
By assumption $h = p_{i_0} p_{i_0+1} \cdots p_m f\inv
  \in \field{F} \setminus \field{K}$
with $\rank(h) = r$. Thus $f = h\inv p_{i_0} p_{i_0+1} \cdots p_m$
with \emph{non-scalar} $h$. According to Theorem~\ref{thr:ft.mininv}
there are four cases:
\begin{itemize}
\item $r \ge 2$ and $\rank(h\inv) = r-1$ for type $(1,1)$,
\item $r \ge 2$ and $\rank(h\inv) = r$ for type $(1,0)$,
\item $r \ge 2$ and $\rank(h\inv) = r$ for type $(0,1)$ and
\item $r \ge 1$ and $\rank(h\inv) = r+1$ for type $(0,0)$.
\end{itemize}
Now fix an arbitrary factorization of $p$ (into atoms $q_i$) and
any $1 < \ell \le m$ and
let $p'=q_1 q_2 \cdots q_{\ell-1}$ and $p''=q_{\ell} q_{\ell+1} \cdots q_m$
with ranks $n'$ and $n''$ respectively.
It is enough to show that
\begin{displaymath}
\rank(p' h) + \rank(h\inv p'')
  > \rank(p' p'') + 1
  = \rank(p') + \rank(p'').
\end{displaymath}
We proceed as follows:
Depending on the four cases we construct
---using Proposition~\ref{pro:ft.mul2} and Proposition~\ref{pro:ft.mul1}---
admissible linear systems
for $p'h$ and $h\inv p''$ respectively and find an upper bound
for the number of rows/columns that can be removed (due to $\field{K}$-linear
dependent entries in their left and right families).

We start by assuming type $(1,1)$. 
For $p'h$ we construct an ALS $\als{A}'$ of dimension $n_1 = n' + r-1$
with the block decomposition (as linear representation according to
Theorem~\ref{thr:ft.cohn99.14})
\begin{displaymath}
\pi' = \left(
\begin{bmatrix}
0 & u' & .
\end{bmatrix},
\begin{bmatrix}
A'_{1,1} & A'_{1,2} & A'_{1,3} \\
. & A'_{2,2} & A'_{2,3} \\
. & . & A'_{3,3}
\end{bmatrix},
\begin{bmatrix}
. \\ v' \\ 0
\end{bmatrix}
\right).
\end{displaymath}
For $h\inv p''$ we construct $\als{A}''$ of dimension $n_2 = n'' + r-2$
with the block decomposition
\begin{displaymath}
\pi'' = \left(
\begin{bmatrix}
0 & u'' & .
\end{bmatrix},
\begin{bmatrix}
A''_{1,1} & A''_{1,2} & A''_{1,3} \\
. & A''_{2,2} & A''_{2,3} \\
. & . & A''_{3,3}
\end{bmatrix},
\begin{bmatrix}
. \\ v'' \\ 0
\end{bmatrix}
\right).
\end{displaymath}
Let $k'_t$ (respectively $k'_s$) be the size of block $A'_{1,1}$
(respectively $A'_{3,3}$) in $\pi'$ and
$k''_t$ (respectively $k''_s$) be the size of block $A''_{1,1}$
(respectively $A''_{3,3}$) in $\pi''$.
Firstly, we write the left and the right family of $h\inv$ in terms
of their respective family of $h$:
Let $(s^h_1, s^h_2, \ldots, s^h_r)$ and $(t^h_1, t^h_2, \ldots, t^h_r)$
be the left and right family respectively of some minimal ALS for $h$.
Then $s_{h\inv} = (1, s^h_{r-1}, \ldots, s^h_2) h\inv$
and $t_{h\inv} = h\inv (t^h_{r-1}, \ldots, t^h_2, 1)$
are the families of a minimal ALS for $h\inv$
constructed by Theorem~\ref{thr:ft.mininv}.
Recall that row/column~$n'$ was eliminated in a system of dimension $n'+r$
to get $\als{A}'$ and 
row/column~$r$ was eliminated in a system of dimension $r-1+n''$
to get $\als{A}''$.
Secondly, we take a closer look at the left families of $\als{A}'$ and
$\als{A}''$. They are (without loss of generality)
\begin{align*}
s' &= (s^{p'}_1 h, s^{p'}_2 h, \ldots, s^{p'}_{n'-1} h, s^h_1, s^h_2, \ldots, s^h_r)
  \quad\text{and} \\
s'' &= (\underbrace{h\inv p'', s^h_{r-1} h\inv p'', \ldots, s^h_2 h\inv p''}_{r-1},
        \underbrace{s^{p''}_2, \ldots, s^{p''}_{n''}}_{n''-1})
\end{align*}
respectively.
The first observation is that $k'_s = 0$, that is,
the left family of $\als{A}'$ is $\field{K}$-linearly
independent because $1 \in R(h)$ and Lemma~\ref{lem:ft.slinin}.
The first $r-1$ and the last $n''-1$ components of $s''$ are
$\field{K}$-linearly independent. At most $r-1$ (linear
combinations of) components in $s''$ can be eliminated.
Hence we have $k''_s \le r-1$. However, we claim that
\begin{displaymath}
k''_s \le r-2.
\end{displaymath}
Assume to the contrary that (the rank of $p''$ is large enough and)
$k''_s = r-1$, that is, the block $A''_{3,3}$ has dimension $k''_s$.
Then \emph{all} to $h\inv$ corresponding (linear combinations of)
components in $s''$ can
be eliminated by the last $n''-1$ \emph{polynomial} entries.
If $r=2$ then $h$ is a polynomial, so we assume $r \ge 3$.
Since the left family $(1,s_{r-1}^h, \ldots, s_2^h)h\inv$ of $h\inv$
is $\field{K}$-linear independent and
\begin{displaymath}
\rank(h\inv p'') < \rank(s_k^h h\inv p'') < \rank(p'') = n''
\end{displaymath}
for all $k \in \{ 2, 3, \ldots, r-1 \}$ it follows that these
$s_k^h$'s are polynomials and therefore $h$ is a polynomial because
it is of type $(1,1)$. Hence $h$ and $p''$ have a non-trivial
(left) greatest common divisor which contradicts the minimality
of $r = \rank(h)$.
Thus $k''_s \le r-2$.
Thirdly, we take a closer look at the right families
\begin{align*}
t' &= (\underbrace{t^{p'}_1, t^{p'}_2, \ldots, t^{p'}_{n'-1}, p' t^h_1}_{n'},
        p' t^h_2, \ldots, p' t^h_r)  \quad\text{and} \\
t'' &= (h\inv t^h_{r-1}, \ldots, h\inv t^h_2, h\inv,
          h\inv t^{p''}_2, \ldots, h\inv t^{p''}_{n''}) \\
    &= h\inv (\underbrace{t^h_{r-1}, \ldots, t^h_2, 1}_{r-1},
          t^{p''}_2, \ldots, t^{p''}_{n''}).
\end{align*}
By assumption, the first $r-1$ and the last $n''$ components in $t''$ are
$\field{K}$-linearly independent.
Since the $t^{p''}_i$'s are polynomials, we can eliminate
at most $k \le r-2$ \emph{polynomial} (linear combinations of)
components in $t''$.
But then the corresponding $k$ (linear combinations of)
components in $t'$ are $\field{K}$-linearly independent of
$(t_1^{p'}, \ldots, t_{n'-1}^{p'})$ because they are of the form
$p'\tilde{t}_i^h$ (see Lemma~\ref{lem:ft.tlinin}
or minimal polynomial multiplication).
By assumption, the first $n'$ and the last $r$ components in
$t'$ are $\field{K}$-linearly independent.
Thus at most $r-1-k$ components can be eliminated.
Hence we have $k'_t + k''_t \le r-1$. However, we claim that
\begin{displaymath}
k'_t + k''_t \le r-2.
\end{displaymath}
Assume to the contrary that $k'_t + k''_t = r-1$. Than
$p't^h_r$ would also ``vanish'', that is,
\begin{displaymath}
p'g = p'h + \sum_{j=2}^{r-1} \beta_j p' t^h_j = \sum_{i=1}^{n'-1} \alpha_i t^{p'}_i
  = q 
\end{displaymath}
with $\rank(q) < \rank(p')$. Hence $g = \kappa q_{\ell-1}\inv \cdots q_{\ell'}\inv$
for some $\ell'$ such that $1 \le \ell' < \ell-1$ and $\kappa \in \field{K}$.
By assumption $h$ is of type $(1,1)$,
but $g$ is of type $(0,0)$ by Theorem~\ref{thr:ft.mininv}.
Therefore
$g = h - h_0$ for some (non-zero) $h_0$ of type $(1,1)$.
But that would contradict $\field{K}$-linear independence
of the right family $(t^h_1, t^h_2, \ldots, t^h_r)$.
Finally, for $h$ of type $(1,1)$, we have
\begin{align*}
\rank(p'h) + \rank(h\inv p'')
  &= n' + r-1 - (k'_s + k'_t) + n'' + r-2 - (k''_s + k''_t) \\
  &\ge n' + n'' + 2r-3 - (r-2) - (r-2) \\
  &= n' + n'' + 1 > \rank(p'p'') + 1.
\end{align*}
If $h$ is of type $(0,0)$, then $h\inv$ is of type
$(1,1)$ and $t''$ is $\field{K}$-linearly independent,
so we can use similar arguments.
If $h$ is of type $(1,0)$ then the systems
$\als{A}'$ and $\als{A}''$ are of dimensions
$n' + r -1$ and $n''+r-1$ respectively.
Their left families are
\begin{align*}
s' &= (s^{p'}_1 h, s^{p'}_2 h, \ldots, s^{p'}_{n'-1} h, s^h_1, s^h_2, \ldots, s^h_r)
  \quad\text{and} \\
s'' &= (\underbrace{h\inv p'', s^h_r h\inv p'', \ldots, s^h_2 h\inv p''}_{r},
        \underbrace{s^{p''}_2, \ldots, s^{p''}_{n''}}_{n''-1}).
\end{align*}
By similar arguments to the case $(1,1)$ ---$s'$ is $\field{K}$-linearly
independent--- we get $k_s'' \le r-1$.
The right families are
\begin{align*}
t' &= (\underbrace{t^{p'}_1, t^{p'}_2, \ldots, t^{p'}_{n'-1}, p' t^h_1}_{n'},
       \underbrace{p' t^h_2, \ldots, p' t^h_r}_{r-1})  \quad\text{and} \\
t'' &= h\inv (\underbrace{t^h_r, \ldots, t^h_2, 1}_{r},
          t^{p''}_2, \ldots, t^{p''}_{n''}).
\end{align*}
Since neither $h$ nor $h\inv$ is a polynomial,
at most $r-2$ components in $t'$ or $t''$ can be eliminated,
that is, $k_t' + k_t'' \le r-2$.
Therefore, for $h$ of type $(1,0)$, we have
\begin{align*}
\rank(p'h) + \rank(h\inv p'')
  &= n' + r-1 - (k'_s + k'_t) + n'' + r-1 - (k''_s + k''_t) \\
  &\ge n' + n'' + 2r-2 - (r-1) - (r-2) \\
  &= n' + n'' + 1 > \rank(p'p'') + 1.
\end{align*}
If $h$ is of type $(0,1)$, then we have, by Lemma~\ref{lem:ft.tlinin},
$\field{K}$-linearly independent right family $t''$.
By similar arguments we have $k_t' \le r-1$ and $k_s'+k_s''\le 2$.
Thus, at the end,
we have shown that
$\rank(p f\inv) + \rank(f) > \rank(p) + 1$.
\end{proof}

\begin{lemma}\label{lem:ft.polydivs}
Let $q \in H = \freeALG{\field{K}}{X}^\bullet$
and $p \in \Fbullet = \field{F}^\bullet$.
Then $p \Fldivs q$ implies $q = ph$ with $p,h \in H$.
\end{lemma}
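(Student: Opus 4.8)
First I would unpack the hypothesis. By Definition~\ref{def:ft.divisors}, $p \Fldivs q$ supplies a product $q = f_1 f_2 \cdots f_m$ with $f_i \in \Fbullet$, a derivation tree $\tau_0 \in \mathcal{T}(q; f_1,\ldots,f_m)$, and an index $m' < m$ with $p = f_1 \cdots f_{m'}$ and $h := f_{m'+1}\cdots f_m$, such that $f_{\tau(\mathrm{l})}$ is a \emph{left factor} of $f_{\tau}$ for every non-trivial subtree $\tau$ of $\tau_0$. Since $p$ and $h$ are ordered products of the leaves $f_i$, it suffices to prove that \emph{every} leaf $f_i$ is a polynomial: then $p$ and $h$, being products of polynomials, lie in $H$, and $q = ph$ is the asserted factorization.

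The heart of the argument is an isolated building block: if $w \in \freeALG{\field{K}}{X}^\bullet$ and $w = ab$ with $a$ a left factor of $w$, then both $a$ and $b$ are polynomials. To prove it I would first treat the main case $w \notin \field{K}$ and $b \notin \field{K}$ (the remaining corner cases are elementary: if $b \in \field{K}$ then $a = wb\inv$ is a scalar multiple of $w$, while if $w \in \field{K}$ a short rank count via Corollary~\ref{cor:ft.rank1} forces $a,b \in \field{K}$, since a non-scalar cannot be a left factor of a scalar). The first inequality of Definition~\ref{def:ft.factors}, using $a\inv w = b$ and $wb\inv = a$, reads $\rank(wb\inv) + \rank(b) = \rank(a) + \rank(b) \le \rank(w) + 1$. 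This is exactly the negation of the conclusion of Lemma~\ref{lem:ft.special} applied to the polynomial $w$ and the element $f = b$; hence the hypothesis of that lemma must fail, so there exist a factorization $w = w_1 \cdots w_k$ into atoms and an index $i_0$ with $w_{i_0} w_{i_0+1} \cdots w_k\, b\inv \in \field{K}^\times$. Therefore $b = \kappa\inv w_{i_0} \cdots w_k$ is a polynomial for some $\kappa \in \field{K}^\times$, and $a = wb\inv = \kappa\, w_1 \cdots w_{i_0-1}$ is a polynomial as well (a scalar when $i_0 = 1$).

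With the building block available I would induct on the number of leaves $k$ of a subtree $\sigma$ of $\tau_0$, establishing: if $f_{\sigma}$ is a polynomial, then every leaf of $\sigma$ is a polynomial. The base case $k = 1$ is immediate, the single leaf being $f_{\sigma}$ itself. For $k \ge 2$ the root of $\sigma$ is non-trivial, so $f_{\sigma} = f_{\sigma(\mathrm{l})} f_{\sigma(\mathrm{r})}$ with $f_{\sigma(\mathrm{l})}$ a left factor of the polynomial $f_{\sigma}$; the building block makes both $f_{\sigma(\mathrm{l})}$ and $f_{\sigma(\mathrm{r})}$ polynomials. Every non-trivial subtree of $\sigma(\mathrm{l})$ and of $\sigma(\mathrm{r})$ is again a subtree of $\tau_0$, so the left-factor conditions persist, and the induction hypothesis applied to the two strictly smaller subtrees yields that all their leaves are polynomials. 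Applying this to $\sigma = \tau_0$, whose product $f_{\tau_0} = q$ is a polynomial by assumption, shows that every $f_i$ is a polynomial, which completes the proof.

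The main obstacle is concentrated entirely in the building block, and there the genuine work is already carried out by Lemma~\ref{lem:ft.special}. The delicate point I expect is simply to recognize that the left-factor inequality is precisely the negated conclusion of that lemma under the choice $f = b$, so that its contrapositive produces an honest polynomial suffix $w_{i_0} \cdots w_k$ equal to $b$ up to a trivial unit. Everything else --- the reduction to leaves, the tree induction, and the scalar corner cases --- is routine once this correspondence is in place.
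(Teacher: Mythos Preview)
Your proof is correct and follows essentially the same route as the paper: both invoke the contrapositive of Lemma~\ref{lem:ft.special} at each non-trivial node of the derivation tree to force the right (hence left) child to be a polynomial tail of an atomic factorization, and then recurse down the tree. The only cosmetic differences are that you induct on the number of leaves rather than on height and you isolate the single-node step as a separate ``building block'' with explicit treatment of the scalar corner cases.
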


\begin{proof}
For some $m' < m$,
let $p = f_1 f_2 \cdots f_{m'}$ and $q = p f_{m'+1} \cdots f_m$
and $\tau_0 \in \mathcal{T}(q;f_1,f_2,\ldots,f_m)$ such that $q_{\tau(\text{l})}$
is a left factor of $q_{\tau}=q_{\tau(\text{l})} q_{\tau(\text{r})}$
for all non-trivial subtrees $\tau$ of $\tau_0$.
We have to show that $q_{\tau_k} \in H$ for all
non-trivial subtrees $\tau_k$ of $\tau_0$ with root in height $k$
by induction on $k$ from $0$ to $\height(\tau_0)-1$.
For $k=0$ we have $q_{\tau_0} = q \in H$.
Without loss of generality assume that $q_{\tau_k(\text{r})}$
is a \emph{proper right} factor of
$q_{\tau_k} = q_{\tau_k(\text{l})} q_{\tau_k(\text{r})}$
(in this case $q_{\tau_k(\text{l})}$ is a
\emph{proper left} factor of $q_{\tau_k}$;
nothing has to be shown for trivial factors), that is
\begin{align*}
\rank\bigl(q_{\tau_k} q_{\tau_k(\text{r})}\inv\bigr)
  + \rank\bigl(q_{\tau_k(\text{r})}\bigr) &\le 1 + \rank(q_{\tau_k}) \quad\text{and} \\
\rank\bigl(q_{\tau_k(\text{r})}\inv\bigr)
  + \rank\bigl(q_{\tau_k(\text{r})} q_{\tau_k}\inv\bigr) &\le 1 + \rank(q_{\tau_k}\inv).
\end{align*}
By assumption $q_{\tau_k}$ is a polynomial and thus all factorizations
into atoms have the same length, say $\ell_k$
(which depends on $\tau_k$).
We claim that $q_{\tau_k(\text{r})} = \kappa g_{\ell_0} \cdots g_{\ell_k}$
for some factorization $q_{\tau_k} = g_1 g_2 \cdots g_{\ell_k}$,
some $\ell_0 \in \{ 1, 2, \ldots, \ell_k \}$ and $\kappa \in \field{K}$.
Assume the contrary and apply Lemma~\ref{lem:ft.special}
with $p=q_{\tau_k}$ and $f = q_{\tau_k(\text{r})}$ to get the
contradiction
\begin{displaymath}
\rank\bigl(q_{\tau_k} q_{\tau_k(\text{r})}\inv\bigr)
  + \rank(q_{\tau_k(\text{r})}) > 1 + \rank(q_{\tau_k}).
\end{displaymath}
Thus $q_{\tau_k(\text{r})} \in H$ and
$q_{\tau_k(\text{l})} = q_{\tau_k} q_{\tau_k(\text{r})}\inv \in H$.
In particular $p = f_1 f_2 \cdots f_m' \in H$
and $h = f_{m'+1} f_{m'+2} \cdots f_m \in H$.
\end{proof}

\begin{theorem}\label{thr:ft.ldivs}
Let $p,q \in H = \freeALG{\field{K}}{X}^\bullet$.
Then $p \ldivs q$ (respectively $p \rdivs q$)
if and only if $p \Fldivs q$ (respectively $p \Frdivs q$)
in $\field{F}$.
\end{theorem}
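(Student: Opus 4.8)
The plan is to establish the left-divisibility equivalence by proving its two implications separately, and then to deduce the right-divisibility statement from the manifest left--right symmetry of the whole construction; accordingly I would treat only $\ldivs \Leftrightarrow \Fldivs$ in detail.

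For $p \ldivs q \Rightarrow p \Fldivs q$, I would start from $q = ph$ with $h \in H$ and exhibit a witnessing decomposition for Definition~\ref{def:ft.divisors} directly. The minimal choice suffices: take $m = 2$, $m' = 1$, $f_1 = p$, $f_2 = h$, and let $\tau_0$ be the unique two-leaf tree. Its only non-trivial subtree is $\tau_0$ itself, so the entire content is to verify that $p = f_{\tau_0(\text{l})}$ is a \emph{left factor} of $f_{\tau_0} = ph = q$ in the sense of Definition~\ref{def:ft.factors}. Computing $p\inv q = h$ and $q\inv p = h\inv$ in $\field{F}$, the two defining inequalities become $\rank(p) + \rank(h) \le \rank(q) + 1$ and $\rank(h\inv) + \rank(p\inv) \le \rank(q\inv) + 1$. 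Both are now immediate from the Rank Lemma (Lemma~\ref{lem:ft.rank}): part~(i) gives $\rank(q) = \rank(p) + \rank(h) - 1$, turning the first inequality into an equality, while part~(ii) rewrites the second as $\rank(p) + \rank(h) - 2 \le \rank(p) + \rank(h) - 1$, which holds with one unit to spare. Hence $p$ is a left factor of $q$ and therefore $p \Fldivs q$.

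For the converse $p \Fldivs q \Rightarrow p \ldivs q$ there is nothing left to prove: this is precisely the content of Lemma~\ref{lem:ft.polydivs}. Since $q \in H$ and $p$ left divides it in the free-field sense, that lemma yields a factorization $q = ph$ with $h \in H$, which by Definition~\ref{def:ft.lrcop} is exactly $p \ldivs q$. (The lemma in fact derives $p \in H$ as part of its conclusion, so this direction does not even use the hypothesis $p \in H$.)

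The right-divisibility equivalence $p \rdivs q \Leftrightarrow p \Frdivs q$ follows by the left--right dual of the above. The word-reversal anti-automorphism of $\freeALG{\field{K}}{X}$ extends to $\field{F}$, and on a linear representation it acts by transposing the system matrix and interchanging $u$ and $v$; consequently it preserves rank, swaps the left and right families, and thereby interchanges $\ldivs$ with $\rdivs$, ``left factor'' with ``right factor'', and $\Fldivs$ with $\Frdivs$. Applying the already-proved left statement to the reversed polynomials and transporting back gives the right statement. The real work of the theorem lives entirely in Lemma~\ref{lem:ft.special} and Lemma~\ref{lem:ft.polydivs}; for the theorem proper the only point demanding care is this symmetry bookkeeping, namely confirming that rank and the left-factor inequalities are genuinely preserved (not merely permuted) under reversal, so that the dual of Lemma~\ref{lem:ft.polydivs} is legitimately available.
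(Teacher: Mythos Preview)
Your proposal is correct and follows essentially the same route as the paper: for $p \ldivs q \Rightarrow p \Fldivs q$ you take the two-leaf tree and verify the left-factor inequalities via Lemma~\ref{lem:ft.rank}~(i) and~(ii), and for the converse you invoke Lemma~\ref{lem:ft.polydivs} directly. Your explicit treatment of the right-divisibility case via the word-reversal anti-automorphism is a welcome addition that the paper leaves implicit.
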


\begin{proof}
Let $p \ldivs q$, that is $q \in pH = \{ p h \mid h \in H \}$.
We show that $p$ is a left factor of $q = ph$.
By Lemma~\ref{lem:ft.rank} (i) we get
$\rank(q) = \rank(p) + \rank (p\inv q) -1$
and by (ii) we get
$\rank(q\inv) + 1 = \rank(p\inv) +1 + \rank(q\inv p)$,
thus $p$ is a left factor of $ph = q$
and therefore ($m'=1$, $m=2$ and a derivation tree $\tau_0$
of height~$1$ in Definition~\ref{def:ft.divisors})
$p \Fldivs q$ (in $\field{F}$).
Conversely,
we have to show that $q \in pH$.
But this follows directly from 
the assumption $p \Fldivs q$
and Lemma~\ref{lem:ft.polydivs}.
\end{proof}

\begin{notation}
Since the left (respectively right) division in $\freeALG{\field{K}}{X}$
is the same as in $\field{F}$ we can simplify notation
and use $f \ldivs g$ instead of $f \Fldivs g$
(respectively $f\rdivs g$ instead of $f \Frdivs g$)
in the following.
\end{notation}

\begin{definition}[Atoms, Irreducible Elements]\label{def:ft.atoms1}
Let $\Fbullet = \field{F}^\bullet$.
An element $f\in \Fbullet \setminus \field{K}$, that is,
a non-trivial unit (in $\field{F}$), is called (generalized) \emph{atom}
(or \emph{irreducible})
if $f = g_1 g_2$ with $g_1,g_2 \in \Fbullet$ and $g_1 \ldivs f$ implies that
either $g_1 \in \field{K}^\times$ or $g_2 \in \field{K}^\times$.
Like in Definition~\ref{def:ft.atoms},
the set of atoms in $\field{F}$ is denoted by
$\atoms(\field{F})$.
\end{definition}

\begin{remark}
Even in ``simple'' cases it is difficult to decide whether
an element is irreducible (in the general sense of Definition~\ref{def:ft.atoms1})
based on rational expressions.
As an example take $f=1-xy$ and $g=(1-zy)\inv$. Then $f g$ is
\emph{irreducible} while  $gf$ is \emph{reducible}.
One has
to look on their \emph{minimal} linear representations.
Minimal admissible linear systems for $fg$ and $gf$ are
\begin{displaymath}
\begin{bmatrix}
1 & -1 & -x \\
. & y & 1 \\
. & 1 & z
\end{bmatrix}
s =
\begin{bmatrix}
. \\ . \\ 1
\end{bmatrix}
\quad\text{and}\quad
\begin{bmatrix}
y & 1 & . & . \\
1 & z & -x & -1 \\
. & . & 1 & y \\
. & . & . & 1
\end{bmatrix}
s = 
\begin{bmatrix}
. \\ . \\ . \\ 1
\end{bmatrix}
\end{displaymath}
respectively. 
\end{remark}

\begin{Remark}
In other words, a generalized atom can be created \emph{multiplicatively}
out of two atoms. This phenomenon is somewhat curious.
As an example we consider $f = f_1 f_2\inv$ for
$f_1 = 1 - xyz$ and $f_2 = 1- zyz$. A \emph{minimal}
admissible linear system for $f$ is given by
\begin{displaymath}
\begin{bmatrix}
1 & -1 & . & -x \\
. & z & 1 & . \\
. & . & y & -1 \\
. & 1 & . & z
\end{bmatrix}
s =
\begin{bmatrix}
. \\ . \\ . \\ 1
\end{bmatrix}.
\end{displaymath}
Since $\rank(f_1) + \rank(f_2\inv) = 7 > 5 = \rank(f) + 1$,
neither $f_1$ nor $f_2$ is an outer factor (of $f$).
Indeed, $f$ does not have any (non-trivial) outer factors.
\end{Remark}

\begin{remark}
Note the additional condition $g_1 \ldivs f$
compared to Definition~\ref{def:ft.atoms}.
It is \emph{crucial}.
Without $f = 1-x = x \cdot (x\inv -1)$ would not
be an atom. (Actually there would not be any atoms
at all.)
However, $x \nldivs (1-x)$
because $\rank(x) + \rank\bigl(x\inv (1-x) \bigr) = 4 > 3 = 1 + \rank(1-x)$.
\end{remark}

\begin{remark}
A reducible polynomial has only ``polynomial'' divisors.
The result of Lemma~\ref{lem:ft.polydivs} is stronger than
the assumptions for the equivalence of divisbility in Theorem~\ref{thr:ft.ldivs},
in which one divisor is a polynomial.
\end{remark}

\begin{proposition}\label{pro:ft.atoms}
A polynomial is an atom if and only if it is a generalized atom.
\end{proposition}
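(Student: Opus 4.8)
The plan is to prove the two implications separately, the only real tools being Lemma~\ref{lem:ft.polydivs}, Theorem~\ref{thr:ft.ldivs}, and the fact that every non-zero element of $\field{F}$ is invertible (so cancellation is available). Throughout I use that for polynomials the classical left divisibility $\ldivs$ and the free-field version $\Fldivs$ agree (the Notation after Theorem~\ref{thr:ft.ldivs}), and that the units of $\freeALG{\field{K}}{X}$ are precisely $\field{K}^\times$. Note also that an atom (Definition~\ref{def:ft.atoms}) and a generalized atom (Definition~\ref{def:ft.atoms1}) are both non-scalar, so the ambient condition $p \in \Fbullet \setminus \field{K}$ is shared and need not be re-examined.

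For the direction ``atom $\Rightarrow$ generalized atom'', let $p$ be a polynomial atom and suppose $p = g_1 g_2$ with $g_1, g_2 \in \Fbullet$ and $g_1 \ldivs p$. First I would invoke Lemma~\ref{lem:ft.polydivs} with $q = p \in H$: from $g_1 \Fldivs p$ it yields a factorization $p = g_1 h$ with $g_1, h \in H$, so in particular $g_1$ is a polynomial. Cancelling $g_1$ in $p = g_1 g_2 = g_1 h$ gives $g_2 = h \in H$, so $g_2$ is a polynomial as well. Now $p = g_1 g_2$ is an honest polynomial factorization, and atomicity of $p$ (Definition~\ref{def:ft.atoms}) forces $g_1 \in \field{K}^\times$ or $g_2 \in \field{K}^\times$, which is exactly what the generalized-atom condition demands.

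For the converse, let $p$ be a generalized atom and suppose $p = q_1 q_2$ with $q_1, q_2 \in H$. Since this is a polynomial factorization, $q_1 \ldivs p$ in $\freeALG{\field{K}}{X}$, hence $q_1 \Fldivs p$ by Theorem~\ref{thr:ft.ldivs}. As $q_1, q_2 \in \Fbullet$ and $q_1 \ldivs p$, the defining property of a generalized atom (Definition~\ref{def:ft.atoms1}) applies verbatim and yields $q_1 \in \field{K}^\times$ or $q_2 \in \field{K}^\times$. Thus $p$ satisfies the classical atom condition, and the two notions coincide for polynomials.

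The statement is almost a matter of unwinding definitions, so there is no hard computational core. The one step that carries the content is the reduction in the first direction: a priori the cofactor $g_2$ produced by the generalized factorization lives in the field, and it is Lemma~\ref{lem:ft.polydivs} --- applicable precisely because the definition of a generalized atom carries the extra hypothesis $g_1 \ldivs f$ --- that forces both factors to be polynomials and lets the classical atom property take over. I expect this dependence on the divisibility hypothesis (without which, as the preceding remark notes, there would be no atoms at all) to be the only point needing care.
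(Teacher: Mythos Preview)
Your proof is correct and follows essentially the same route as the paper: the key step in both is the appeal to Lemma~\ref{lem:ft.polydivs} to force the cofactors of a polynomial to be polynomials, after which both implications are immediate. The paper's proof is simply a terse one-line version of what you have spelled out (and your explicit invocation of Theorem~\ref{thr:ft.ldivs} for the converse direction makes explicit what the paper leaves implicit).
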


\begin{proof}
We have to show that
$\atoms(\freeALG{\field{K}}{X}) = \atoms(\field{F}) \cap \freeALG{\field{K}}{X}$.
Recall from Lemma~\ref{lem:ft.polydivs} that
for a polynomial $p$ we have
$g_1 \ldivs p$ implies $p = g_1 g_2$ with polynomials $g_1$ and $g_2$.
Now both implications are immediate.
\end{proof}

\begin{notation}
In the following we use ``atom'' as the general term
and ``polynomial atom'' if we want to emphasize that the
atom is an element in the free associative algebra.
\end{notation}

\section{Minimal Multiplication and Factorization}\label{sec:ft.mf}

Before we describe the correspondence of zero (lower left and upper right)
blocks in the system matrix of a minimal ALS and a non-trivial factorization,
we describe a construction of a minimal ALS $\als{A} = (u,A,v) = (1,A,\lambda)$,
$A = (a_{ij})$,
for the product of
two non-zero elements $f,g$ given by \emph{minimal} admissible linear systems,
say of dimension $n_f$ and $n_g$ respectively,
if $f$ is a \emph{left factor} of $fg$.
According to Theorem~\ref{thr:ft.minmul}
there are three cases (see also Figure~\ref{fig:ft.minmul}, page~\pageref{fig:ft.minmul}):
\begin{center}
\begin{tabular}{lccl}
type& lower left zeros & ``coupling'' & upper right zeros \\\hline\tabstrut
$(1,*)$ & $n_g \times (n_f-1)$ 
        & $\exists\, 1 \le i < n_f : a_{i,n_f} \not\in \field{K}$
        & $(n_f-1)\times (n_g-1)$ \\
$(*,1)$ & $(n_g-1)\times n_f$ 
        & $\exists\, 1 \le j < n_g : a_{n_f,n_f+j} \not\in \field{K}$
        & $(n_f-1)\times (n_g-1)$ \\
$(0,0)$ & $n_g \times n_f$ 
        & $\forall\, i =1,\ldots,n_f: a_{i,n_f+1}\in \field{K}$
        & $n_f \times (n_g-1)$ 
\end{tabular}
\end{center}
Note that for type $(1,*)$ and $(*,1)$ the ``coupling condition''
must hold for \emph{each} (admissibly) transformed system
because otherwise both types could be ``derived'' easily from
type $(0,0)$.
To ``reverse'' the multiplication we need to transform an
ALS accordingly using transformations of the form
\begin{equation}\label{eqn:ft.trans}
(P,Q) = \left(
\begin{bmatrix}
\alpha_{1,1} & \ldots & \alpha_{1,n-1} & 0 \\
\vdots & \ddots & \vdots & \vdots \\
\alpha_{n-1,1} & \ldots & \alpha_{n-1,n-1} & 0 \\
\alpha_{n,1} & \ldots & \alpha_{n,n-1} & 1
\end{bmatrix},
\begin{bmatrix}
1 & 0 & \ldots & 0 \\
\beta_{2,1} & \beta_{2,2} & \ldots & \beta_{n,2} \\
\vdots & \vdots & \ddots & \vdots \\
\beta_{n,1} & \beta_{n,2} & \ldots & \beta_{n,n}
\end{bmatrix}
\right).
\end{equation}
with entries $\alpha_{ij}, \beta_{ij} \in \field{K}$.
To ensure invertibility we need $\det P \neq 0$ and $\det Q \neq 0$.

\begin{figure}
\begin{center}
\includegraphics{als.302}
\end{center}
\caption{There are three types of factorization of an element $h=fg$
  with $\rank(h)=n$, $\rank(f)=k$ and $\rank(g) = n-k$ for type $(0,0)$
  or $\rank(g) = n-k+1$ otherwise.
  These types correspond to that of the minimal multiplication.
  For type $(1,*)$ and $(*,1)$ the coupling has to be \emph{non-scalar}
  for \emph{all} transformations yielding appropriate zero blocks.
  The upper left (grey) block corresponds (modulo ``overlapping'')
  to the system matrix $A_f$ of the ALS $\als{A}_f = (u_f,A_f,v_f)$ of $f$,
  the lower right to $A_g$ of the ALS $\als{A}_g = (u_g,A_g,v_g)$ of $g$.
  White blocks denote zeros, black blocks contain at least one
  non-scalar entry.
  }
\label{fig:ft.minmul}
\end{figure}

\begin{remark}
The \emph{minimal polynomial multiplication} (Proposition~\ref{pro:ft.minmul})
can be formulated as a corollary to the following theorem.
The difficulty of the proof of the former is hidden
in the definition of \emph{outer} factors, Definition~\ref{def:ft.factors}.
To test if $f$ is a left factor of $fg$ in general
relies on techniques for minimization of linear representations
which is discussed in 
\cite{Schrempf2018a}
.
\end{remark}

\begin{theorem}[Minimal Multiplication]\label{thr:ft.minmul}
Let $f,g\in \field{F} \setminus \field{K}$ be given by the
\emph{minimal} admissible linear systems
$\als{A}_f = (u_f, A_f, v_f)$ and $\als{A}_g = (u_g, A_g, v_g)$
of dimension $n_f$ and $n_g$ respectively.
Let $n = n_f + n_g$.
If $f$ is a \emph{left factor} of $fg$, then a \emph{minimal} ALS for $fg$
is given by
\begin{displaymath}
\als{A} = 
\begin{cases}
\text{Proposition~\ref{pro:ft.mul2} with $\dim \als{A} = n-1$}
  & \text{if $1\in L(f)$,} \\
\text{Proposition~\ref{pro:ft.mul1} with $\dim \als{A} = n-1$}
  & \text{if $1\in R(g)$,} \\
\text{Proposition~\ref{pro:ft.ratop} with $\dim \als{A} = n$}
  & \text{if $1 \not\in R(g)$ and $1 \not\in L(f)$.}
\end{cases}
\end{displaymath}
\end{theorem}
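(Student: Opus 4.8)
The plan is to pin down $\rank(fg)$ by squeezing it between the upper bound coming from the explicit construction and the lower bound coming from the left-factor hypothesis. For the upper bound, each of the three constructions is a bona fide admissible linear system for $fg$: when $1\in L(f)$ we first bring $\als{A}_f$ into the shape \eqref{eqn:ft.mul2.f} by Lemma~\ref{lem:ft.for1} and apply Proposition~\ref{pro:ft.mul2}, obtaining dimension $n-1$; when $1\in R(g)$ we bring $\als{A}_g$ into the shape \eqref{eqn:ft.mul1.g} by Lemma~\ref{lem:ft.for2} and apply Proposition~\ref{pro:ft.mul1}, again dimension $n-1$; otherwise we take the product from Proposition~\ref{pro:ft.ratop} of dimension $n$. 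In every case $\rank(fg)\le\dim\als{A}$.

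For the lower bound I invoke that $f$ is a left factor of $fg$. Since $f\inv(fg)=g$, the first inequality of Definition~\ref{def:ft.factors} with $h=fg$ reads $\rank(f)+\rank(g)\le\rank(fg)+1$, that is $\rank(fg)\ge n_f+n_g-1=n-1$. In the cases $1\in L(f)$ and $1\in R(g)$ the construction has dimension exactly $n-1$, so lower and upper bounds agree and $\als{A}$ is minimal; here no direct inspection of the families is required, their $\field{K}$-linear independence following a posteriori from Proposition~\ref{pro:ft.cohn94.47}.

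The case $1\notin L(f)$ and $1\notin R(g)$ is the genuinely delicate one, because there the left-factor bound $\rank(fg)\ge n-1$ is one short of the construction's dimension $n$. I would prove that the dimension-$n$ system $\als{A}$ of Proposition~\ref{pro:ft.ratop} is already minimal by checking both criteria of Proposition~\ref{pro:ft.cohn94.47} directly. A short computation identifies the left family of $\als{A}$ as $(s^f_1g,\ldots,s^f_{n_f}g,\,s^g_1,\ldots,s^g_{n_g})$ and the right family as $(t^f_1,\ldots,t^f_{n_f},\,ft^g_1,\ldots,ft^g_{n_g})$, where $(s^f_i),(t^f_i)$ and $(s^g_j),(t^g_j)$ are the families of the two given minimal systems. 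A $\field{K}$-linear dependence among the left family amounts to a nontrivial relation $ag=b$ with $a\in L(f)$ and $b\in L(g)$, and dually a dependence among the right family amounts to $fd=c$ with $d\in R(g)$ and $c\in R(f)$. The task is to show that, under $1\notin L(f)$ and $1\notin R(g)$, neither relation can occur, so that $f$ and $g$ are \emph{disjoint} (Definition~\ref{def:ft.disjoint}) and $\rank(fg)=n$.

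The hard part is exactly this last step. I expect to treat each putative dependence by feeding the collapsed system into Lemma~\ref{lem:ft.min1} to produce an admissible transformation that strips off the redundant family member, and then to read off from the resulting lower-dimensional system---passing between $g$ and $g\inv$ (resp.\ $f$ and $f\inv$) via the minimal inverse, Theorem~\ref{thr:ft.mininv}, and using the uniqueness in Lemmas~\ref{lem:ft.rt1} and~\ref{lem:ft.rt2}---that the surviving coupling forces $1\in L(f)$ or $1\in R(g)$, contradicting the hypothesis. This is precisely the phenomenon isolated in Lemmas~\ref{lem:ft.slinin} and~\ref{lem:ft.tlinin}, where the corresponding independence was established under the additional assumption that one factor is a polynomial; the present obstacle is to run the same bookkeeping of which family components survive for two arbitrary non-scalar elements of $\field{F}$. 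Once both families are shown independent, Proposition~\ref{pro:ft.cohn94.47} certifies minimality and completes all three cases.
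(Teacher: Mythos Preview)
Your treatment of the first two cases is correct and is exactly what the paper does: the first inequality in Definition~\ref{def:ft.factors} gives $\rank(fg)\ge n-1$, and the constructions of Propositions~\ref{pro:ft.mul2} and~\ref{pro:ft.mul1} give the matching upper bound.

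For the third case, however, there is a genuine gap. You propose to show directly that the left and right families of the Proposition~\ref{pro:ft.ratop} system are $\field{K}$-linearly independent, and you acknowledge that this is ``the hard part'' and that the bookkeeping of Lemmas~\ref{lem:ft.slinin} and~\ref{lem:ft.tlinin} does not obviously extend beyond the polynomial case. The problem is more serious than bookkeeping: you never invoke the \emph{second} inequality of Definition~\ref{def:ft.factors}, namely $\rank(h\inv f)+\rank(f\inv)\le\rank(h\inv)+1$. With only the first inequality and the hypotheses $1\notin L(f)$, $1\notin R(g)$, all you can conclude is $\rank(fg)\ge n-1$, one short of what you need; there is no evident mechanism in your sketch that would close this gap, since a putative relation $ag=b$ with $a\in L(f)$, $b\in L(g)$ is not ruled out by $1\notin L(f)$ and $1\notin R(g)$ alone.

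The paper avoids the family analysis entirely. It uses the second left-factor inequality, which (with $h=fg$) reads $\rank(g\inv)+\rank(f\inv)\le\rank((fg)\inv)+1$, and then applies the minimal inverse (Theorem~\ref{thr:ft.mininv}) to each side. Since $1\notin R(g)$ forces $g\inv$ to be of type $(1,1)$ or $(0,1)$, hence $1\in L(g\inv)$, and symmetrically $1\in R(f\inv)$, the pair $(g\inv,f\inv)$ falls under one of the two cases already proven; moreover $g\inv$ is a left factor of $g\inv f\inv$ by the remark following Definition~\ref{def:ft.factors}. A short case split on the remaining type bits of $f$ and $g$ then yields $\rank(fg)\ge\rank(f)+\rank(g)=n$, matching the upper bound from the construction. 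This route is both shorter and complete; the direct family-independence argument you outline is not needed.
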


\begin{proof}
Since $f$ is a left factor of $fg$, we have
$\rank(f) + \rank(g) \le \rank(fg) + 1$, thus
\begin{displaymath}
\rank(fg) \ge \rank(f) + \rank(g) - 1 = \dim \als{A} \ge \rank(fg)
\end{displaymath}
and hence minimality of $\als{A}$ if $1 \in R(g)$ or $1 \in L(f)$,
that is, the first two cases/types $(*,1)$ and $(1,*)$.
For the last case/type $(0,0)$ we distinguish four subcases.
Recall from Theorem~\ref{thr:ft.mininv} that
\begin{displaymath}
\rank(h\inv ) = 
\begin{cases}
\text{$\rank(h) - 1$} & \text{if $h$ is of type $(1,1)$}, \\
\text{$\rank(h)$} & \text{if $h$ is of type $(1,0)$ or $(0,1)$ and} \\
\text{$\rank(h)+1$} & \text{if $h$ is of type $(0,0)$}.
\end{cases}
\end{displaymath}
Since $f$ is a left factor of $fg$, we have also
\begin{displaymath}
\rank(g\inv f\inv) + 1 \ge \rank(g\inv) + \rank(f\inv).
\end{displaymath}
Hence ---by the minimal inverse on the right hand side---
\begin{displaymath}
\rank(g\inv f\inv) +1 \ge
\begin{cases}
\text{$\rank(g) + \rank(f)$} & \text{if $1 \in R(f)$ and $1 \in L(g)$}, \\
\text{$\rank(g) + 1 + \rank(f)$} & \text{if $1 \in R(f)$ and $1 \not\in L(g)$}, \\
\text{$\rank(g) + \rank(f)+1$} & \text{if $1 \not\in R(f)$ and $1 \in L(g)$ and} \\
\text{$\rank(g) +1 + \rank(f)+1$} & \text{if $1 \not\in R(f)$ and $1 \not\in L(g)$.}
\end{cases}
\end{displaymath}
Note that a priori we cannot assume minimality of $\als{A}$ for $fg$,
since this is what we have to prove. Therefore we cannot use the minimal
inverse on the left hand side because we only know that,
for example, $1 \in L(g)$ implies $1 \in L(\als{A})$ by construction.
However, by the minimal inverse, we know
---since $g$ is of type $(0,*)$--- that
$g\inv$ is of type $(1,1)$ or $(0,1)$
and $f\inv$ is of type $(1,1)$ or $(1,0)$.
Hence we can use one of the first two cases
and get $\rank(fg) \ge \rank(f) + \rank(g)$.
\end{proof}

\begin{remark}
Let $A = (a_{ij})$ be the system matrix of the ALS from
Theorem~\ref{thr:ft.minmul}.
For type $(1,*)$ there exists an $i\in \{1,2,\ldots, n_f-1 \}$
such that $a_{i,n_f}$ is \emph{non-scalar}.
For type $(*,1)$ there exists an $j\in \{ n_f+1, n_f+2, \ldots, n \}$
such that $a_{n_f, j}$ is \emph{non-scalar}.
And for type $(0,0)$ the entries $a_{i,n_f+1}$ are \emph{scalar}
for $i \in \{ 1, 2, \ldots, n_f \}$.
We refer to that as \emph{coupling conditions}.
Note that there is no transformation of the form~\eqref{eqn:ft.trans}
respecting the zero blocks yielding a ``scalar coupling''
in type $(1,*)$ (respectively type $(*,1)$)
because that would contradict minimality of $\als{A}_f$
(respectively $\als{A}_g$)
which one can see after recalling the
construction of the product in Proposition~\ref{pro:ft.ratop}.
\end{remark}

\begin{lemma}\label{lem:ft.factorization}
Let $h,f,g \in \field{F} \setminus \field{K}$ be given by the \emph{minimal}
admissible linear systems $\als{A} = (1,A,\lambda)$,
$\als{A}_f = (u_f,A_f,v_f) = (1,A_f,\lambda_f)$ and
$\als{A}_g = (u_g,A_g,v_g) = (1,A_g,\lambda_g)$
of dimension $n$, $n_f$ and $n_g$ respectively
such that $f$ is a \emph{left factor} of $h = fg$.

\medskip
\noindent
Type~$(1,*)$:
If $f$ is of type $(*,1)$ then there exists an
admissible transformation $(P,Q)$ of the form~\eqref{eqn:ft.trans}
such that $PAQ = (a_{i,j})$ has
\begin{itemize}
\item a lower left block of zeros of size $n_g \times (n_f-1)$,
\item an upper right block of zeros of size $(n_f-1) \times (n_g-1)$ and
\item there exists an $i \in \{ 1,2,\ldots, n_f-1 \}$ 
  such that $a_{i,n_f}$ is \emph{non-scalar}
\end{itemize}
Type~$(*,1)$:
If $g$ is of type $(1,*)$ then there exists an
admissible transformation $(P,Q)$ of the form~\eqref{eqn:ft.trans}
such that $PAQ = (a_{i,j})$ has
\begin{itemize}
\item a lower left block of zeros of size $(n_g-1) \times n_f$,
\item an upper right block of zeros of size $(n_f-1) \times (n_g-1)$ and
\item there exists an $j\in \{ n_f+1, n_f+2, \ldots, n \}$
  such that $a_{n_f, j}$ is \emph{non-scalar}.
\end{itemize}
Type~$(0,0)$:
If $f$ is of type $(*,0)$ and $g$ is of type $(0,*)$
then there exists an admissible transformation $(P,Q)$
of the form~\eqref{eqn:ft.trans}
such that $PAQ = (a_{i,j})$ has
\begin{itemize}
\item a lower left block of zeros of size $n_g \times n_f$,
\item an upper right block of zeros of size $n_f \times (n_g-1)$ and
\item $a_{i,n_f+1} \in \field{K}$ for $i \in \{ 1, 2, \ldots, n_f \}$.
\end{itemize}
\end{lemma}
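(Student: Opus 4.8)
The plan is to reduce everything to the uniqueness of minimal linear representations (Theorem~\ref{thr:ft.cohn99.14}). In each of the three cases the corresponding multiplication proposition already produces an ALS for $fg$ that \emph{visibly} exhibits the claimed block structure, and Theorem~\ref{thr:ft.minmul} guarantees that this ALS is \emph{minimal}. Since the given $\als{A}=(1,A,\lambda)$ is a second minimal ALS for the same element $h=fg$, the two must be isomorphic, and it remains only to check that the isomorphism can be taken of the restricted shape~\eqref{eqn:ft.trans}.

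Concretely, for type $(1,*)$ I first bring $\als{A}_f$ into the form~\eqref{eqn:ft.mul2.f} (via Lemma~\ref{lem:ft.for1}, using $1\in L(f)$) and apply Proposition~\ref{pro:ft.mul2} to obtain a minimal ALS $\als{B}$ for $fg$ of dimension $n_f+n_g-1$; for type $(*,1)$ I bring $\als{A}_g$ into the form~\eqref{eqn:ft.mul1.g} (via Lemma~\ref{lem:ft.for2}, using $1\in R(g)$) and apply Proposition~\ref{pro:ft.mul1}; for type $(0,0)$ I simply apply Proposition~\ref{pro:ft.ratop}. Reading off \eqref{eqn:ft.mul2.fg}, \eqref{eqn:ft.mul1.fg}, and the product of Proposition~\ref{pro:ft.ratop} (recall $u_g=e_1$, so that $-v_f u_g$, resp. $\lambda_f b\,u_g$ and $\lambda_f b''u_g$, occupy only a single column), one verifies directly that $\als{B}$ has the asserted lower-left and upper-right zero blocks, that its right hand side is again concentrated in the last component, and that the coupling entries are exactly $\lambda_f$ times the relevant column of $A_f$ (type $(1,*)$), resp. the relevant row of $A_g$ (type $(*,1)$), resp. equal to $-v_f$ (type $(0,0)$).

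The key step is the passage from $\als{B}$ to the given $\als{A}$. Both are minimal ALS for $h$, hence equivalent, so by Theorem~\ref{thr:ft.cohn99.14}, applied to two minimal representations of equal dimension in which the outer $*$-blocks degenerate, they are isomorphic: there exist invertible $P,Q\in\field{K}^{n\times n}$ with $\als{A}=P\als{B}Q$. After rescaling $\als{B}$ so that the last entry of its right hand side equals $\lambda$, the isomorphism relations $e_1=e_1Q$ and $\lambda e_n=P(\lambda e_n)$ force the first row of $Q$ to equal $e_1$ and the last column of $P$ to equal $e_n$. Equivalently $(P\inv,Q\inv)$, which is again of this shape since the matrices with first row $e_1$ (resp. last column $e_n$) are closed under inversion, is an \emph{admissible} transformation of the form~\eqref{eqn:ft.trans} with $P\inv\als{A}Q\inv=\als{B}$. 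This transfers all structural properties of $\als{B}$ to the transformed system.

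The main obstacle is the coupling condition, specifically guaranteeing that for types $(1,*)$ and $(*,1)$ the coupling entry is \emph{non-scalar}, since this does not follow from the zero-block shape alone. Here I would argue, as in the remark following Theorem~\ref{thr:ft.minmul}, that a scalar coupling would admit a further dimension-reducing transformation of the form~\eqref{eqn:ft.trans} respecting the zero blocks, contradicting the minimality of $\als{A}_f$ (type $(1,*)$), resp. of $\als{A}_g$ (type $(*,1)$). Because the explicitly constructed $\als{B}$ carries coupling $\lambda_f b,\lambda_f b''$ (resp. $\lambda_f b',\lambda_f b$) coming from $A_f$ (resp. $A_g$), and $f,g\notin\field{K}$ are represented minimally, at least one such entry is non-scalar, and this property is preserved under any block-respecting transformation of the form~\eqref{eqn:ft.trans}. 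For type $(0,0)$ the coupling column equals $-v_f\in\field{K}^{n_f}$ by construction, which yields the remaining (scalar) assertion immediately.
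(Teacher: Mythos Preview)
Your approach is correct and is essentially the paper's own proof: construct a minimal ALS for $fg$ via Theorem~\ref{thr:ft.minmul} (which already displays the required block and coupling structure), then invoke Theorem~\ref{thr:ft.cohn99.14} to pass between the two minimal systems by an isomorphism whose shape is forced to be~\eqref{eqn:ft.trans} because both $u$-vectors equal $e_1$ and both $v$-vectors are concentrated in the last entry. The only cosmetic difference is that the paper rescales the \emph{factors} (using $\tfrac{\lambda_g}{\lambda}\als{A}_f$ and $\tfrac{\lambda}{\lambda_g}\als{A}_g$) before multiplying so that the product ALS already has right-hand side $\lambda e_n$, whereas you rescale the product afterwards; and the paper dispatches the coupling condition in one line (``due to the construction of the minimal multiplication''), while you spell out the contradiction with minimality of $\als{A}_f$ resp.\ $\als{A}_g$---your extra clause about preservation under block-respecting transformations is not needed here, since the lemma only asks for \emph{one} transformation with non-scalar coupling.
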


\begin{proof}
Let $\als{A}'=(u',A',v') = (1,A',\lambda')$
be the minimal ALS for $h=fg$ constructed by Theorem~\ref{thr:ft.minmul}
from $\tsfrac{\lambda_g}{\lambda} \als{A}_f$ and
$\tsfrac{\lambda}{\lambda_g} \als{A}_g$.
The system matrix $A'=(a'_{ij})$ has ---by construction---
appropriate (lower left and upper right) blocks of zeros
and ---for type $(0,0)$---
scalar entries $a'_{i,n_f+1}$ for $i \in \{ 1, 2, \ldots, n_f \}$.
Since both systems $\als{A}$ and $\als{A}'$ for $h$ are minimal,
there exists, by Theorem~\ref{thr:ft.cohn99.14}, an admissible transformation
$(P,Q)$ such that $P\als{A}Q = \als{A}'$.
The right hand side $Pv = v'$ does not change, hence
$(P,Q)$ is of the form~\eqref{eqn:ft.trans}.
The coupling conditions are fulfilled due to the construction
of the minimal multiplication.
\end{proof}

\begin{Example}
To illustrate the importance of the coupling conditions
we consider $h = x\inv z y\inv x\inv$ given by the minimal ALS
\begin{displaymath}
\begin{bmatrix}
x & -z & . \\
. & y & -1 \\
. & . & x
\end{bmatrix}
s =
\begin{bmatrix}
. \\ . \\ 1
\end{bmatrix}.
\end{displaymath}
Multiplication of type $(0,1)$ for $n_f = n_g = 2$ would
violate the coupling condition, ``creating'' a \emph{non-minimal}
ALS for $g$ in $h = fg$.
\end{Example}

\begin{lemma}[Factorization Type~$(1,*)$]\label{lem:ft.fact2}
Let $h = fg \in \field{F}\setminus\field{K}$ be given
by the \emph{minimal} admissible linear system $\als{A} = (u,A,v) = (1,A,\lambda)$
of dimension $n \ge 2$ and fix $1 < k \le n$.
Assume that $A$ has
a lower left block of zeros of size $(n-k+1)\times (k-1)$
and an upper right block of zeros of size $(k-1)\times (n-k)$.
For a transformation $(P,Q)$ let $a'_{ij}$ denote the entries of $PAQ$.
If for \emph{each} transformation $(P,Q)$ of the form~\eqref{eqn:ft.trans}
respecting these zero blocks there exists an $i\in \{1,2,\ldots, k-1 \}$
such that $a'_{i,k}$ is \emph{non-scalar}
then $f$ is a \emph{left factor} of type $(*,1)$ of $h$
with $\rank(f) = k$ and $\rank(g) = n-k+1$.
\end{lemma}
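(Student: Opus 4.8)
The plan is to read candidate admissible linear systems for $f$ and $g$ directly off the block structure of $A$, to use the coupling hypothesis to force minimality of the one for $f$, and then to deduce the rank of $g$ together with the two left-factor inequalities.

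First I would split the rows and columns of $A$ at the index $k$ into the groups $\{1,\dots,k-1\}$, $\{k\}$ and $\{k+1,\dots,n\}$. The hypothesised zero blocks say precisely that $A$ is block upper triangular, $A=\bigl[\begin{smallmatrix}A_{11}&C\\ .&A_g\end{smallmatrix}\bigr]$, where $A_{11}$ is the $(k-1)\times(k-1)$ upper left block, $A_g$ is the $(n-k+1)\times(n-k+1)$ lower right block, and $C$ has its only nonzero column in position $k$, namely the coupling column $c=(a_{i,k})_{i=1}^{k-1}$. Since $A$ is the system matrix of a minimal ALS it is invertible over $\field{F}$, and a block triangular matrix over the (skew) field $\field{F}$ is invertible iff its diagonal blocks are; hence $A_{11}$ and $A_g$ are invertible over $\field{F}$, that is, full. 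Thus $\als{A}_g=(e_1,A_g,v_g)$ (with $v_g$ the last $n-k+1$ entries of $v$) is an ALS of dimension $n-k+1$, and $\als{A}_f=\bigl(e_1,\bigl[\begin{smallmatrix}A_{11}&c\\ .&1\end{smallmatrix}\bigr],e_k\bigr)$ is an ALS of dimension $k$ whose last row is $[0,\dots,0,1]$. Solving the triangular system $As=v$ gives $s_i=s^f_i\,g$ for $i\le k$ (where $s^f$ is the left family of $\als{A}_f$, with $s^f_k=1$) and identifies $(s_k,\dots,s_n)$ with the left family of $\als{A}_g$; in particular $h=fg$ with $f=s^f_1$ and $g=s_k$, and $\rank f\le k$, $\rank g\le n-k+1$.

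The core step is minimality of $\als{A}_f$, for which (by Proposition~\ref{pro:ft.cohn94.47}) I must show both its families are $\field{K}$-linearly independent. The left family is automatic: right-multiplying its entries by $g$ yields the first $k$ members $s^f_i g=s_i$ of the left family of the minimal system $A$, which are independent. For the right family I use that the right family of $A$ reads $(t^f_1,\dots,t^f_{k-1},ft^g_1,\dots,ft^g_{n_g})$ (read off the block inverse, as in the construction of Proposition~\ref{pro:ft.mul2}), so that $t^f_1,\dots,t^f_{k-1}$ are independent and the only possible relation in the right family $(t^f_1,\dots,t^f_{k-1},t^f_k)$ of $\als{A}_f$ is $f=t^f_k=\sum_{j<k}\gamma_j t^f_j$ with $\gamma_j\in\field{K}$. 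Assuming such a relation, I would add $\sum_{j<k}(U)_j$ times column $j$ to column $k$, where $U=A_{11}^{-1}\bigl(-(\gamma_j)_j-c\bigr)$; this replaces $c$ by the constant vector $-(\gamma_j)_j$, i.e.\ makes every $a'_{i,k}$ with $i<k$ scalar. Crucially, this column operation respects the zero blocks (columns $1,\dots,k-1$ and $k+1,\dots,n$ are untouched, and columns $1,\dots,k-1$ vanish below row $k-1$) and is admissible: its first coordinate is $(U)_1=\sum_j t^f_j(-\gamma_j)-(A_{11}^{-1}c)_1=-f-(-f)=0$, using $t^f_j=(A_{11}^{-1})_{1j}$, $(A_{11}^{-1}c)_1=-f$ and the relation itself. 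Hence $(I,Q)$ is a transformation of the form~\eqref{eqn:ft.trans} respecting the zero blocks whose coupling column is scalar, contradicting the hypothesis. Therefore the right family is independent, $\als{A}_f$ is minimal, $\rank f=k$, and $1\in L(f)$ (the last left-family entry is $s^f_k=1$); that is, $f$ is of type $(*,1)$.

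It then remains to pin down $\rank g$ and to verify that $f$ is a left factor. Since $\rank f=k$, $1\in L(f)$ and $\rank(fg)=\rank h=n$, Proposition~\ref{pro:ft.mul2} produces an ALS for $fg$ of dimension $\rank f+\rank g-1$; if $\rank g\le n-k$ this dimension would be $<n$, contradicting minimality of $A$, so $\rank g=n-k+1$. For the two inequalities of Definition~\ref{def:ft.factors}: the first, $\rank f+\rank(f^{-1}h)=k+(n-k+1)=n+1\le\rank h+1$, holds with equality; the second, $\rank(h^{-1}f)+\rank(f^{-1})\le\rank(h^{-1})+1$ with $h^{-1}f=g^{-1}$, is obtained by writing each inverse-rank via Theorem~\ref{thr:ft.mininv} as $\rank(\cdot)+\epsilon$ with $\epsilon\in\{-1,0,1\}$ fixed by the type, so that (using $\rank f+\rank g=\rank h+1$) it reduces to the type inequality $\epsilon_f+\epsilon_g\le\epsilon_h$; this last is checked from the explicit forms of $L(h)$ and $R(h)$ determined by the product families above (note $\epsilon_f\le 0$ since $1\in L(f)$). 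The main obstacle is the core step: verifying that the relation witnessing dependence of the right family of $\als{A}_f$ yields a \emph{legitimate} (admissible, structure-respecting) transformation of the form~\eqref{eqn:ft.trans}, i.e.\ that $(U)_1=0$; the remaining arguments are bookkeeping on the block structure and on the minimal inverse.
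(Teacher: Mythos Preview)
Your overall strategy matches the paper's: reverse the construction of Proposition~\ref{pro:ft.mul2}, read off candidate systems $\als{A}_f$ and $\als{A}_g$ from the block structure, and use the coupling hypothesis to force minimality of $\als{A}_f$. The paper compresses this last point to a single parenthetical (``minimal for $f$ due to the coupling condition''), so your explicit contrapositive argument is in the same spirit with added detail.

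There is, however, a genuine gap in your core step. You define $U=A_{11}^{-1}\bigl(-(\gamma_j)_j-c\bigr)$ and then verify $(U)_1=0$, but as written $U$ lies only in $\field{F}^{k-1}$. Transformations of the form~\eqref{eqn:ft.trans} have entries in $\field{K}$, so for your column operation to qualify you must have $U\in\field{K}^{k-1}$, not merely $(U)_1=0$. What saves the argument is Lemma~\ref{lem:ft.rt2}: your computation $(U)_1=0$ is exactly the hypothesis $tB=0$ for $t=(t^f_1,\ldots,t^f_{k-1})=e_1A_{11}^{-1}$ and the linear column $B=-\gamma-c$, and since you have already established that this $t$ is $\field{K}$-linearly independent (it is the initial segment of the right family of the minimal $\als{A}$), Lemma~\ref{lem:ft.rt2} supplies a \emph{unique} $U\in\field{K}^{(k-1)\times 1}$ with $B=A_{11}U$. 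Both the scalarity of $U$ and the admissibility condition $(U)_1=0$ then come for free. In other words, the ``main obstacle'' you flag is not admissibility but rather ensuring $U$ is scalar in the first place; once you invoke Lemma~\ref{lem:ft.rt2}, both issues dissolve at once.

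Your reduction of the second left-factor inequality to $\epsilon_f+\epsilon_g\le\epsilon_h$ is a clean reformulation; the paper does the equivalent thing by a four-way split on the type of $h$, deducing the types of $f$ and $g$ in each case from the explicit product families. You should still carry out that check rather than asserting it, since the relation between the types of $f$, $g$, and $h$ through $L(h)$, $R(h)$ is not entirely automatic (for instance, $1\in L(h)$ does not obviously force $1\in L(g)$ from the product left family alone).
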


\begin{proof}
By assumption, $\als{A}$ is of the (block) form
\begin{displaymath}
\begin{bmatrix}
A_{1,1} & A_{1,2} & . \\
. & A_{2,2} & A_{2,3} \\
. & A_{3,2} & A_{3,3} 
\end{bmatrix}
\begin{bmatrix}
s_{\block{1}} \\ s_k \\ s_{\block{3}}
\end{bmatrix}
=
\begin{bmatrix}
. \\ . \\ v_{\block{3}}
\end{bmatrix}
\end{displaymath}
with square diagonal blocks $A_{1,1}$, $A_{2,2}$ and $A_{3,3}$ of size
$k-1$, $1$ and $n-k-1$ respectively.
We duplicate the entry $s_k$ in the left family by inserting a ``dummy''
row (and column) to get the following ALS of dimension $n+1$:
\begin{displaymath}
\begin{bmatrix}
A_{1,1} & A_{1,2} & 0 & . \\
0 & 1 & -1 & 0 \\
. & 0 & A_{2,2} & A_{2,3} \\
. & 0 & A_{3,2} & A_{3,3} 
\end{bmatrix}
\begin{bmatrix}
s_{\block{1}} \\ s_k \\ s_k \\ s_{\block{3}}
\end{bmatrix}
=
\begin{bmatrix}
. \\ . \\ . \\ v_{\block{3}}
\end{bmatrix},
\end{displaymath}
that is, ``reversing'' the construction from Proposition~\ref{pro:ft.mul2}.
The subsystems of dimension $k$ and $n-k+1$ are \emph{minimal} for
$f$ (due to the coupling condition) and $g = \mu s_k$ respectively,
otherwise we could construct an ALS for $h$ of dimension $n' < n$,
contradicting minimality of $\als{A}$.
Clearly, $1 \in L(f)$.
By construction we have $\rank(f) + \rank(g) = \rank(h) + 1$,
thus we only have to show that
$\rank(g\inv) + \rank(f\inv) \le \rank(h\inv) + 1$
for $f$ to be a left factor of $h$
by distinguishing four cases (like in the minimal multiplication)
and apply the minimal inverse. If $h$ is of type $(1,1)$,
then $f$ is of type $(1,1)$ and $g$ is of type $(*,1)$.
Thus $\rank(g\inv)\le \rank(g)$ and we get
$\rank(g\inv) + \rank(f\inv) \le \rank(g) + \rank(f)-1 = \rank(h\inv)+1$.
The other cases are as easy.
\end{proof}

\begin{lemma}[Factorization Type~$(*,1)$]\label{lem:ft.fact1}
Let $h = fg \in \field{F}\setminus\field{K}$ be given
by the \emph{minimal} admissible linear system $\als{A} = (u,A,v) = (1,A,\lambda)$
of dimension $n \ge 2$ and fix $1 \le k < n$.
Assume that $A$ has
a lower left block of zeros of size $(n-k)\times k$
and an upper right block of zeros of size $(k-1)\times (n-k)$.
For a transformation $(P,Q)$ let $a'_{ij}$ denote the entries of $PAQ$.
If for \emph{each} transformation $(P,Q)$ of the form~\eqref{eqn:ft.trans}
respecting these zero blocks there exists an $j\in \{k+1, k+2, \ldots, n \}$
such that $a'_{k,j}$ is \emph{non-scalar}
then $f$ is a \emph{left factor} of type $(*,1)$ of $h$
with $\rank(f) = k$ and $\rank(g) = n-k+1$.
\end{lemma}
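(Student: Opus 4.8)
The plan is to dualize the proof of Lemma~\ref{lem:ft.fact2}, replacing the left family by the right family throughout and reversing the construction in Proposition~\ref{pro:ft.mul1} (Multiplication Type~$(*,1)$) in place of Proposition~\ref{pro:ft.mul2}. First I would record the block shape forced by the two zero blocks. Writing $t_k$ for the $k$-th entry of the right family $t=uA\inv$, the assumed lower left ($(n-k)\times k$) and upper right ($(k-1)\times(n-k)$) zero blocks put $\als{A}$ in the form
\[
\begin{bmatrix}
A_{1,1} & A_{1,2} & . \\
A_{2,1} & A_{2,2} & A_{2,3} \\
. & . & A_{3,3}
\end{bmatrix},
\]
with square diagonal blocks $A_{1,1}$, $A_{2,2}$, $A_{3,3}$ of sizes $k-1$, $1$, $n-k$; the upper right zeros sit in rows $1,\ldots,k-1$, the lower left zeros in columns $1,\ldots,k$, and the coupling $A_{2,3}$ occupies row $k$ in columns $k+1,\ldots,n$.

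Next I would insert a ``dummy'' column (and matching row) duplicating the right-family entry $t_k$, exactly reversing the row-deletion step in the proof of Proposition~\ref{pro:ft.mul1}. This yields an ALS of dimension $n+1$ whose top-left $k\times k$ block is a system $\als{A}_f$ for some $f$ and whose bottom-right $(n-k+1)\times(n-k+1)$ block is a system $\als{A}_g$ for some $g$ with $h=fg$. By construction the inserted column makes the first column of $\als{A}_g$ equal to $e_1$, so $1\in R(g)$, that is, $g$ is of type $(1,*)$.

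Then I would argue minimality of both subsystems, which gives $\rank f=k$ and $\rank g=n-k+1$ and hence $\rank f+\rank g=\rank h+1$ --- the first defining inequality for a left factor, with equality. Here the coupling hypothesis is essential: if $\als{A}_g$ were non-minimal one could compress it and re-multiply via Proposition~\ref{pro:ft.mul1} to produce an ALS for $h$ of dimension $<n$, contradicting minimality of $\als{A}$; the assumption that some $a'_{k,j}$ ($k+1\le j\le n$) stays \emph{non-scalar} under \emph{every} admissible transformation of the form~\eqref{eqn:ft.trans} respecting the zero blocks is precisely what forbids $g$ from degenerating to type $(0,*)$ and the block from collapsing.

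Finally, for the second inequality $\rank(g\inv)+\rank(f\inv)\le\rank(h\inv)+1$ I would run the same four-case analysis as in Lemma~\ref{lem:ft.fact2}, feeding the types of $f$ and $g$ through the minimal inverse (Theorem~\ref{thr:ft.mininv}): since $g$ is of type $(1,*)$ one always has $\rank(g\inv)\le\rank g$, and the four cases for the type of $h$ close exactly as there. The step I expect to be the genuine obstacle is the minimality of $\als{A}_g$, where one must check that the persistently non-scalar coupling really rules out the rank-reducing collapse; the insertion, the identification $1\in R(g)$, and the four-case rank bookkeeping are then routine dualizations of the Type~$(1,*)$ argument.
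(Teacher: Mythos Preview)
Your plan matches the paper's proof: dualize Lemma~\ref{lem:ft.fact2}, insert a dummy column to undo Proposition~\ref{pro:ft.mul1}, extract the two subsystems, observe $1\in R(g)$, and finish with the same four-case inverse bookkeeping.

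One ordering point deserves tightening. In the paper the coupling hypothesis is used \emph{directly} to force minimality of $\als{A}_g$: if $\als{A}_g$ were non-minimal then (since its right family $t_k^{-1}(t_k,\ldots,t_n)$ is automatically $\field{K}$-independent) its left family would be dependent, and via Lemma~\ref{lem:ft.rt1} one produces a block-respecting transformation of the form~\eqref{eqn:ft.trans} making the entire coupling row $A_{2,3}$ scalar, contradicting the hypothesis. Only after $\als{A}_g$ is known to be minimal does ``first column $e_1$'' genuinely yield $1\in R(g)$. Your ``compress $\als{A}_g$ and re-multiply via Proposition~\ref{pro:ft.mul1}'' step presupposes $1\in R(g)$ for the \emph{minimal} representation of $g$, which is not yet available at that point; the re-multiply contradiction is the argument the paper reserves for $\als{A}_f$, not $\als{A}_g$. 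You correctly flag this as the real obstacle --- just swap the mechanism.
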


\begin{proof}
By assumption, $\als{A}$ is of the (block) form
\begin{displaymath}
\begin{bmatrix}
u_{\block{1}} & . & . 
\end{bmatrix}
= 
\begin{bmatrix}
t_{\block{1}} & t_k & t_{\block{3}}
\end{bmatrix} 
\begin{bmatrix}
A_{1,1} & A_{1,2} & . \\
A_{2,1} & A_{2,2} & A_{2,3} \\
. & . & A_{3,3}
\end{bmatrix}
\end{displaymath}
with square diagonal blocks $A_{1,1}$, $A_{2,2}$ and $A_{3,3}$
of size $k-1$, $1$ and $n-k$ respectively. We duplicate
the entry $t_k$ in the right family by inserting a ``dummy'' column
(and row) to get the following ALS of dimension $n+1$:
\begin{displaymath}
\begin{bmatrix}
u_{\block{1}} & . & . & . 
\end{bmatrix}
= 
\begin{bmatrix}
t_{\block{1}} & t_k & t_k & t_{\block{3}}
\end{bmatrix} 
\begin{bmatrix}
A_{1,1} & A_{1,2} & 0 & . \\
A_{2,1} & A_{2,2} & -1 & 0 \\
0 & 0 & 1 & A_{2,3} \\
. & . & 0 & A_{3,3}
\end{bmatrix},
\end{displaymath}
that is, ``reversing'' the construction from Proposition~\ref{pro:ft.mul1}.
The subsystems of dimension $k$ and $n-k+1$ are \emph{minimal} for
$f = \mu t_k$ and $g$ (due to the coupling condition) respectively,
otherwise we could construct an ALS for $h$
of dimension $n' < n$, contradicting minimality of $\als{A}$.
Clearly, $1 \in R(g)$. 
Showing that $f$ is a left factor of $h=fg$ is like
in Lemma~\ref{lem:ft.fact2}.
\end{proof}

\begin{lemma}[Factorization Type~$(0,0)$]\label{lem:ft.fact0}
Let $h = fg \in \field{F}\setminus\field{K}$ be given
by the \emph{minimal} admissible linear system $\als{A} = (u,A,v) = (1,A,\lambda)$
of dimension $n \ge 2$ and fix $1 \le k < n$.
If $A = (a_{ij})$ has
a lower left block of zeros of size $(n-k)\times k$,
an upper right block of zeros of size $k\times (n-k-1)$
and $a_{i,k+1} \in \field{K}$ for $i \in \{ 1, 2, \ldots, k \}$
then $f$ is a \emph{left factor} of type $(*,0)$ of $h$ with $\rank(f) = k$
and $g$ is of type $(0,*)$ with $\rank(g) = n-k$.
\end{lemma}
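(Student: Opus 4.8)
The plan is to \emph{reverse} the type-$(0,0)$ multiplication of Proposition~\ref{pro:ft.ratop} directly; unlike in Lemmas~\ref{lem:ft.fact2} and~\ref{lem:ft.fact1}, no ``dummy'' row/column has to be inserted, because the type-$(0,0)$ product does not drop the dimension. Splitting the indices into $\{1,\ldots,k\}$ and $\{k+1,\ldots,n\}$, the three hypotheses together with $v=[0,\ldots,0,\lambda]\trp$ say precisely that
\begin{displaymath}
A=\begin{bmatrix} A_f & -v_f\,e_1 \\ . & A_g \end{bmatrix}
\quad\text{and}\quad
v=\begin{bmatrix} . \\ v_g \end{bmatrix},
\end{displaymath}
where $A_f$ is the top left $k\times k$ block, $A_g$ the bottom right $(n-k)\times(n-k)$ block, $e_1=[1,0,\ldots,0]$, the coupling column is the scalar vector $v_f=-(a_{i,k+1})_{i=1}^{k}\in\field{K}^{k\times 1}$ (the upper right zeros annihilate the remaining coupling columns), and the top $k$ entries of $v$ vanish since $k<n$. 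Reading off $\als{A}_f=(e_1,A_f,v_f)$ of dimension $k$ and $\als{A}_g=(e_1,A_g,v_g)$ of dimension $n-k$, this is exactly $\als{A}=\als{A}_f\cdot\als{A}_g$, so $h=fg$ for the elements $f,g$ represented by these subsystems.

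I would then pin down the ranks and the types. If $\als{A}_f$ or $\als{A}_g$ were non-minimal, assembling the product again by Proposition~\ref{pro:ft.ratop} out of \emph{minimal} systems for $f$ and $g$ would yield an ALS for $h$ of dimension $\rank(f)+\rank(g)<k+(n-k)=n$, contradicting minimality of $\als{A}$; hence both subsystems are minimal and $\rank(f)=k$, $\rank(g)=n-k$. For the types, suppose $1\in L(f)$. By Lemma~\ref{lem:ft.for1} we could bring $\als{A}_f$ to the shape~\eqref{eqn:ft.mul2.f}, and Proposition~\ref{pro:ft.mul2} would then produce an ALS for $fg=h$ of dimension $\rank(f)+\rank(g)-1=n-1$, again contradicting $\rank(h)=n$; so $1\notin L(f)$, i.e.\ $f$ is of type~$(*,0)$. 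Symmetrically, $1\in R(g)$ would, via Lemma~\ref{lem:ft.for2} and Proposition~\ref{pro:ft.mul1}, force $\rank(h)\le n-1$, so $g$ is of type~$(0,*)$.

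It remains to check that $f$ is a \emph{left factor} of $h$, i.e.\ the two inequalities of Definition~\ref{def:ft.factors}. The first is immediate: $\rank(f)+\rank(f\inv h)=\rank(f)+\rank(g)=n\le n+1=\rank(h)+1$. For the second, $\rank(h\inv f)+\rank(f\inv)=\rank(g\inv)+\rank(f\inv)\le\rank(h\inv)+1$, I would argue as in Lemma~\ref{lem:ft.fact2}, distinguishing the four types of $h$ and applying the minimal inverse (Theorem~\ref{thr:ft.mininv}). The left and right families of the product,
\begin{displaymath}
L(h)=L(f)\,g\oplus L(g)
\quad\text{and}\quad
R(h)=R(f)\oplus f\,R(g),
\end{displaymath}
give the type correspondence $1\in R(h)\Leftrightarrow 1\in R(f)$ and $1\in L(h)\Leftrightarrow 1\in L(g)$. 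Writing $\rank(f\inv)=\rank(f)+\delta_f$, $\rank(g\inv)=\rank(g)+\delta_g$ and $\rank(h\inv)=n+\delta_h$, the minimal inverse gives $\delta_f,\delta_g\in\{0,1\}$ with $\delta_f=1$ exactly when $1\notin R(f)$ and $\delta_g=1$ exactly when $1\notin L(g)$, while $\delta_h\in\{-1,0,1\}$ is governed by the type of $h$. By the correspondence $1\notin R(h)$ holds iff $\delta_f=1$ and $1\notin L(h)$ iff $\delta_g=1$, whence $\delta_h=\delta_f+\delta_g-1$ in all four cases; thus $\rank(g\inv)+\rank(f\inv)=n+\delta_f+\delta_g=n+\delta_h+1=\rank(h\inv)+1$ and the second inequality holds with equality.

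The bookkeeping---the ranks, the first inequality, and the minimal-inverse table---is routine; the genuinely delicate point, which I expect to cost the most work, is the type correspondence, in particular its two non-trivial implications $1\in R(h)\Rightarrow 1\in R(f)$ and $1\in L(h)\Rightarrow 1\in L(g)$. The reverse implications are clear from $R(f)\subseteq R(h)$ and $L(g)\subseteq L(h)$, but they yield only the wrong bound $\delta_h\le\delta_f+\delta_g-1$; the forward ones must be extracted from the direct-sum decompositions above by a closer analysis of the families, using that $g$ is of type~$(0,*)$ (so $1\notin R(g)$) and the $\field{K}$-linear independence of the families of the minimal system $\als{A}$ (Proposition~\ref{pro:ft.cohn94.47}): a nonzero $f\,R(g)$-component of $1$ inside $R(h)$ has to be ruled out in the spirit of the computations in Lemma~\ref{lem:ft.slinin} and Lemma~\ref{lem:ft.special}.
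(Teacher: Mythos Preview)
Your proposal is correct and follows the paper's approach exactly: extract the two subsystems from the block structure (reversing Proposition~\ref{pro:ft.ratop}), establish their minimality and the types $(*,0)$, $(0,*)$ by contradiction against $\rank(h)=n$, and then verify the left-factor inequalities via the minimal-inverse case analysis of Lemma~\ref{lem:ft.fact2}. Your explicit bookkeeping via $\delta_h=\delta_f+\delta_g-1$ is in fact more detailed than the paper, which simply says the argument is ``similar to that in (the proof of) Lemma~\ref{lem:ft.fact2}''; the type correspondence you correctly flag as the delicate point is likewise only asserted there rather than worked out.
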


\begin{proof}
We get the subsystems $\als{A}_f$ (for $f$) and $\als{A}_g$ (for $g$)
directly from the construction of the multiplication in Proposition~\ref{pro:ft.ratop}.
Non-minimality of one of them would contradict
minimality of $\als{A}$. As would $1 \in L(f)$ or $1 \in R(g)$
using multiplication type~$(*,1)$ and~$(1,*)$ respectively.
The arguments for showing that $f$ is left factor of $h=fg$
are similar to that in (the proof of) Lemma~\ref{lem:ft.fact2}.
\end{proof}

\begin{theorem}[Free Factorization]\label{thr:ft.factorization}
Let $h \in \field{F}$ with $n = \rank(h) \ge 2$
be given by the \emph{minimal} admissible linear system $\als{A} = (u,A,v)$.
Then $h$ has a proper left factor $f$ with $\rank(f) = k$ if and only if
there exists an admissible transformation $(P,Q)$ of the form~\eqref{eqn:ft.trans}
such that $PAQ$ is of ``type'' $(1,*)$, $(*,1)$ or $(0,0)$
as in Figure~\ref{fig:ft.minmul}, page~\pageref{fig:ft.minmul}.
\end{theorem}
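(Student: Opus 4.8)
The plan is to recognize that this theorem is simply the two-directional packaging of the results established immediately before it, so the whole argument splits along the biconditional, with the three ``types'' in Figure~\ref{fig:ft.minmul} handled case by case. No new machinery is needed; everything is an application of Lemma~\ref{lem:ft.factorization} in one direction and of Lemmas~\ref{lem:ft.fact2}, \ref{lem:ft.fact1} and~\ref{lem:ft.fact0} in the other.

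For the forward direction I would start from a \emph{proper} left factor $f$ of $h$ with $\rank(f)=k$. Properness means neither $f$ nor $g := f\inv h$ lies in $\field{K}$, so $h=fg$ with $h,f,g\in\field{F}\setminus\field{K}$ and $f$ a left factor of $h=fg$ by hypothesis; this is exactly the setting of Lemma~\ref{lem:ft.factorization}. I would then invoke the trichotomy ``$1\in L(f)$'', ``$1\in R(g)$'', or ``$1\notin L(f)$ and $1\notin R(g)$'', which is visibly exhaustive. The three branches are precisely the three cases of Lemma~\ref{lem:ft.factorization} and yield, respectively, an admissible $(P,Q)$ of the form~\eqref{eqn:ft.trans} bringing $PAQ$ into the type $(1,*)$, $(*,1)$ or $(0,0)$ block shape of Figure~\ref{fig:ft.minmul}, coupling conditions included.

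For the converse I would assume some admissible $(P,Q)$ of the form~\eqref{eqn:ft.trans} puts $PAQ$ into one of the three block shapes and apply the matching factorization lemma: type $(1,*)$ is Lemma~\ref{lem:ft.fact2}, type $(*,1)$ is Lemma~\ref{lem:ft.fact1}, and type $(0,0)$ is Lemma~\ref{lem:ft.fact0}. Each returns a left factor $f$ of $h$ of the prescribed rank, so the only real work here is the translation of the Figure's block sizes into the lemmas' hypotheses: writing $n_f=k$, one has the lower-left block $(n-k+1)\times(k-1)$ or $(n-k)\times k$, the upper-right block $(k-1)\times(n-k)$ or $k\times(n-k-1)$, and the non-scalar coupling on the $k$-th row or column, exactly as demanded by Lemmas~\ref{lem:ft.fact2}--\ref{lem:ft.fact0}.

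I expect the main obstacle to be bookkeeping rather than anything deep. One must keep the two slightly different rank ledgers straight: the minimal product ALS has dimension $n_f+n_g-1$ in the types $(1,*)$ and $(*,1)$ (Propositions~\ref{pro:ft.mul2} and~\ref{pro:ft.mul1}) but the full dimension $n_f+n_g$ in type $(0,0)$ (Proposition~\ref{pro:ft.ratop}), so with $n_f=k$ fixed one gets $n_g=n-k+1$ in the former two cases and $n_g=n-k$ in the latter, which is what makes the block sizes line up. One must also verify that the coupling condition demanded in Lemma~\ref{lem:ft.factorization} and the one supplied to Lemmas~\ref{lem:ft.fact2}--\ref{lem:ft.fact0} are literally the same ``for all admissible transformations respecting the zero blocks'' statement, so that the two directions are genuine inverses; this is where the subtlety flagged after Theorem~\ref{thr:ft.minmul} (that types $(1,*)$ and $(*,1)$ cannot be degenerated to $(0,0)$) is actually used. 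Finally, a short check that properness of $f$ corresponds to $2\le k\le n-1$ (so that both $f,g\in\field{F}\setminus\field{K}$ and the lemmas apply on each side) completes the argument, and the theorem follows by assembling the cited results.
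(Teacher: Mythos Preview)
Your proposal is correct and matches the paper's own proof essentially verbatim: forward direction by Lemma~\ref{lem:ft.factorization}, converse by the three factorization Lemmas~\ref{lem:ft.fact2}, \ref{lem:ft.fact1}, \ref{lem:ft.fact0}, one per type. One small caveat on your final bookkeeping remark: properness does \emph{not} force $2\le k\le n-1$ (rank-one non-scalars such as $x\inv$ exist), but what Lemma~\ref{lem:ft.factorization} actually needs is only $f,g\in\field{F}\setminus\field{K}$, and this does follow from properness, so your argument still goes through.
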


\begin{proof}
Assuming a proper left factor of rank~$k$,
Lemma~\ref{lem:ft.factorization} applies.
Conversely, assuming such a transformation,
we get a proper left factor of rank~$k$
by Lemma~\ref{lem:ft.fact2} for type~$(1,*)$,
by Lemma~\ref{lem:ft.fact1} for type~$(*,1)$ and
by Lemma~\ref{lem:ft.fact0} for type~$(0,0)$.
\end{proof}

Fixing a rank of a possible left factor
in $\freeFLD{\aclo{\field{K}}}{X}$,
a variant of 
\cite[Theorem~4.1]{Cohn1999a}
\ can be used to detect the lower left and upper right block of zeros
(of appropriate sizes depending on the type of factorization).
Notice that there is a misprint, the coefficients corresponding
to $1 \in X^*$ are missing. Here we have
\begin{align*}
\field{K}[\alpha,\beta] = \field{K}[
  &\alpha_{1,1},\ldots,\alpha_{1,n-1},\alpha_{2,1},\ldots,\alpha_{2,n-1},
  \ldots, \alpha_{n,1},\ldots,\alpha_{n,n-1}, \\
  &\beta_{2,1},\ldots,\beta_{2,n},\beta_{3,1},\ldots,\beta_{3,n},
  \ldots,\beta_{n,1},\ldots,\beta_{n,n} ].
\end{align*}
The coupling conditions for type $(0,0)$ have to be implemented
directly by adding the coefficients corresponding
to $x \in X$ for the ``coupling vector''.
For type $(1,*)$ and $(*,1)$ one can test for a
``scalar'' coupling first. If there is no solution
one can try to find an appropriate transformation
for the zero blocks only.

\begin{Example}\label{ex:ft.factorization}
Let the element $f \in \freeFLD{\numQ}{X}$ be given by the
\emph{minimal} ALS $\als{A} = (u,A,v)$,
\begin{displaymath}
\begin{bmatrix}
-1 & . & x & -1 \\
1+x & x & -1 & . \\
y & 1 & x & -1 \\
x & . & -2 & x
\end{bmatrix}
s =
\begin{bmatrix}
. \\ . \\ . \\ 1
\end{bmatrix}.
\end{displaymath}
Before we start a ``brute force'' attack and try to find
(admissible) transformations $(P,Q)$, say for multiplication
type $(0,0)$ and two systems of dimension~2, we can easily
find out that $f$ is \emph{regular}. If it cannot be tranformed
into a polynomial form, that is, $f$ is no polynomial,
we could check if $f\inv \in \freeALG{\numQ}{X}$.

Now we try to find a \emph{left factor} $f_1$ of type $(*,0)$
with rank $n_1 = 2$ 
and a \emph{right factor} $f_2$ of type $(0,*)$ with rank $n_2 = 2$,
that is, minimal multiplication type $(0,0)$.
We need an \emph{invertible} transformation $(P,Q)$ of the form
\eqref{eqn:ft.trans} such that $P A Q = (a'_{i,j})$ has a
$2 \times 2$ lower left and a $2 \times 1$ upper right
block of zeros and $a'_{1,3},a'_{2,3} \in \numQ$.
Additional to $\det P =1$ and $\det Q = 1$ we have
$12 + 6 + 4$~equations. A Gröbner basis for the ideal
generated by these 24~equations (computed by \textsc{FriCAS}
\cite{FRICAS2018}
, using \emph{lexicographic order}) is
\begin{align*}
( &\alpha_{1,1} + \alpha_{1,2}\beta_{2,2}\beta_{3,3}\beta_{3,4} + \alpha_{1,3} , 
  \quad \alpha_{1,2} \alpha_{2,3}\alpha_{3,1} -\alpha_{1,3}\alpha_{2,2}\alpha_{3,1} -1, \\&\alpha_{2,1} + \alpha_{2,2}\beta_{2,2}\beta_{3,3}\beta_{3,4} + \alpha_{2,3},
  \quad \alpha_{3,2}, \quad \alpha_{3,3}, \quad \alpha_{4,2}, \quad \alpha_{4,3}, \\
&\beta_{2,2}\beta_{3,3}^2 \beta_{3,4} - \beta_{2,3},
  \quad \beta_{2,2}\beta_{3,3}\beta_{4,4} - \beta_{2,2}\beta_{3,4}\beta_{4,3} -1,
  \quad \beta_{2,3}^2, \quad \beta_{2,3}\beta_{3,4}, \\
&\beta_{2,3}\beta_{4,4} - \beta_{3,3}\beta_{3,4},
  \quad \beta_{2,4}, \quad \beta_{3,1}, \quad \beta_{3,2}, \quad \beta_{3,4}^2, 
  \quad \beta_{4,1}+1, \quad \beta_{4,2} ).
\end{align*}
Since $\beta_{3,4}=0$, the transformation $(P,Q)$ is of the form
\begin{displaymath}
(P,Q) = \left(
\begin{bmatrix}
\alpha_{1,1} & \alpha_{1,2} & -\alpha_{1,1} & . \\
\alpha_{2,1} & \alpha_{2,2} & -\alpha_{2,1} & . \\
\alpha_{3,1} & 0 & 0 & . \\
\alpha_{4,1} & 0 & 0 & 1 \\
\end{bmatrix},
\begin{bmatrix}
1 & . & . & . \\
\beta_{2,1} & \beta_{2,2} & 0 & 0 \\
0 & 0 & \beta_{3,3} & 0 \\
-1 & 0 & \beta_{4,3} & \beta_{4,4} \\
\end{bmatrix}
\right)
\end{displaymath}
with a solution over $\numQ$:
\begin{displaymath}
(P,Q) = \left(
\begin{bmatrix}
2 & 0 & -2 & . \\
0 & 1 & 0 & . \\
\frac{1}{2} & 0 & 0 & . \\
0 & 0 & 0 & 1
\end{bmatrix},
\begin{bmatrix}
1 & . & . & . \\
0 & 1 & 0 & 0 \\
0 & 0 & 1 & 0 \\
-1 & 0 & 0 & 1
\end{bmatrix}
\right).
\end{displaymath}
The transformed system $P \als{A} Q$ is
\begin{displaymath}
\begin{bmatrix}
-2 -2y & -2 & . & 0 \\
1+x & x & -1 & 0 \\
0 & 0 & \frac{1}{2} x & -\frac{1}{2} \\
0 & 0 & -2 & x 
\end{bmatrix}
s =
\begin{bmatrix}
. \\ . \\ . \\ 1
\end{bmatrix},
\end{displaymath}
that is, $f = f_1 f_2$, with
$f_1 = (1-xy)\inv$ and $f_2 =  (x^2 - 2)\inv$,
which can be seen easily after applying the minimal
inverse on the two subsystems of dimension $n_1 = n_2 = 2$.
Both factors $f_1,f_2$ are atoms.
Over $\freeFLD{\numC}{X}$ the second factor $f_2$
is \emph{reducible}, 
we have $f_2 = (x - \sqrt{2})\inv (x + \sqrt{2})\inv$.
\end{Example}

\begin{remark}
To find a solution in general (more systematically),
the primary decomposition of ideals can be used,
see for example
\cite[Section~4.8]{Cox2015a}
\ and
\cite[Section~10.8]{Cohn2003a}
.
\end{remark}

\section*{Epilogue}

The presented ``free factorization theory''
is concrete enough to be implemented
in computer algebra software
to be able to apply it.
But some more theoretical questions remain open:
Is the extension of the ``classical'' factorization theory
(in free associative algebras) to the free field ---assuming that
polynomial atoms (and their inverse) remain irreducible---
unique?
Is the free field (in this setting) a ``similarity UFD''?
If so, given an element,
is the sequence of the ranks of the atoms of a factorization
an invariant (modulo permutations)?

\ifJOURNAL
\else
\section*{Acknowledgement}

I thank Daniel Smertnig for the fruitful discussions about
non-commutative factorization and Michael Moßhammer for some
hints on graphs and trees and use this opportunity to
thank Sergey Berezin and Vladimir Vasilchuk
for their support in St.~Petersburg in May 2017.
I am very grateful for the constructive feedback of the anonymous
referees to increase readability,
in particular for the suggested simplification of the definition
of left/right divisibility.
\fi

\ifJOURNAL
\bibliographystyle{plain}
\else
\bibliographystyle{alpha}
\fi
\bibliography{doku}
\addcontentsline{toc}{section}{Bibliography}

\end{document}